\documentclass{article}
\usepackage{hyperref} 
\usepackage[a4paper,margin=1in]{geometry}
\usepackage{amsmath,amssymb,amsthm,mathtools,hyperref}
\usepackage{enumitem}
\usepackage{xcolor}
\usepackage{verbatim}

\usepackage[backend=biber,style=numeric,sorting=nyt]{biblatex}
\newtheorem{theorem}{Theorem}
\newtheorem{example}{Example}
\newtheorem{definition}{Definition}
\newtheorem{proposition}{Proposition}
\newtheorem{lemma}{Lemma}

\newtheorem{remark}{Remark}

\newtheorem{corollary}{Corollary}

\newtheorem{letterthm}{Theorem}

\newcommand{\Sone}{\mathbb S^1}
\newcommand{\C}{\mathbb C}
\newcommand{\R}{\mathbb R}
\newcommand{\N}{\mathbb N}
\newcommand{\A}{\mathcal A}
\newcommand{\Pcal}{\mathcal P}
\newcommand{\Leb}{\mathrm{Leb}}
\newcommand{\DimH}{\dim_{\mathrm H}}
\newcommand{\Z}{\mathbb Z}

\newcommand{\Pq}{P^q}

\newcommand{\Jac}{\operatorname{Jac}}

\newcommand{\ba}{\mathbf a}
\newcommand{\bb}{\mathbf b}
\newcommand{\bc}{\mathbf c}
\newcommand{\be}{\mathbf e}
\newcommand{\bt}{\mathbf t}
\newcommand{\bs}{\mathbf s}
\newcommand{\bD}{\mathbf D}

\newcommand{\cB}{\mathcal{B}}
\newcommand{\cD}{\mathcal{D}}
\newcommand{\cF}{\mathcal{F}}
\newcommand{\cW}{\mathcal{W}}

\newcommand{\norm}[1]{\left\Vert#1\right\Vert}

\DeclareMathOperator\supp{supp}

\title{Geometric Properties of Higher Dimensional Solenoidal Attractors}
\author{
		\scshape
		Daniele Galli\thanks{
			Universität Zürich, Institut für Mathematik, Winterthurerstrasse 190,
			CH-8057 Zürich, Switzerland.
			E-mail: \href{mailto:daniele.galli@math.uzh.ch} {\texttt{daniele.galli@math.uzh.ch}}, \href{mailto:eleonora.passaglia@math.uzh.ch}{\texttt{eleonora.passaglia@math.uzh.ch}},  \href{mailto:andrea.ulliana@math.uzh.ch}{\texttt{andrea.ulliana@math.uzh.ch}}.\\
		\textit{Mathematics Subject Classification (2020):} 37D20, 37C40, 37C70, 28A80, 28A78\\
		\textit{Keywords: SRB measure, Solenoid attractor, Hausdorff dimension, Anosov endomorphism, absolute continuity, affinity dimension}}\\\scshape Eleonora Passaglia\footnotemark[1]\\
           \scshape Andrea Ulliana\footnotemark[1]}
\date{July 2026}

\begin{document}

\maketitle

\begin{abstract}
\noindent We study the skew product systems $T:\Sone\times\R^d\to\Sone\times\R^d$,
\begin{equation*}
    T(x,y)=(\ell x, A y+\phi(x)),
\end{equation*}
where $\ell\geq 2,$ $A \in GL_d(\R)$ with $\rho(A)<1$ and $\phi \in C^r(\Sone, \R^d)$. We allow the fibers to have any dimension and $A$ to be non-conformal. We show that: when $|\det(A)|\ell<1$, for almost every $\phi$, the Hausdorff dimension of the solenoid attractor and the SRB measure equals the affinity dimension; when $|\det(A)|\ell>1$, for almost every $\phi$, the SRB measure is absolutely continuous. We introduce a derivative dispersion condition, replacing the usual transversality condition.

\end{abstract}

\section{Introduction}\label{section: intro}

This paper studies geometric properties, such as the Hausdorff dimension and absolute continuity, of the SRB measures corresponding to the solenoidal attractors of the following family of skew-product systems, with no restrictions on the fiber dimension. For an integer $\ell\geq2$, a matrix $A\in GL_d(\R)$ with spectral radius $\rho(A)<1$ and a function $\phi\in  C^r(\Sone,\R^d)$ for $r\geq 2$, we consider the map 
\[
    T:\Sone\times\R^d\to\Sone\times\R^d,
    \qquad
    T(x,y)=(\ell x, A y+\phi(x)).
\]
$T$ is an Anosov endomorphism, in fact it is a skew product over the expanding map $\tau : x \mapsto \ell x$ and it is definitely uniformly contracting along the fibers.  
This system admits a unique solenoidal attractor $\Lambda^{\phi}$ and a unique SRB measure $\mu_\phi$. We denote by $\Lambda^\phi_x$ and $\mu_{\phi,x}$ their respective intersection with and conditional measure on the vertical fiber $\{x\}\times\R^d$.\newline

Given a smooth dynamical system $f:M\rightarrow M$, the physical measures are those which raise naturally from a statistical study of the system. More precisely, a $f$-invariant measure $\mu$ is physical if
\[
\text{vol}\bigg(\Big\{x\in M : \frac 1N \sum _{i=0}^{N-1}\delta_{f^i(x)} \rightarrow \mu \text{ weakly} \Big\} \bigg)>0.
\]
Classifying such measures for given systems is a central problem in smooth dynamics \cite{palis2000global, tsuj2005physical}.
The SRB measures of a hyperbolic system are the invariant measures which are absolutely continuous along the unstable foliation \cite{young2002srb}, i.e. the most compatible with the volume of $M$, when that is not preserved. For Anosov maps the physical measures coincide with the SRB ones \cite{ruelle1976measure,bowen1975equilibrium}. For uniformly hyperbolic systems, the existence of such measures was proved by Sinai \cite{sinai1972gibbs}, and, in a modern perspective, is well treated by the theory of Transfer Operators and Anistropic Banach Spaces (see section \ref{section: anisotropic spaces}), developed, for example, in \cite{blank-keller-liverani-02,liverani-gouezel-06, gouezel-liverani-08,baladi-tsujii-06, baladi2007anisotropic}. Roughly speaking, the elements of such spaces behave as smooth functions along the unstable directions and as distributions along the stable ones, without encoding any  additional information about this transversal regularity.\newline

The aim of our paper is to investigate the regularity along the stable foliation (i.e. the vertical fibers) of the SRB measure $\mu_\phi$ in our uniform skew-product setting. We will exploit a modern approach of the anisotropic Sobolev  spaces \cite{baladi2007anisotropic,faure2011upper}, whose objects behave like Sobolev functions of variable exponent (possibly negative) along different directions. These spaces, when adapted to our setting and after an appropriate tuning of the exponents, represent a unified set of tools to detect any kind of transversal regularity: the Hausdorff dimension, absolute continuity, and, potentially, the smoothness of the density.\newline

In our setting, a very explicit description of $\Lambda^{\phi}$ and  $\mu_\phi$ is available. For the alphabet $\A=\{1,\dots, \ell\}$, we associate to any infinite word $\ba\in\A^\infty$ a differentiable function $S(\cdot,\ba):\Sone\rightarrow \R^d$, possibly discontinuous at $0$. Then
\[
	\Lambda^{\phi} = \bigcup_{\ba\in\A^\infty} \text{graph}\big(S(\cdot,\ba)\big).
\]
Moreover, defining $\Psi(x,\ba):=(x,S(x,\ba))$ and denoting by $m$ the Lebesgue measure on $\Sone$ and by $\nu$ the Bernoulli measure on $\A^\infty$, we have that $\supp(\mu_\phi)=\Lambda^\phi$ and that
\[
\mu_\phi=\Psi_*(m\times\nu).
\]

The scaling of the system alone imposes general bounds on the regularity of $\mu_\phi$ and $\Lambda^\phi$. For instance the maximal possible Hausdorff dimension of both objects is prescribed by Bowen Formula \cite{bowen1979hausdorff,ruelle1982repellers}, that, in our case can be formulated in terms of the affinity dimension, introduced by Falconer \cite{falconer1988hausdorff}. Let $1>\rho_1\geq\cdots\geq\rho_d>0$ be the moduli of the eigenvalues of $A$, with algebraic multiplicity. For $s\in[k,k+1]$, we set
\[
  \Phi(s):=\rho_1\cdots\rho_k\rho_{k+1}^{\,s-k}.
\]
The affinity dimension $\dim_L(A,\ell)$ is the solution $\omega$ of
$\ell\Phi(s)=1$, capped at $d$.

\begin{proposition}\label{upper bound on dim h}
 	For $A \in GL_d(\R)$ with $\rho(A)<1$ and $\ell \geq 2$, for every $x\in\Sone$,
    \[
    \DimH(\Lambda^\phi_x)\leq \dim_L(A, \ell)
    \qquad \text{ and } \qquad
    \DimH(\Lambda^{\phi})\leq 1+\dim_L(A, \ell).
    \]
\end{proposition}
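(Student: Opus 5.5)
We prove both bounds by explicit covers, following Falconer's argument for self-affine sets. First we describe the fiber. For $x\in\Sone$ and a word $a_1\cdots a_n\in\A^n$, let $x_{a_1\cdots a_n}$ be the corresponding $n$-th preimage of $x$ under $\tau$. Iterating $T$ gives
\[
\Lambda^\phi_x=\bigcup_{a_1\cdots a_n\in\A^n}\Big( A^n\Lambda^\phi_{x_{a_1\cdots a_n}} + c_{a_1\cdots a_n}(x)\Big),
\]
where $c_{a_1\cdots a_n}(x)=\sum_{j=0}^{n-1}A^j\phi(x_{a_1\cdots a_{j+1}})$ is a translation vector. The sets $\Lambda^\phi_{x'}$ are contained in a single ball $B(0,R)$, uniformly in $x'$, because $\rho(A)<1$ and $\phi$ is bounded. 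Hence $\Lambda^\phi_x$ is covered by the $\ell^n$ ellipsoids $A^n B(0,R)+c_{a_1\cdots a_n}(x)$. Their semi-axes are $R\alpha_1(A^n)\ge\dots\ge R\alpha_d(A^n)$, where $\alpha_i$ are the singular values.

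To cover one such ellipsoid efficiently, take $s\in(k,k+1]$. The ellipsoid sits inside a box with sides $2R\alpha_i(A^n)$. We cover this box by cubes of side $\alpha_{k+1}(A^n)$. This needs about $\prod_{i\le k}\alpha_i(A^n)/\alpha_{k+1}(A^n)$ cubes, up to a constant $C_d$. Therefore
\[
\mathcal H^s_{\delta_n}(\Lambda^\phi_x)\lesssim \ell^n\,\alpha_1(A^n)\cdots\alpha_k(A^n)\,\alpha_{k+1}(A^n)^{s-k}=:\ell^n\Phi^s(A^n),
\]
with $\delta_n=\sqrt d\,\alpha_{k+1}(A^n)\to0$. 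The main step is to compare the singular value function with the eigenvalue function $\Phi$. Since $A$ is a single fixed matrix, we have $\alpha_1(A^n)\cdots\alpha_k(A^n)=\|\wedge^kA^n\|$. By Gelfand's formula, $\|\wedge^k A^n\|^{1/n}\to\rho(\wedge^kA)=\rho_1\cdots\rho_k$. This gives $\Phi^s(A^n)^{1/n}\to\Phi(s)$, where we write $\Phi^s$ as $\|\wedge^kA^n\|^{k+1-s}\|\wedge^{k+1}A^n\|^{s-k}$.

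Now take $s>\dim_L(A,\ell)$ with $s<d$. Then $\ell\Phi(s)<1$, since $\ell\Phi$ is strictly decreasing. So $\ell^n\Phi^s(A^n)\le(\ell\Phi(s)+\varepsilon)^n\to0$, which gives $\mathcal H^s(\Lambda^\phi_x)=0$ and hence $\DimH\Lambda^\phi_x\le s$. If $\dim_L=d$, the bound is trivial. Note that the constant in the Gelfand step is independent of $x$.

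For the total space we thicken the same cover. Over the arc $I$ of length $\ell^{-n}$ (or a finer one) around $x$, the graphs $S(\cdot,\ba)$ vary in a Lipschitz way with a uniform constant $L$. This holds because $\|\partial_xS\|\le\sum_j\|A^j\|\ell^{-j-1}\|\phi'\|<\infty$ for the $C^1$ branches. For $y\in I$ the $n$-cylinder pieces of $\Lambda^\phi_y$ stay within the fixed ellipsoid enlarged by $L|I|$. Cover $\Lambda^\phi$ by cubes of side $r=\alpha_{k+1}(A^n)$. Use $\sim 1/r$ subarcs of length $r$; on each, the $n$-pieces lie within $O(r)$ of a fixed ellipsoid. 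So each piece needs $\lesssim\prod_{i\le k}\alpha_i/r^k$ cubes in the fiber direction, times one cube horizontally. The count is then $\lesssim r^{-1}\ell^n\prod_{i\le k}\alpha_i\,r^{-k}$, which gives $\mathcal H^{1+s}_\delta\lesssim \ell^n\Phi^s(A^n)\to0$.

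The main obstacle is this uniform Lipschitz control across the discontinuity at $0$. We handle it by working on the circle minus a point and taking countable stability of the Hausdorff dimension. The resulting bound is $\DimH\Lambda^\phi\le1+\dim_L(A,\ell)$.
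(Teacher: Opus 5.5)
Your proposal is correct and is essentially the paper's own argument. It uses Falconer's cover of the $\ell^n$ ellipsoids $A^nB(0,R)+S_n(x,\ba)$ by cubes of side comparable to $\alpha_{k+1}(A^n)$, the identity $\alpha_1(A^n)\cdots\alpha_j(A^n)=\|\wedge^jA^n\|$ with the spectral radius formula to get the rate $\Phi(s)$, and, for $\Lambda^\phi$, a Lipschitz thickening of the fiber cover over about $\alpha_{k+1}(A^n)^{-1}$ arcs of length $\alpha_{k+1}(A^n)$. Your handling of the jump at $0$ (Lipschitz bounds for $S_n(\cdot,\ba)$ on $[0,1)$ plus countable stability) is a harmless variant of the paper's global bound $d_H(\Lambda^\phi_x,\Lambda^\phi_{x'})\le L\,d_{\Sone}(x,x')$. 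Drop the parenthetical restriction to arcs of length at most $\ell^{-n}$: $\alpha_{k+1}(A^n)$ can exceed $\ell^{-n}$ (e.g.\ $A=\lambda I$ on $\R^2$ with $\ell^{-1}<\lambda<\ell^{-1/2}$), and the Lipschitz estimate holds on any subarc of $[0,1)$ anyway.
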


The regime $\dim_L(A, \ell)<d$, where these estimates are interesting, corresponds to having $\ell|\det A|<1$. This takes the name of \emph{thin} regime, where the area contraction naturally prevents any form of absolute continuity of $\mu_\phi$. The opposite case, where $\ell|\det A|<1$, goes by the name of \emph{fat} regime. 
\newline

In the literature, it has been common to assume $A$ to be conformal or, otherwise, to make the following assumption (hyper-thin regime): $A$ can be decomposed as $A=A_{\text{conf}}\oplus A_{s}$ where $A_{\text{conf}}$ is conformal, it contracts less than $A_{s}$ and it holds $\ell|\det(A_{\text{conf}})|< 1$. In these two situations, the affinity dimension reduces to the more familiar expression of the self-similar dimension, that is, $dim_L(A,\ell)=-\log(\ell)/\log(|\det(A_{\text{conf}})|)$. The exact computation of the Hausdorff dimension of systems with some degree of affine-similarity is an active branch of dimension theory, and historically has been found to be harder than for (conformally) self-similar systems (e.g., in the case of IFS, an equivalent of Moran–Hutchinson theorem \cite{hutchinson1981fractals}, does not hold in the general affine case, as suitable Bedford–McMullen \cite{mcmullen1984hausdorff} carpet provides a counterexample).\newline

It is easy to see that dimension (or regularity) of $\mu_{\phi}$ drops if there is a significant overlap between the graphs of the functions $S(x,\ba)$. They might even all coincide if $T$ admits an invariant curve, which then must coincide with the attractor $\Lambda^{\phi}$. A remarkable phenomenon and a recurring theme in the literature, known since the work of Simon \cite{simon1993hausdorff}, is that, in the absence of overlap, the dimension should commonly not drop from the scaling bound.\newline

Most of the related works capture the geometric independence of the functions $S(x,\ba)$ with similar variations of a \emph{transversality condition}. Essentially, it states that all (or a large proportion of) the graphs of $S(x,\ba)$ are transversal. It is important to note that this condition can only hold if the dimension of the fiber does not exceed the one of the base. In the hyper-thin regime, after a suitable projection of the system, many authors reduce this dimensional constraint to requiring that $\dim(A_{\text{conf}})$ must not exceed the one of the base dynamics.\newline

The study of the thin or the fat case splits into two separate lines of research.
Dealing with the thin regime, most authors focus on the hyper-thin regime only, requiring $\dim(A_{\text{conf}})$ not to exceed the dimension of the base dynamics, and work with the \emph{intrinsic transversality condition}. Bothe \cite{bothe1995hausdorff} formulated the \emph{intrinsic transversality condition}, showing that it ensures no drop of the expected Hausdorff dimension for one dimensional base dynamics. Simon \cite{simon1997hausdorff} verified it for the Smale-Williams solenoid, and Bortolotti and Silva \cite{bortolotti-silva-22} extended the results of Simon to higher dimensional base dynamics. These authors are able to let $A$ to depend on $x$, although, managing to prove the genericity of the intrinsic transversality condition only under certain uniform constraints on $\|A(x)\|$.\newline
The full thin regime has been considered in the remarkable paper by Ren \cite{ren-24}, for analytic $\phi$ and conformal $A$. It has to be noted that the fiber is two-dimensional and the base is one-dimensional. To tackle the higher dimensional fiber, they formulate an alternative to the transversality condition, the $\mathbf{H}$ condition, strongly adapted to the analytic setting. This assumption, joint with the analiticity, guaranties no drop in the Hausdorff dimension. We believe this condition not to be sufficient in the smooth setting.\newline
For the fat regime, the seminal work is due to Tsuji \cite{tsujii-01}, who introduced a quantitative transversality condition which is shown to imply absolute continuity of the SRB measure and to be prevalent in a measure theoretic sense. In \cite{avila-gouezel-tsujii-06} sharp Sobolev estimates are proved, and in \cite{bocker-bortolotti-castro-24} the higher dimensional base case is treated. All of these works require the fibers to have at most the same dimension as the base.\newline

Our work, steaming from the fat case line, especially from \cite{tsujii-01} and \cite{avila-gouezel-tsujii-06}, aims at unified treatment of the thin and fat regime, removing any dimensional or conformality constraint, for $C^k$ systems.\newline
The main obstacle is represented by the fact that, both the transversality condition and the $\mathbf{H}$ condition, do not fit our setting. We introduce the \emph{derivative dispersion condition} (or \emph{$\bD$ condition} for short), defined precisely in section \ref{section: setting}. It is a quantitative condition which roughly requires that, for any $x$, only a small amount of the derivatives $\frac d{dx}S(x,\ba)$ can lie near any hyperplane of $\mathbb{R}^d$.\newline

\begin{letterthm}
\label{thm:A}
    Given $A \in GL_d(\R)$ with $\rho(A)<1$ and $\ell \geq 2$, if $|\det(A)|<1/\ell$ and if $\phi\in C^{r}(\Sone,\mathbb{R}^d)$ with $r\ge (d+3)/2$ satisfies the derivative dispersion condition, then 
    $\DimH(\mu_{\phi})=1+\dim_L(A, \ell).$
\end{letterthm}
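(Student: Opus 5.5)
The upper bound $\DimH(\mu_\phi)\le \DimH(\Lambda^\phi)\le 1+\dim_L(A,\ell)$ is given by Proposition \ref{upper bound on dim h}. So the whole content of Theorem \ref{thm:A} is the lower bound. I will prove it through a generalized energy estimate adapted to the non-conformal scaling of $A$.

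Fix $s<\dim_L(A,\ell)$ with $k<s\le k+1$. Following Falconer, I will use the singular value function: for a vector $v\in\R^d$ written in the coordinates of the (real) Jordan decomposition of $A$, I define a kernel $\psi^s(v)$ that behaves like the affine analogue of $|v|^{-s}$, namely the inverse of $\Phi$ evaluated on the "ellipse" that $v$ sits in. Equivalently, I can work with an anisotropic Sobolev norm on the fibres whose weight in frequency $\xi$ mimics $\psi^s$. The goal is the finiteness of
\[
I_s:=\int_{\Sone}\int\!\!\int \psi^s\big(S(x,\ba)-S(x,\bb)\big)\,d\nu(\ba)\,d\nu(\bb)\,dm(x).
\]
This gives $\DimH(\mu_{\phi,x})\ge s$ for a.e.\ $x$. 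Since $\mu_\phi=\Psi_*(m\times\nu)$, a Marstrand-type slicing argument then yields $\DimH(\mu_\phi)\ge 1+s$. If needed, I will smooth the kernel and use a local version of the energy on the two-dimensional pieces of graphs.

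To bound $I_s$, I split the pair $(\ba,\bb)$ according to the length $n$ of their common prefix. If $\ba$ and $\bb$ agree in the first $n$ letters, then $S(x,\ba)-S(x,\bb)=A^n\big(S(x',\sigma^n\ba)-S(x',\sigma^n\bb)\big)$, with $x'$ ranging over the $\ell^n$ preimages of $x$. The inner differences have first letters distinct. The $\nu\times\nu$ mass of such pairs is $\sim\ell^{-n}$. Since $\psi^s(A^n v)\lesssim \Phi(s)^{-n}\psi^s(v)$ up to subexponential factors (non-diagonalizable Jordan blocks cost only polynomial terms), the terms contribute roughly $\sum_n(\ell\Phi(s))^{-n}\cdot J_s$, where
\[
J_s=\sup\int\!\!\int_{a_1\neq b_1}\int\psi^s\big(S(x,\ba)-S(x,\bb)\big)\,dx\,d\nu\,d\nu .
\]
This sum converges because $s<\dim_L$ means $\ell\Phi(s)>1$. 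So everything reduces to finiteness of the "first-generation" energy $J_s$.

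The main obstacle is bounding $J_s$. Here the derivative dispersion condition must replace transversality. The difference $D(x)=S(x,\ba)-S(x,\bb)$ is a $C^{r-1}$ curve in $\R^d$. One cannot hope that it is transversal to the origin in all $d$ directions, since a curve is one-dimensional. Instead, I would pass to Fourier: write $\psi^s$ via $\widehat{\psi^s}(\xi)$ and estimate $\int e^{i\langle\xi,D(x)\rangle}dx$ by integration by parts (non-stationary phase). This decays whenever $\langle\xi, D'(x)\rangle$ is bounded below, that is, whenever $D'(x)$ is not close to the hyperplane $\xi^\perp$. The $\bD$ condition says that for each $x$ only a small $\nu\times\nu$-proportion of pairs have derivative difference near any given hyperplane. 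The bad pairs I handle by the same prefix-recursion, gaining a factor strictly smaller than the expansion $\ell\Phi(s)$. The good pairs get decay $|\xi|^{-m}$ after $m$ integrations by parts. The regularity $r\ge(d+3)/2$ should be exactly what makes $\sum_\xi |\xi|^{-2(r-1)}\widehat{\psi^s}(\xi)$ converge in a Sobolev-type count in dimension $d$. I expect the quantitative balancing of the bad-pair proportion against the anisotropic weight $\psi^s$ to be the delicate point. I would first attempt it with $A$ diagonal, then absorb Jordan blocks as $n^{d}$ losses.
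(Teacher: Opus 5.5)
Your upper bound and your final slicing step match the paper's. Your energy $I_s$ can be realized with the product kernel $\prod_i|\pi'_i v|^{-\beta_i}$ along the $A$-invariant splitting, with $0<\beta_i<d_i$ and $\beta_i=d_i+2s_i$; it is then exactly $c_{A,\bs}\|\mu_\phi\|^2$ for the paper's norm. The kernel must have this split form, since an isotropic $|v|^{-s}$ fails $\psi(A^nv)\lesssim\Phi(s)^{-n}\psi(v)$ along strongly contracted directions.

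The genuine gap is the step you leave open, the finiteness of $J_s$, and it carries the whole difficulty.
\begin{itemize}
\item The prefix reduction $I_s\lesssim\sum_n(\ell\Phi(s))^{-n}J_s$ is correct but buys nothing, because trivially $J_s\le I_s$. Proving $J_s<\infty$ is as hard as the original claim.
\item Handling bad pairs by ``the same prefix-recursion'' cannot work as stated. Pairs in $J_s$ have $a_1\neq b_1$, so they share no prefix. After $q$ letters their tails sit over different base points $\tau^{-q}_{\ba}(x)\neq\tau^{-q}_{\bb}(x)$, so $S(x,\ba)-S(x,\bb)$ does not factor as $A^q$ times a difference of the same kind.
\item Knowing that bad pairs have proportion $\lesssim M(q)\ell^{-q}$ does not help by itself. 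The contribution of a small set of pairs to a singular energy is not controlled by its mass.
\item Any recursive bound of the form $I\le\theta I+C$ is empty unless you run it on an object known a priori to be finite.
\end{itemize}

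The missing idea is the decoupling in Proposition \ref{prop:LY}.
\begin{itemize}
\item On the Fourier side, write the localized transform of $P^qu$ as $\sum_{\ba\in\A^q}\widehat u_{\ba,\bc}(\xi,\eta)$, and split $\A^q=B^q_\bc(\xi,\eta)\cup G^q_\bc(\xi,\eta)$ frequency by frequency.
\item Apply Cauchy--Schwarz to the bad part: $\bigl|\sum_{\ba\in B^q_\bc}\widehat u_{\ba,\bc}\bigr|^2\le M(q)\sum_{\ba\in B^q_\bc}|\widehat u_{\ba,\bc}|^2$. This removes the cross terms between different tails, and each remaining square is a rescaled copy of the full norm.
\item The changes of variables $x\mapsto\tau^{-q}_{\ba}(x)$ and $\eta\mapsto(A^q)^T\eta$ then produce the factor $B_\bs q^kM(q)\bigl(\ell\prod_i|\lambda_i|^{d_i+2s_i}\bigr)^{-q}$. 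It is $<1$ for large $q$ precisely because of $\bD$ and $\ell\prod_i|\lambda_i|^{\beta_i}>1$.
\item The good branches, whose pulled-back covectors lie in the cone $C^*$, are bounded by $C(q)\|u\|_\rho^2$ via Lemma \ref{Fdecay}.
\item The inequality is iterated on the smooth densities $P^nh_0$, for which $\|P^nh_0\|_\rho\le C\|h_0\|_\rho$ by \eqref{eq:LY1}. This gives $\sup_n\|P^nh_0\|<\infty$, and Fatou transfers the bound to $\mu_\phi$.
\end{itemize}

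Your non-stationary-phase idea could replace the geometric norm for the good part. You would integrate by parts in $x$ the pair correlations of such approximants, using that the phase derivative is bounded below by a $q$-dependent multiple of $|\eta|$. But without the Cauchy--Schwarz decoupling of the bad branches and an approximation scheme, the proposal does not contain the central estimate.
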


\begin{letterthm}
\label{thm:B}
    Given $A \in GL_d(\R)$ with $\rho(A)<1$ and $\ell \geq 2$ if $|\det(A)|> 1/ \ell$ and if $\phi\in C^{r}(\Sone,\mathbb{R}^d)$ with $r\ge(d+3)/2$ satisfies the derivative dispersion condition, then $\mu_{\phi}$ is absolutely continuous with respect to the Lebesgue measure on $\Sone \times \R^d.$
\end{letterthm}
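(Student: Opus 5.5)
The statement to prove is Theorem~\ref{thm:B}. I would show $\mu_\phi\in L^2$, which gives absolute continuity. Following Tsujii \cite{tsujii-01} and Avila--Gou\"ezel--Tsujii \cite{avila-gouezel-tsujii-06}, I would write $\mu_\phi$ as a weak limit of push-forwards and control an $L^2$-type quantity that decays under the dynamics. Concretely, let $\mu_0$ be Lebesgue measure on $\Sone$ times a smooth compactly supported bump in $\R^d$, and set $\mu_n:=\frac1{\ell^n}\sum$ over the $\ell^n$ inverse branches of $T^n$ of the transported densities. Then $\mu_n\to\mu_\phi$ weakly.

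Rather than iterate the transfer operator directly, I would use the representation $\mu_\phi=\Psi_*(m\times\nu)$. For $r>0$ consider
\[
I_r:=\frac1{r^{1+d}}\,(m\times\nu)^{2}\bigl\{(x,\ba,x',\bb): |\Psi(x,\ba)-\Psi(x',\bb)|<r\bigr\}.
\]
A uniform bound $\liminf_{r\to0} I_r<\infty$ forces the density of $\mu_\phi$ to lie in $L^2$.

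The main step is a recursive inequality on $I_r$. I would split pairs $(\ba,\bb)$ according to the length $n$ of their common prefix. Words sharing the first $n$ symbols are images under $T^n$ of a single branch. Hence their contribution scales like $\ell^{-n}$ (the probability of the common prefix) times $|\det A|^{-n}$ (the volume Jacobian of $T^n$ in the fiber) times the corresponding quantity at scale about $\|A^{-n}\|r$ and a base scale $\ell^n r$. I would choose a fixed large $n$ and work in anisotropic balls adapted to the singular values of $A^n$, rather than Euclidean balls, so that the Jacobian factor is exactly $|\det A|^{-n}$. Pairs that first differ at the next symbol are then handled by the derivative dispersion condition. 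Since $\partial_x S(x,\ba)$ and $\partial_x S(x,\bb)$ differ, the difference $g(x)=S(x,\ba)-S(x,\bb)$ has derivative avoiding a small cone, except for a small $\nu^2$-proportion $\varepsilon$ of pairs. For a curve $x\mapsto g(x)$ in $\R^d$ with derivative of size $\gtrsim1$ in a generic direction, the measure of $\{x:|g(x)|<r\}$ is $\lesssim r$, not $r^{1+d}$. The gain comes from averaging over $\ba,\bb$ together with the density of $\nu$-distributed derivatives. This is the step where the $\bD$ condition must be quantitative: the distribution of $\partial_xS(x,\cdot)$ under $\nu$ should give each hyperplane neighbourhood mass $O(\delta)$. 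Combined with $C^r$ smoothness, $r\ge(d+3)/2$, this yields an $L^2$/Fourier bound. Equivalently, I would estimate $\int|\hat\mu_{\phi,x}(\xi)|^2|\xi|^{0}\,d\xi$ via the oscillatory integral $\int e^{i\xi\cdot g(x)}dx$ after $\lceil (d+1)/2\rceil$ integrations by parts in the transversal directions.

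The resulting inequality takes the form
\[
I_r\le C_n+\bigl(\ell|\det A|\bigr)^{-n}(1+\varepsilon C)\,I_{r'}.
\]
Since $\ell|\det A|>1$, fixing $n$ large and then $\varepsilon$ small closes the recursion and gives $\sup_r I_r<\infty$.

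I expect the main obstacle to be the dispersion estimate for non-conformal $A$. The rescaling by $A^{-n}$ distorts directions, so hyperplane-avoidance has to be used in the coordinates stretched by the singular vectors of $A^{n}$. I would first prove a lemma stating that the $\bD$ condition is preserved, with controlled constants, under this linear change of variables, using compactness of the Grassmannian. Only after that would I run the recursion above.
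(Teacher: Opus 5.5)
\textbf{There is a genuine gap: the cross-term step of your recursion fails for $d\ge 2$.}

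In Tsujii's setting ($d=1$), non-diagonal, non-bad pairs are harmless. A single transversal pair already gives $\Leb\{x:|S(x,\ba)-S(x,\bb)|<r\}\lesssim r=r^d$. For $d\ge 2$ the per-pair bound is still only $\lesssim r$. Its contribution to $I_r$ is therefore of order $r^{-(1+d)}\cdot r\cdot r=r^{1-d}$, which is unbounded as $r\to0$. So the term $C_n$ in $I_r\le C_n+(\ell|\det A|)^{-n}(1+\varepsilon C)I_{r'}$ is not justified.

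To recover the missing $r^{d-1}$ you would have to show that the \emph{positions} $S(x,\ba)-S(x,\bb)$ of the good pairs are spread transversally to $\partial_x g$. That is essentially the regularity you are trying to prove. Dispersion of the \emph{derivatives} does not give it, and ``the gain comes from averaging'' is exactly the missing argument.

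The underlying reason is that the $\bD$ condition is not a pairwise transversality condition. The piece of the measure carried by a branch $\ba$ has its Fourier mass near the hyperplane $\{(\xi,\eta):\xi+\eta\cdot S'(x,\ba)=0\}$ in $\Z\times\R^d$. Two such hyperplanes with different tangents meet in a $(d-1)$-dimensional subspace. For $d=1$ this is only the origin, which is why pairwise transversality suffices there. For $d\ge2$, distinct derivatives do not make cross terms negligible.

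\textbf{How the paper avoids pairs.} It never estimates pairs. It applies the argument to the smooth iterates $u=P^nh_0$, using the norm of $L^2(\Sone,\dot H^{\mathbf 0}_A)$, which is just the $L^2$ norm. After localizing with $\chi_{\bc\ba}$, it fixes a frequency $(\xi,\eta)$ and, for each $\bc$, splits $\A^q$ into the bad set $B^q_\bc(\xi,\eta)$, of size at most $M(q)$, and its complement.
\begin{itemize}
\item \emph{Bad branches.} These are handled by Cauchy--Schwarz. The change of variables $\zeta=(A^q)^T\eta$ has Jacobian exactly $|\det A|^{-q}$, which gives the factor $M(q)(\ell|\det A|)^{-q}$. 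So non-conformality costs nothing, and your Grassmannian lemma and anisotropic balls are unnecessary.
\item \emph{Good branches.} Here $(DT^q)^*(\xi,\eta)\in C^*$. Each coefficient $\widehat{P^q_\ba(\chi_{\bc\ba}u)}(\xi,\eta)$ is then bounded \emph{individually} by $C(q)\|u\|_\rho(1+\xi^2+|\eta|^2)^{-\rho/2}$ (Lemma~\ref{Fdecay}). The geometric norm $\|P^nh_0\|_\rho$ is uniformly bounded by \eqref{eq:LY1}.
\end{itemize}
Squaring this individual decay is what makes $\rho>(d+1)/2$ suffice. In your pair formulation $\int e^{i\eta\cdot g(x)}\,dx$ the decay enters unsquared, so $\lceil(d+1)/2\rceil$ integrations by parts would not even give an integrable bound on $\R^d$.

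The outcome is $\|P^qu\|^2\le 36M(q)(\ell|\det A|)^{-q}\|u\|^2+C(q)\|u\|_\rho^2$. Note that the bad branches enter through the count $M(q)$, not through a freely small proportion $\varepsilon$. What is needed is $36M(q)<(\ell|\det A|)^q$, which $\bD$ together with $\ell|\det A|>1$ provides. Then $\sup_n\|P^nh_0\|<\infty$, Fatou's lemma, and part 2 of Theorem~\ref{thm:3} give absolute continuity.
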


The derivative dispersion condition is generic, in fact it holds for almost every $\phi \in C^r(\Sone, \R^d)$ in the following sense. We say that $F \subset C^r(\Sone, \R^d)$ has full measure if there exist finitely many smooth functions $\psi_1,...,\psi_m \in C^{\infty}(\Sone, \R^d)$ such that, for every $\phi \in C^\infty(\Sone, \R^d)$, 
\begin{equation*}
    \text{Leb} \left \{ (t_1,...,t_m) \in \R^m : \phi + \sum_{i=1}^m t_i \psi_i \notin F \right\} =0.
\end{equation*}

\begin{letterthm}\label{thm:C}
    Given $A \in GL_d(\R)$ with $\rho(A)<1$ and $\ell \geq 2$, we denote by $\mathcal{F} \subset C^r(\Sone, \R^d)$ the set of $\phi$ that satisfies the derivative dispersion condition. Then the interior of $\mathcal{F}$ has full measure in $C^r(\Sone, \R^d).$
\end{letterthm}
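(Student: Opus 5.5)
We need to prove Theorem C: the derivative dispersion condition holds on an open set of full measure. The $\bD$ condition is only defined later, so I will work with a plausible form of it. The derivatives are
\[
\tfrac{d}{dx}S(x,\ba)=\sum_{n\ge1}\ell^{-n}A^{n-1}\phi'(x_n(x,\ba)),
\]
where $x_n(x,\ba)$ is the $n$-th preimage of $x$ selected by $\ba$. The condition should be a quantitative statement at some finite depth $q$: for every $x$ and every unit vector $v\in\R^d$, the proportion of words $\ba\in\A^q$ for which $|\langle v,\tfrac{d}{dx}S(x,\ba)\rangle|$ is smaller than some $\epsilon$ is at most some $\theta$, with $\theta$ small compared with the relevant scaling (e.g. $\theta<\ell^{-?}\Phi$-type bound).

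Openness comes first, and it is easy. At a fixed finite depth $q$ the condition is strict: there are finitely many words, and $(x,v)$ ranges over the compact set $\Sone\times S^{d-1}$. The tail $\sum_{n>q}$ is bounded by $C\|\phi\|_{C^1}\,\|A^{q}\|\ell^{-q}$. Hence the truncated derivatives depend continuously on $\phi$ in $C^1$, uniformly in $x$, and a small $C^r$ perturbation preserves the strict inequalities once the margin is slightly shrunk. So $\mathcal F$, or at least a depth-$q$ version of it that implies the $\bD$ condition, is open.

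For full measure I follow Tsujii's prevalence argument \cite{tsujii-01}. I choose finitely many $\psi_i\in C^\infty(\Sone,\R^d)$, namely bump functions times the basis vectors $e_j$, with supports on a fine partition of $\Sone$ at scale smaller than $\ell^{-q}$. The key steps are as follows.

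First, the truncated derivative map is affine in the parameters:
\[
\bt\mapsto D_q(x,\ba;\phi+\textstyle\sum t_i\psi_i).
\]
Second, for two distinct words $\ba\ne\bb$ of length $q$, with $n$ the first index where the preimages $x_n(x,\ba)\neq x_n(x,\bb)$ differ, the bumps localised near $x_n(x,\ba)$ move $D_q(x,\ba)$ by the invertible matrix $\ell^{-n}A^{n-1}$ while not moving $D_q(x,\bb)$. This is where the fine support scale is used. Consequently the joint map $\bt\mapsto(\langle v,D_q(x,\ba_j)\rangle)_j$ over a family of words with distinct prefixes has Jacobian bounded below by $c\,\ell^{-dq}|\det A|^{q}$ in suitable directions.

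Third, I get a transversality-type probability estimate. For fixed $(x,v)$ and a fixed set of $k$ words whose branch points are pairwise separated, the event that all $k$ inner products are less than $\epsilon$ has parameter measure at most $(C\epsilon\,\ell^{q}\|A^{-1}\|^{q})^k$. I then sum over $\epsilon$-nets in $x$ and $v$, of sizes $\epsilon^{-1}$ and $\epsilon^{-(d-1)}$, using the Lipschitz dependence on $(x,v)$, and over choices of words. A Chebyshev/combinatorial argument then shows that the set of $\bt$ in a bounded box where the condition fails at depth $q$ with threshold $\epsilon$ has measure tending to $0$. Here I choose $\epsilon=\epsilon(q)$ exponentially small and $k$ large enough to beat the net cardinality $\epsilon^{-d}$.

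Finally, letting $q\to\infty$ gives full measure on each box, and a countable union of boxes gives all of $\R^m$. Combined with openness, the interior of $\mathcal F$ has full measure.

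The main obstacle is step three. Words sharing long common prefixes have strongly correlated derivatives, so independence only holds after grouping words by their branching tree. I would handle this as in \cite{tsujii-01}: discard a proportion $\theta$ of words with close preimage points, which is controlled since at each level only boundedly many preimages are within distance $\ell^{-q}$. I would also need the allowed exceptional proportion in the $\bD$ condition to exceed this combinatorial loss.
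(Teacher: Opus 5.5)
\textbf{Verdict: genuine gap.} Your two-part plan (openness, then a Tsujii-type prevalence argument) mirrors the paper, which deduces Theorem~\ref{thm:C} from Theorem~\ref{genericity} and Remark~\ref{M(q) open condition}. However, the prevalence step does not go through.

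\textbf{The probe family depends on $q$.} Your probes are bumps at a scale finer than $\ell^{-q}$, so their number (of order $d\ell^{q}$) and their $C^r$ norms grow with $q$. ``Letting $q\to\infty$'' then needs infinitely many parameters. Full measure asks for one finite family $\psi_1,\dots,\psi_m$ along which the bad events are null for all large $q$ at once; this is what makes Borel--Cantelli in $q$ available.

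\textbf{The slab width is not yours to choose.} The slabs defining $B^q_{\bc}(\xi,\eta)$ have arbitrary offset and the prescribed width $w_q(\be)=2\alpha\ell^{-q}\|(A^q)^T\be\|$. What must be shown is $M(q)=e^{o(q)}$, not a bound $\theta\ell^q$. At this width your estimate gives nothing. A perturbation acting at depth $n$ moves $\be\cdot S'(x,\ba)$ only by order $\ell^{-n}\|(A^{n-1})^T\be\|$. So for two words whose preimages first separate at a depth $n$ close to $q$, the area-formula bound on the probability of sharing a slab is no better than a constant. Correspondingly, your bound $(C\epsilon\ell^q\|A^{-1}\|^q)^k$, summed over the $\binom{\ell^q}{k}$ choices of $k$ words, closes only for $\epsilon\lesssim\ell^{-2q}\|A^{-1}\|^{-q}$. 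That is exponentially thinner than $w_q(\be)\geq2\alpha\ell^{-q}\|A^{-1}\|^{-q}$, and the hypothesis $M(q)\geq e^{\beta q}$ is never used.

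\textbf{The missing idea is the prefix renormalization of Lemma~\ref{lemma:genericity1}.} Since $S'(x,\bb\ba)=S_m'(x,\bb)+\ell^{-m}A^{m}S'(\tau_{\bb}^{-m}(x),\ba)$ for $m=|\bb|$, the dynamical slab at depth $m+q$ in direction $\be$ pulls back exactly to the dynamical slab at depth $q$ in direction $(A^m)^T\be/\|(A^m)^T\be\|$. This is why the width involves $\|(A^q)^T\eta\|$.
\begin{itemize}
\item The pigeonhole on the branching counts $\nu(j)$ turns $e^{\beta\tilde q}$ words in one slab into a common prefix plus $d_0$ continuations of length $q\geq\beta\tilde q/(2\log\ell)$. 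These continuations have pairwise distinct $n_0$-prefixes and still lie in one slab.
\item Since $n_0$ depends only on $\beta$, a single finite family at scale $n_0$ (Proposition~\ref{generic families}) gives $\Jac(G_{x_{\bc},\sigma})>1/2$ uniformly in $q$. Your ``discard words with close preimages'' step corresponds only to that proposition's extraction of a $\prec$-independent subset.
\item In the union bound over $\sigma$, $\bc$ and the net of slabs $\mathcal F_q$ of Lemma~\ref{lemma:genericity2}, the $\ell^{qN_0}$ choices of words cancel exactly against the factor $\ell^{-qN_0}$ from the slab widths. The decay is supplied by $\|(A^q)^T\|^{N_0}$.
\end{itemize}

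\textbf{Your openness step also takes the essential point for granted.} Stability of strict inequalities at one fixed depth under small $C^r$ perturbations is fine; the paper records this fixed-$q$ semicontinuity of $M(q)$ in Remark~\ref{M(q) open condition}. But $\bD$ is asymptotic: $\limsup_q\frac1q\log M(q)<\beta$ for every $\beta>0$. A bound at one depth controls it only if it propagates to all larger depths, e.g.\ via a submultiplicativity estimate $M(q_1+q_2)\leq C\,M(q_1)M(q_2)$, which follows from the same prefix renormalization. You assert the existence of ``a depth-$q$ version that implies the $\bD$ condition'' without supplying this. Even with it, you would only get openness of each set $\{\limsup_q\frac1q\log M(q)<\beta\}$. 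Since $\mathcal F$ is the intersection of these sets over all $\beta>0$, openness of $\mathcal F$ itself does not follow.
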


In section \ref{section: setting} we discuss more general versions of Theorem \ref{thm:A} and \ref{thm:B} involving a relaxation of the $\bD$ condition. Indeed, for any $\beta>0$, we formulate the $\bD_\beta$ condition, admitting an higher number of exceptions than in the $\bD$ condition. This allows to prove non-sharp, but relevant results, such as the following one.

\begin{example}\label{example times 3}
    The Smale-Williams type map $T:\Sone\times \R^2$ given by $T(x,y,z) = \big(3x, \lambda y + \cos(2\pi x), \lambda z + \sin(2\pi x)\big)$, has absolutely continuous SRB measure, and positive Lebesgue measure attractor for all $\sqrt{2/3}<\lambda<1$.
\end{example}

\paragraph{Structure of the Paper} In section \ref{section: setting} we fully describe our setting and the tools required for our discussion, among them we formulate the derivative dispersion condition and we define our adapted anisotropic Sobolev norms. We also state Theorems \ref{thm:1}, \ref{genericity} and \ref{thm:3} which describe our results in full generality. In section \ref{section: proof of intro} we prove the statements of the the introduction, assuming the ones of of section \ref{section: setting}. Section \ref{section: energy and sobolev} recalls elements of potential theory, describes the relation between Sobolev Spaces and Hausdorff Dimension and proves Theorem \ref{thm:3}. In section \ref{section: anisotropic spaces} we recall preliminary notions about transfer operators and classical anisotropic spaces. In section \ref{section: proof of theorem 1} we prove a Lasota-Yorke type inequality involving the anisotropic Sobolev norm and the classical ones, obtaining Theorem \ref{thm:1} as a corollary. Section \ref{section: proof of genericity} is devoted to the proof of Theorem \ref{genericity}, i.e. the genericity of the derivative dispersion condition.

\section{Setting and Results}\label{section: setting}

We recall that, throughout the paper, we will consider the skew product
\[
    T(x,y)
    =
    \bigl(\tau(x),Ay+\phi(x)\bigr),
    \qquad
    \tau(x)=\ell x \pmod 1,
\]
where \(\ell\geq2\), \(A\in GL_d(\mathbb R)\) satisfies
\(\rho(A)<1\), and
\(\phi\in C^r(\mathbb S^1,\mathbb R^d)\).

\subsection{Symbolic Setting}
\paragraph{Symbolic notation.}

Let $\A:=\{1,\dots,\ell\},$ and
for $n\geq 1$, let $\A^n$ be the set of words of length $n$ over the alphabet
$\A$. If $\ba\in\A^n$, we write $\ba=(a_1,\dots,a_n).$
The space of infinite words is denoted by $\A^\infty:=\A^{\mathbb N}.$\\
Let $\Pcal$ be the partition of $\Sone$ into the intervals $\Pcal(k):=[(k-1)/\ell,k/\ell)$ with $1\leq k\leq \ell.$ For a word $\ba=(a_1,\dots,a_n)\in\A^n$ we have
\[
    \Pcal(\ba)=\bigcap_{j=0}^{n-1}\tau^{-j}(\Pcal(a_{n-j})).
\]
For every $x\in\Sone$ and every $\ba\in\A^n$, there is a unique point
$\ba(x)\in \Pcal(\ba)$ such that $\tau^n(\ba(x))=x.$
For $1\leq q\leq n$, we denote by $[\ba]_q=(a_1,\dots,a_q)$
the prefix of length $q$ of $\ba$.

\paragraph{Graph functions}

Fix a word $\ba=(a_1,\dots,a_n)\in\A^n$. For $1\leq j\leq n$, we define
\[
    \tau_{\ba}^{-j}:\Sone\to\Pcal([\ba]_j)
\]
as the inverse branch of $\tau^j$ selected by the prefix $[\ba]_j$, so $\tau_{\ba}^{-j}(x)$ is the unique point in
$\Pcal([\ba]_j)$ such that
\[
    \tau^j\bigl(\tau_{\ba}^{-j}(x)\bigr)=x.
\]
For $\ba\in\A^n$, let
\[
    S_n(x,\ba)
    :=
    \sum_{j=1}^n
    A^{j-1}
    \phi\bigl(\tau_{\ba}^{-j}(x)\bigr).
\]

The image of the horizontal segment $\Pcal(\ba)\times\{0\}$ under $T^n$ is the
graph of $S_n(\cdot,\ba)$:
\[
    T^n\bigl(\tau_{\ba}^{-n}(x),0\bigr)
    =
    \bigl(x,S_n(x,\ba)\bigr).
\]

For an infinite word $\ba\in\A^\infty$, we introduce
\[
    S(x,\ba)
    :=
    \lim_{n\to\infty}S_n(x,[\ba]_n)  = 
    \sum_{j=1}^\infty
    A^{j-1}
    \phi\bigl(\tau_{\ba}^{-j}(x)\bigr).
\]
Since \(\rho(A)<1\), the series converges uniformly, together with its
derivatives up to order \(r\).

\paragraph{Bounds on Derivatives}
 
Let $\bc\in\A^p$. We denote by $\Pcal_*(\bc)$ the union of the interval $\Pcal(\bc)$
and the two adjacent intervals of the same generation. The function $S(\cdot,\ba)$ may fail to be globally continuous on $\Pcal_*(\bc)$ if
the interval crosses the discontinuity of the inverse branches. However, the
restriction of $S(\cdot,\ba)$ to $\Pcal(\bc)$ has a natural $C^r$ extension to
$\Pcal_*(\bc)$. We denote it by
\[
    S_{\bc}(\cdot,\ba):\Pcal_*(\bc)\to\R^d.
\]
Let us fix a positive number $\kappa$ and consider the mapping $T$
for a function $\phi$ in
\[
    \mathcal{U}_\kappa=
    \left\{
    \phi\in C^r(\Sone,\R^d);
    \ \|\phi\|_{C^r}
    :=
    \max_{0\leq k\leq r}
    \sup_{x\in \Sone}
    \norm{\frac{d^k}{dx^k}\phi(x)}
    \leq \kappa
    \right\}.
\]
Since $\rho(A)<1$, we can define the finite constant
$$\alpha=\kappa\displaystyle\sum_{i=0}^{\infty}\norm{A^i}$$ and $$D=\Sone\times\{y\in\R^d:\ \norm{y}\leq\alpha\}.$$

Then $T^N(D)\subseteq D$ and we can write the attractor of $T$ as
$$\Lambda^\phi=\bigcap_{n=0}^\infty T^{nN}(D).$$ Moreover,
for any word $\ba$ of finite or infinite length, a straightforward computation shows that
\[
    \sup_{x\in P^*(\bc)}
    \max_{0\leq k\leq r}
    \ell^k
\norm{\frac{d^{k}}{dx^{k}}S_\bc(x,\ba)}
    \leq \alpha.
\]

\subsection{Derivative Dispersion and Concentration Multiplicity}

The purpose of this section is to quantify the  concentration
of the derivatives of the graph functions at a natural dynamical
scale. For a fixed inverse cylinder, we count how many
$q$-branches may have their derivatives contained in the same thin
slab. The resulting quantity will be called the \emph{concentration
multiplicity}. The \emph{derivative dispersion condition} requires this
multiplicity to grow subexponentially.

\begin{definition}
For $\bc\in\A^p$ and $(\xi,\eta)\in\Z\times\R^d$ we define the set
\begin{equation*}
B_\bc^q(\xi,\eta)
:=
\left\{
 \ba\in\A^q:
 \exists x\in \Pcal_*(\bc)\ \text{such that}\
 \left|\xi+\eta\cdot S'_\bc(x,\ba)\right|
 < 2\alpha{\ell}^{-q} \norm{(A^q)^T\eta}
\right\},
\end{equation*}
where $A^T$ denotes the transpose of the matrix $A.$
Moreover we set
\begin{equation*}
G_\bc^q(\xi,\eta):=\A^q\setminus B^q_\bc(\xi,\eta).
\end{equation*}
\end{definition}

\begin{definition}
For \(p,q\geq1\), we define the finite scale concentration multiplicity as
\[
    M(p,q)
:=
\max_{c\in\A^p}
\sup_{(\xi,\eta)}
\#
B^q_\bc(\xi,\eta).
\]
\end{definition}

\begin{remark} Equivalently, defining the dynamical $q$-scale slab orthogonal to $\be\in\mathbb{S}^{d-1}$, as 
\begin{equation*}
    E_q(\be,r):=\left\{y \in \R^d : |\be \cdot y-r| < 2 \alpha \ell^{-q}\norm{(A^q)^T\be}\right\},
\end{equation*}
we get
\begin{equation*}
M(p,q)
:=
\max_{\bc\in\A^p}
\sup_{\be\in \mathbb{S}^{d-1}}
\sup_{r\in\R}
\#
\left\{
 \ba\in\A^q:
 \exists x\in \mathcal{P}^*(\bc)\ \text{s.t.}\
 S'_\bc(x,\ba) \in E_q(\be,r) \right\}
\end{equation*}
\end{remark}

The intervals \(\Pcal_*(\bc)\) shrink as \(p\) increases, so the finite scale concentration multiplicity \(M(p,q)\) is non-increasing in \(p\). 

\begin{definition}[Concentration Multiplicity]
We define the
concentration multiplicity at time \(q\) by
\[
    M(q)
    :=
    \lim_{p\to\infty}M(p,q)
    =
    \inf_{p\geq1}M(p,q).
\]
\end{definition}
We are finally ready to introduce the derivative dispersion condition.
\begin{definition}[Derivative dispersion]
We say that $\phi$ satisfies the derivative dispersion condition with exponent $\beta>0$, denoted by $\bD_\beta,$ if
\begin{equation*}
\limsup_{q\to\infty}
\frac{1}{q}\log M(q)
<
\beta.
\end{equation*}
We say that $\phi$ satisfies the derivative dispersion condition $\bD$ if it fulfills $\bD_\beta$ for any $\beta>0.$
\end{definition}

\subsection{The anisotropic Sobolev norm}
In this section we introduce a family of anisotropic Sobolev norms based on the spectral decomposition of the matrix $A.$
Let
$
1>|\lambda_1|>\cdots>|\lambda_t|>0
$
be the distinct moduli of the eigenvalues of $A$ ordered in decreasing order. We write \begin{equation*}
\mathbb R^d=E_1\oplus\cdots\oplus E_t
\end{equation*}
for the real $A^T-$invariant spectral decomposition, where each $E_i$ is either the generalized eigenspace associated with a real eigenvalue of $A^T$, or the real invariant generalized eigenspace associated with a pair of complex conjugate eigenvalues.
Set
$B_i:=A^T|_{E_i}$ and 
$d_i:=\dim E_i.
$
Let
$\pi_i:\mathbb R^d\rightarrow E_i$
be the projection associated with the decomposition, for every $\eta\in\mathbb R^d$, we write
$\eta_i:=\pi_i(\eta)\in E_i$.
\\ \\
Fix $\mathbf s=(s_1,\ldots,s_t)\in\mathbb R^t$ such that 
$
s_i>-\frac{d_i}{2}$. For $u\in C_c^\infty(\mathbb S^1\times\mathbb R^d)$, we define
\[
\|u\|^2
:=
\int_{\mathbb S^1}
\int_{\mathbb R^d}
|\mathcal F_yu(x,\eta)|^2
\prod_{i=1}^t|\eta_i|^{2s_i}
\,d\eta\,dx,
\]
where $\mathcal F_y$ denotes the Fourier transform only in the vertical variable.
Equivalently, by Plancherel's theorem in the base variable,
\[
\|u\|^2
=
\sum_{\xi\in\mathbb Z}
\int_{\mathbb R^d}
|\widehat u(\xi,\eta)|^2
\prod_{i=1}^t|\eta_i|^{2s_i}
\,d\eta.
\]
\begin{remark}
The conditions $s_i>-\frac{d_i}{2}$ 
guarantee that the Sobolev weight
is locally integrable on $\mathbb R^d$. Hence the above norm is
finite on $C_c^\infty(\mathbb S^1\times\mathbb R^d)$.
\end{remark}
We denote by
$L^2\bigl(\mathbb S^1,\dot H_A^{\mathbf s}\bigr)$
the completion of
$C_c^\infty(\mathbb S^1\times\mathbb R^d)$
with respect to this norm.

\subsection{Main Results}
The first result we state reduces anisotropic Sobolev regularity of the SRB
measure to a bound on the concentration multiplicity.

\begin{theorem}
\label{thm:1}
    Assume that there exists a $q\geq N$ such that
    \begin{equation}
    \label{eq:thm1}
        B_\bs q^kM(q)<\left( \ell |\det(A)|\prod_{i=1}^t |\lambda_i|^{2s_i} \right)^q,
    \end{equation}
    where $k = 2(d-1)\sum_{i=1}^t |s_i|$ and $B_\bs=B_\bs(A)>0$ is an explicit constant (see \eqref{eq:constant}) depending only on $A$ and $\bs$, but independent of $q.$
    Then $\mu_{\phi} \in L^2(\Sone,\dot H^\bs_A)$.
\end{theorem}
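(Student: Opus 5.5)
The plan follows the Tsujii / Avila--Gouëzel--Tsujii strategy. We bound the anisotropic norm of $\mathcal L^{nq}u$, where $\mathcal L$ is the transfer operator of $T$ and $u$ is a smooth density supported in $D$. We then pass to the limit, using that $\mathcal L^n u\to\mu_\phi$ weakly (this is where $q\ge N$ and $T^N(D)\subseteq D$ enter, together with the standard anisotropic-space results recalled in section \ref{section: anisotropic spaces}).

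The core estimate is a Lasota--Yorke inequality
\[
\|\mathcal L^{q}u\|^2\le \frac{B_\bs q^kM(q)}{\bigl(\ell|\det A|\prod_i|\lambda_i|^{2s_i}\bigr)^q}\,\|u\|^2+C_q\|u\|_{weak}^2,
\]
where the weak norm is a norm with a compactly embedded (or at least uniformly bounded along the orbit) structure. Iterating this gives $\sup_n\|\mathcal L^{nq}u\|<\infty$, and lower semicontinuity of the norm under weak limits, for instance via Fatou on the Fourier side, yields $\mu_\phi\in L^2(\Sone,\dot H^\bs_A)$.

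To prove the inequality, first write $\mathcal L^q u(x,y)=\sum_{\ba\in\A^q}|\det A|^{-q}\ell^{-q}\,u\bigl(\ba(x),A^{-q}(y-S_q(x,\ba))\bigr)$. Fix a fine partition of unity in $x$ subordinate to cylinders $\Pcal_*(\bc)$ with $\bc\in\A^p$, $p$ large, so that $M(p,q)\le M(q)+$ (negligible). On each piece, take the vertical Fourier transform: each term becomes $\widehat u$ evaluated at frequency $(A^q)^T\eta$, multiplied by the phase $e^{-i\eta\cdot S_q(x,\ba)}$.

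Expanding $\|\cdot\|^2$ produces a double sum over $(\ba,\bb)$. Next, take the Fourier transform in $x$ at frequency $\xi$ and integrate by parts in $x$. For $\ba\in G^q_\bc(\xi,\eta)$, the phase $\xi x+\eta\cdot S_q(x,\ba)$ has derivative bounded below by $2\alpha\ell^{-q}\|(A^q)^T\eta\|$. Meanwhile $\widehat u$ varies in $x$ at scale $\ell^{-q}$ times the frequency $(A^q)^T\eta$, using the derivative bounds on $S_\bc$. Hence repeated integration by parts ($r$ times, which is where $r\ge(d+3)/2$ is needed to make the resulting tails integrable in $d+1$ frequency variables) makes these contributions controlled by a weak, lower-regularity norm.

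For the bad words $\ba\in B^q_\bc(\xi,\eta)$, of which there are at most $M(q)$, Cauchy--Schwarz gives $|\sum_{\ba\in B}\cdot|^2\le M(q)\sum_{\ba}|\cdot|^2$. The diagonal sum is then estimated by the change of variables $\eta\mapsto(A^q)^T\eta$. This produces the factor $|\det A|^{-q}\ell^{-2q}\cdot\ell^{q}$ from the sum over $\ba$ and the Jacobian of $\ba(x)$. The weight transforms as $\prod|\eta_i|^{2s_i}$ versus $\prod|B_i^{-q}\zeta_i|^{2s_i}$. On $E_i$ one has $\|B_i^{-q}\|,\|B_i^{q}\|$ comparable to $|\lambda_i|^{\mp q}$ up to polynomial factors $q^{d_i-1}$ coming from Jordan blocks. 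This is the source of $q^k$ with $k=2(d-1)\sum|s_i|$ and of the constant $B_\bs$, and the result is exactly the contraction ratio in \eqref{eq:thm1}.

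The main obstacle is the nonoscillatory integration by parts in the good set. The weight is anisotropic and singular at $\eta_i=0$, since $s_i$ may be negative, and the lower bound on the phase derivative involves $\|(A^q)^T\eta\|$ rather than $|\eta|$. The bounds must therefore be organised so that the transfer from $\eta$ to $(A^q)^T\eta$ in each $E_i$ costs only polynomial factors in $q$. My first attempt would be a dyadic decomposition of each $\eta_i$ and of $\xi$, performing the integration by parts blockwise, and absorbing all $q$-dependent constants into $C_q$ in front of the weak norm.
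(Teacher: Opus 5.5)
Your architecture matches the paper's: a partition of unity over $\Pcal_*(\bc)$, the splitting of $\A^q$ into $B^q_\bc(\xi,\eta)$ and $G^q_\bc(\xi,\eta)$, Cauchy--Schwarz with $M(q)$ on bad words, the branchwise change of variables $\zeta=(A^q)^T\eta$ with Jordan-block losses giving $B_\bs q^k$, then iteration and Fatou. The gap is the good-word estimate, which is what makes the iteration close, and a dyadic decomposition in the $\eta_i$ will not repair it.

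Integrating by parts in $x$ in
\[
\ell^{-q}\int e^{-i(\xi x+\eta\cdot S_q(x,\ba))}\,\chi_\bc(x)\,\mathcal F_y u\bigl(\ba(x),(A^q)^T\eta\bigr)\,dx
\]
puts the derivative on $\mathcal F_y u(\cdot,\zeta)$ and gains $\ell^{-q}/|\xi+\eta\cdot S_q'(x,\ba)|\le(2\alpha\|\zeta\|)^{-1}$. But the functions you must feed into the inequality are $u=\mathcal L^{jq}h_0$, which concentrate near graphs of slope $O(1)$. So $\partial_{x'}\mathcal F_y u(x',\zeta)$ carries the factor $\zeta\cdot S'$ from the phases $e^{-i\zeta\cdot S(x',\bb)}$ and is of size $\|\zeta\|$, as you observe yourself. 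The two effects cancel: you get no power of $(1+|\xi|+|\eta|)^{-1}$, and the weighted frequency integral is not controlled.

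If you instead bound the derivative term through a horizontal norm such as $\|\partial_x^j u\|_{L^1}$, you do get decay, but that norm is not bounded along the orbit. Indeed $\partial_x\bigl[h_0\bigl(\bb(x),A^{-n}(y-S_n(x,\bb))\bigr)\bigr]$ contains $\nabla_y h_0\cdot A^{-n}S_n'$, so $\|\partial_x\mathcal L^n h_0\|_{L^1}$ grows like $\|A^{-n}\|$ in general. You never exhibit a weak norm that both gives decay on good words and satisfies $\sup_n\|\mathcal L^n u\|_{\mathrm{weak}}<\infty$, so $\sup_n\|\mathcal L^{nq}u\|<\infty$ is not established.

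The missing idea is that the non-stationary phase argument must follow the whole orbit, i.e. slice along stable leaves. The paper does this with the geometric norm $\|\cdot\|_\rho$, in which derivatives of $u$ are tested only after integration along admissible stable manifolds against smooth, non-oscillating test functions. Cone invariance gives $\|P^nu\|_\rho\le C\|u\|_\rho$ uniformly in $n$ (Proposition~\ref{prop:LY_anisotropic}). For $\ba\in G^q_\bc(\xi,\eta)$ one has $(DT^q)^*(\xi,\eta)\in C^*$, so the hyperplanes orthogonal to $(\xi,\eta)$ (the level sets of the phase) pull back under the branch to admissible stable manifolds. Writing $(\xi^2+|\eta|^2)^{\rho/2}\widehat{u_{\ba,\bc}}$ as the transform of $\partial^\rho P^q_\ba(\chi_{\bc\ba}u)$ and slicing along these leaves gives $|\widehat{u_{\ba,\bc}}(\xi,\eta)|\le C(q,p)(1+\xi^2+|\eta|^2)^{-\rho/2}\|u\|_\rho$ (Lemma~\ref{Fdecay}).

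An equivalent, more concrete fix is to apply the inequality only to $u=\mathcal L^mh_0$, expand into branches $\bb\in\A^m$, and integrate by parts against the full phase $\xi x+\eta\cdot S_{q+m}(x,\ba\bb)$. Since $S_{q+m}(\cdot,\ba\bb)=S_q(\cdot,\ba)+A^qS_m(\ba(\cdot),\bb)$ (up to the ordering convention for concatenated words) and $\|S_m'\|\le\alpha/\ell$, the good-word condition gives $|\xi+\eta\cdot S'_{q+m}|\ge(1-\frac1{2\ell})|\xi+\eta\cdot S'_q|$ uniformly in $m$ and $\bb$. The amplitudes $\mathcal F_yh_0\bigl(\cdot,(A^{q+m})^T\eta\bigr)$ now have bounded $x$-derivatives, so you get decay uniformly in $m$.
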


\begin{remark}
\label{rem:monotonicity}
The constant $B_\bs$ is defined in \eqref{eq:constant} in terms of the norms of the matrices $B_i=A^T|_{E_i}.$ In particular, $B_\bs$ is increasingly monotonic on any half-line $s_i\in[0,+\infty).$ Thus, if \eqref{eq:thm1} is true for $s_i\geq0,$ then it is also satisfied for $\bs=\mathbf 0$ and Theorem \ref{thm:1} gives $\mu_\phi\in L^2(\Sone,\dot H_A^\mathbf 0).$
\end{remark}

Now we fully formulate the measure theoretic prevalence of the derivative dispersion condition.\newline
For $\beta>0$, we set

\begin{equation*}
    \mathcal{E}(\beta,\kappa,A) = \left\{ \phi\in\mathcal{U}_\kappa : \limsup_{q\rightarrow\infty} \frac 1q \log M(q) \geq \beta \right\}.
\end{equation*}
\begin{remark}\label{M(q) open condition}
For each fixed \(q\), the quantity $M(q)$ varies upper semi-continuously in $\phi$. It follows that the complement of
\(\mathcal E(\beta,\kappa,A)\) is open, for this reason Theorem \ref{thm:C} can refer to the interior of $\mathcal{F}$.
\end{remark}

\begin{theorem}\label{genericity}
    For every $\beta>0$, $\ell\geq2$ and for $A \in GL_d(\R)$ with $\rho(A)<1$, there exist a family of smooth functions $\psi_1,...,\psi_m \in C^{\infty}(\Sone, \R^d)$ and $D_0>0$ such that, for every $\kappa>D_0$ and $\phi \in \mathcal{U}_{\kappa-D_0}$,
    \begin{equation*}
        \textit{Leb}\left( \left\{(t_1,...,t_m) \in [-1,1]^m : \phi+\sum_{i=1}^{m} t_i\psi_i \in \mathcal{E}(\beta,\kappa,A)\right\} \right)=0.
    \end{equation*}
\end{theorem}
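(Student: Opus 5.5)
We follow Tsujii's prevalence scheme from \cite{tsujii-01}, adapted to slabs in $\R^d$. The perturbation family is
\[
\psi_{i,j}(x)=\cos(2\pi i x)\,e_j,\qquad \psi'_{i,j}(x)=\sin(2\pi i x)\,e_j,\qquad 1\le i\le L,\ 1\le j\le d,
\]
where $e_j$ is the standard basis. The number $L$ of modes is chosen later depending on $\beta,\ell,A$, and $D_0$ is the $C^r$ norm of $\sum|\psi|$, so that $\phi_t:=\phi+\sum t_i\psi_i\in\mathcal U_\kappa$ for all $t\in[-1,1]^m$.

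\textbf{Reduction to a counting estimate.} Since $M(q)\le M(p,q)$ for every $p$, it suffices to show the following. For a suitable $p=p(q)$, say $p=\lceil\theta q\rceil$ with small $\theta$,
\[
\int_{[-1,1]^m}M_{\phi_t}(p,q)^{\gamma}\,dt\le C e^{\gamma\beta q/2}
\]
for some exponent $\gamma\geq 1$. Markov's inequality then gives $\mathrm{Leb}\{t: M_{\phi_t}(p,q)\ge e^{\beta q}\}\le Ce^{-\gamma\beta q/2}$, and Borel--Cantelli shows that almost every $t$ lies outside $\mathcal E(\beta,\kappa,A)$.

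To control $M(p,q)$ for fixed $\bc$, we bound the supremum over slabs $E_q(\be,r)$ as follows. First we discretize $\be\in\mathbb S^{d-1}$ and $r$ at scale comparable to the slab width, which costs at most $e^{Cq}$ directions. We then observe that if more than $K$ words $\ba$ lie in one slab, there is a $(K)$-tuple, or equivalently many pairs, of words whose derivatives $S'_\bc(x,\ba)$ project into a common small interval. The key step is therefore a \emph{pair (or tuple) transversality estimate in the parameter}. For distinct $\ba,\bb\in\A^q$ that first differ at position $n+1$, the map
\[
t\mapsto \be\cdot\big(S'_{\bc}(x,\ba)-S'_\bc(x,\bb)\big)
\]
is affine in $t$. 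Its gradient involves
\[
\be\cdot A^{n}\Big(\ell^{-n-1}\psi_i'(\tau^{-n-1}_\ba x)-\ell^{-n-1}\psi_i'(\tau^{-n-1}_\bb x)\Big)+\text{(tail)}.
\]
We want to show that this gradient has size at least $c\,\ell^{-n}\|(A^n)^T\be\|$ for some coordinate direction. Then the set of $t$ where the difference is smaller than $2\alpha\ell^{-q}\|(A^q)^T\be\|$ has measure at most
\[
\frac{\ell^{-q}\|(A^q)^T\be\|}{\ell^{-n}\|(A^n)^T\be\|}\lesssim \ell^{-(q-n)}\,\|(A^{q-n})^T\|\cdot(\text{polynomial factors}).
\]
Summing over pairs, grouped by the branching time $n$ (there are $\ell^{q}\ell^{q-n}$ pairs at branching time $n$), gives an expected number of close pairs bounded by $\ell^q e^{o(q)}$ times a geometric sum. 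This bounds $\int \#B^2$, and hence $\int M$, by subexponential growth relative to the trivial count. To obtain exponent $\beta$ rather than merely $\beta>\log$ of something, we iterate the argument on subwords: we split $q$ into blocks of length $q_0$, apply the pair estimate at block scale, and use the multiplicativity $M(q_1+q_2)\lesssim M(q_1)M(q_2)\,e^{o(q)}$, which follows from the linear structure $S(x,\ba)=\phi(\cdot)+A S(\cdot,\sigma\ba)$ as in Tsujii.

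\textbf{Main obstacle.} The hardest step is the transversality estimate above. We must show that the $m$ perturbation directions separate $\be\cdot S'$ for two words that branch at time $n$, uniformly in $\be$, despite the non-conformal contraction $A^n$. The danger is cancellation between the leading branching term and the tail $\sum_{j>n+1}$. We plan to handle it by choosing $n$ to be the first differing index and using that the points $\tau^{-n-1}_\ba x$ and $\tau^{-n-1}_\bb x$ are at distance at least about $\ell^{-n-1}$. Then, for $L$ large, the vectors $(\psi'_i(u)-\psi'_i(v))_i$ are bounded below uniformly in $u\neq v$ at that separation scale, while the tail is dominated by a factor $\|A\|/\ell$ relative to the leading term. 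The case where this fails is when $\|(A^{j})^T\be\|$ is not comparable across $j$, which happens for $\be$ near an invariant subspace. This is why we treat $\be$ through the spectral decomposition $E_1\oplus\cdots\oplus E_t$. On each piece we choose the block length $q_0$ large enough that the ratios $\|(A^{q})^T\be\|/\|(A^n)^T\be\|$ are comparable to $|\lambda_i|^{q-n}$ up to $e^{o(q)}$ errors. These errors are absorbed into the $e^{\beta q/2}$ margin by taking $q_0$ and $L$ large and $p/q$ small.
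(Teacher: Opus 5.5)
There is a genuine gap, and it sits exactly where $\beta$ must be arbitrary. A pair estimate only controls a second moment. Summed over branching times, it gives, for a fixed discretized direction, $\int\#\{\text{close pairs}\}\,d\bt\lesssim \ell^{q}e^{o(q)}$, i.e.\ at best $\#B^q_\bc\lesssim \ell^{q/2}$. After the union bound over the $e^{Cq}$ discretized slabs and the $\ell^{p}$ cylinders, Markov at level $e^{\beta q}$ gives about $\ell^{q+p}e^{Cq}e^{-2\beta q}$, which is not even small when $\beta$ is small.

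The block iteration cannot fix this. The submultiplicativity $M(q_1+q_2)\lesssim M(q_1)M(q_2)$ is true, since the slabs $E_q(\be,r)$ renormalize exactly along common prefixes. But it only gives $\limsup_q\frac1q\log M(q)\le \frac1{q_0}\log\bigl(CM(q_0)\bigr)$: it transports an exponential rate and never lowers it. You would therefore already need $M(q_0)<e^{\beta q_0}$ at block scale, which is precisely what pairs cannot provide.

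What is missing is transversality for $N_0$-tuples with $N_0$ large, i.e.\ a lower bound on $\Jac$ of $\bt\mapsto (S'_\bc(x,\ba_i;\bt))_{i\le N_0}\in\R^{dN_0}$. You also need a device for words sharing long prefixes, for which no such bound can hold. The paper supplies both:
\begin{enumerate}
\item Lemma \ref{lemma:genericity1} (Tsujii's pigeonhole along the prefix relations $\sim_j$) turns $M(\tilde q)\ge e^{\beta\tilde q}$ into $d_0$ words of length $q\ge \beta\tilde q/(2\log\ell)$ with pairwise distinct $n_0$-prefixes, all lying in one slab. Here $\beta$ enters only through the choice of $n_0$ with $(d_0+1)e^{-\beta n_0/2}<1/2$.
\item Proposition \ref{generic families} extracts $N_0$ of these words with $\Jac(G_{x_\bc,\sigma})>1/2$.
\item The union bound over $\sigma$, over $\bc\in\A^{p(q)}$ and over the net of Lemma \ref{lemma:genericity2} yields $\ell^{p(q)}\ell^{dq}\|A^{T,q}\|^{N_0-d}\to0$ exponentially, with $N_0$ depending only on $A$ and $\ell$.
\end{enumerate}
Your moment reduction would work with $\gamma$-tuples instead of pairs. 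You would then have to sum over all branching trees, which is exactly what the pigeonhole lemma avoids.

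Even the pair estimate is not established with global Fourier modes. Relative to the leading branching term, the tail is only bounded by $\sum_{m\ge1}\ell^{-m}\|A^m\|$, which is not small in general. Already the $m=1$ term exceeds $1$ if $\|A\|>\ell$, which is allowed since only $\rho(A)<1$ is assumed. Moreover, the deeper preimages $\tau^{-j}_\ba x$, $\tau^{-j}_\bb x$ with $j>n+1$ may coincide with, or come arbitrarily close to, the branch points (near periodic orbits). So no choice of $L$ separates $(\psi_i'(u)-\psi_i'(v))_i$ from the tail uniformly in $x$. Incidentally, the two branch points are distinct $\tau$-preimages of one point, hence at distance at least $1/\ell$, not $\ell^{-n-1}$.

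The paper's perturbations are instead bumps supported in $\ell^{-n}\varepsilon$-neighbourhoods of the $n$-th preimages of a base point, with derivative $\ell^nA^{-(n-1)}\be_j$ there. It restricts to an antichain for $\prec$, losing the factor $1/(n+1)$, and chooses $\varepsilon$ so that no such neighbourhood returns to another within $n+\nu$ steps. This makes the first $n+\nu$ terms of $G_{y,\sigma}|_E$ exactly the identity and the remainder $O(\ell^{-\nu})$; compactness of $\Sone$ then globalizes.
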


We finally record the geometric consequences of anisotropic Sobolev
regularity.

\begin{theorem}
\label{thm:3}
    Assume that $\mu_{\phi} \in L^2(\Sone, \dot H^\bs_A)$ then 
    \begin{enumerate}
        \item \label{it:thm1_item1}if $-d_i/2<s_i<0$ for every $i=1,...,t$, then for almost every $x$ in $\Sone$, setting $S=s_1+\ldots s_t$,
        \[
            \text{dim}_H \left(\mu_{\phi,x}\right) \geq d+2S;
        \]
        \item if $\bs=0$, then $\mu_\phi$ is absolutely continuous with respect to the Lebesgue measure on $\Sone \times \R^d$.\label{it:thm1_item2}
    \end{enumerate}
\end{theorem}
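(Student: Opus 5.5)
Both items come from reading the norm of $L^2(\Sone,\dot H^\bs_A)$ fiberwise. By Fubini, $\mu_\phi\in L^2(\Sone,\dot H^\bs_A)$ means that for almost every $x\in\Sone$ the Fourier transform of the conditional measure satisfies
\[
I_\bs(\mu_{\phi,x}):=\int_{\R^d}|\widehat{\mu_{\phi,x}}(\eta)|^2\prod_{i=1}^t|\eta_i|^{2s_i}\,d\eta<\infty .
\]
To make this precise we first note that $\mu_\phi=\Psi_*(m\times\nu)$ disintegrates over Lebesgue measure on $\Sone$, with $\mu_{\phi,x}$ the push-forward of $\nu$ by $\ba\mapsto S(x,\ba)$. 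Then $\mathcal F_y\mu_\phi(x,\eta)=\widehat{\mu_{\phi,x}}(\eta)$. The membership has to be justified by approximating $\mu_\phi$ by mollifications $u_\varepsilon\in C_c^\infty$. These converge in the norm, and by lower semicontinuity (Fatou in $\eta$ and $x$) the limiting fiberwise integral is finite almost everywhere.

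\textbf{Item 2.} Take $\bs=0$. Then $\widehat{\mu_{\phi,x}}\in L^2(\R^d)$ for a.e.\ $x$, so by Plancherel $\mu_{\phi,x}$ has an $L^2$ density $h_x$ with $\int\|h_x\|_{L^2}^2dx<\infty$. Hence $\mu_\phi=h\,dx\,dy$ with $h\in L^2(\Sone\times\R^d)$, which gives absolute continuity. One should check that the completion identifies $\mu_\phi$ with an honest $L^2$ function. This holds because for $\bs=0$ the norm is exactly the $L^2$ norm, and limits in $L^2$ and weak-$*$ limits of measures agree.

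\textbf{Item 1.} Now $-d_i/2<s_i<0$ for every $i$. We use the classical potential-theoretic link: for $0<\gamma<d$, the $\gamma$-energy satisfies
\[
\iint|y-y'|^{-\gamma}\,d\mu(y)\,d\mu(y')=c\int|\widehat\mu(\eta)|^2|\eta|^{\gamma-d}\,d\eta ,
\]
and finite $\gamma$-energy implies $\DimH\mu\ge\gamma$. We want $\gamma=d+2S$. The difficulty is that our weight is the product $\prod_i|\eta_i|^{2s_i}$, not $|\eta|^{2S}$. Since every $s_i<0$ and $|\eta_i|\le C|\eta|$ (the projections are bounded), we get
\[
\prod_i|\eta_i|^{2s_i}\ \ge\ C^{2S}|\eta|^{2S}.
\]
So $\int|\widehat{\mu_{\phi,x}}|^2|\eta|^{2S}\,d\eta\le C' I_\bs(\mu_{\phi,x})<\infty$. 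Note $2S>-d$ because $\sum d_i=d$, so $\gamma=d+2S\in(0,d)$ and the Riesz kernel identity applies. The energy of $\mu_{\phi,x}$ is therefore finite, and Frostman's lemma (mass distribution / energy criterion) gives $\DimH\mu_{\phi,x}\ge d+2S$ for a.e.\ $x$.

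\textbf{Main obstacle.} The substantive step is justifying the energy–Fourier identity for the finite Borel measures $\mu_{\phi,x}$, which are compactly supported in $\{\|y\|\le\alpha\}$. The standard route is via mollification with Fatou's lemma; the rest is direct comparison of weights.
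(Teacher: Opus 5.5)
Your proposal is correct and follows essentially the paper's proof. Item 2 is the same Fubini/Plancherel argument giving an $L^2$ density $h$. Item 1 uses the same weight comparison $|\eta|^{2S}\lesssim\prod_i|\eta_i|^{2s_i}$, the Fourier representation of the Riesz $(d+2S)$-energy, and the energy criterion. The only difference is that you compare the weights directly on the Fourier side, whereas the paper passes through its intermediate $(A,\bs)$-energy $I_{A,\bs}$; your route is, if anything, slightly cleaner.
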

 In our applications, point \ref{it:thm1_item2}. of Theorem \ref{thm:3} can be improved to include $s_i\geq 0.$ In fact, the property $\mu_\phi\in L^2(\Sone,\dot H_A^\bs)$ is obtained by Theorem \ref{thm:1} as a consequence of condition \eqref{eq:thm1}. In view of Remark \ref{rem:monotonicity}, $\mu_\phi\in L^2(\Sone,\dot H_A^\mathbf 0)$ and Theorem \ref{thm:3}-\emph{\ref{it:thm1_item2}.} implies that $\mu_\phi$ is absolutely continuous with respect to Lebesgue.

\section{Proof of the Statements in the Introduction}\label{section: proof of intro}

In this section we prove the assertions made in the Introduction, assuming the general theorems stated in section \ref{section: setting}. Note that Theorem \ref{thm:C} is a direct consequence of Theorem \ref{genericity} and Remark \ref{M(q) open condition}.

\subsection{Proof of Proposition \ref{upper bound on dim h}}

Proposition \ref{upper bound on dim h} follows from the argument introduced by Falconer in \cite{falconer1988hausdorff}, where he proved upper bounds for the Hausdorff dimension of self-affine fractal sets.

\begin{proof}[Proof of Proposition \ref{upper bound on dim h}]
If $\dim_L(A,\ell)=d$, the assertion is immediate. Otherwise choose $s\in(\dim_L(A,\ell),d)$, set $k=\lfloor s\rfloor$, and let
$\alpha_1(n)\geq\cdots\geq\alpha_d(n)$ be the singular values of $A^n$. Fix $R>0$ such that $[-\alpha,\alpha]^d\subset B(0,R)$, where $B(0,R)$ denotes the euclidean ball centered at 0 of radius $R.$ The set $\Lambda^{\phi}_x$ is contained in the union of $\ell^n$ translates of
$A^nB(0,R)$, which are ellipsoids with semiaxes
$R\alpha_1(n),\ldots,R\alpha_d(n)$. Cover such an ellipsoid by cubes of size
$\delta_n:=R\alpha_{k+1}(n)$.  Up to a dimensional constant, the number of cubes
needed is
$\prod_{i=1}^k\alpha_i(n)/\alpha_{k+1}(n)$.  Summing over the $\ell^n$
ellipsoids, the $s$-dimensional $CR\alpha_{k+1}(n)$-Hausdorff measure 
\begin{equation}\label{eq:content}
  \mathcal{H}^s_{C R\alpha_{k+1}(n)}(\Lambda^\phi_x)
  \leq C R^s\ell^n
  \alpha_1(n)\cdots\alpha_k(n)\alpha_{k+1}(n)^{s-k},
\end{equation}
where $C$ is independent of $n$ and $x$.

It remains only to compute the exponential rate of the last factor.  For
$j=1,\ldots,d$, one has
$\alpha_1(n)\cdots\alpha_j(n)=\lVert\bigwedge^j A^n\rVert$.  Hence the
spectral-radius formula, applied to $\bigwedge^j A$, yields
\[
  \lim_{n\to\infty}
  \bigl(\alpha_1(n)\cdots\alpha_j(n)\bigr)^{1/n}
  =\rho_1\cdots\rho_j.
\]
Writing $s=k+\theta$, the singular-value factor in
\eqref{eq:content} is the geometric interpolation of the cases $k$ and
$k+1$.  Its $n$-th root therefore tends to
$\rho_1\cdots\rho_k\rho_{k+1}^{\theta}=\Phi(s)$.  Since
$s>\dim_L(A,\ell)$, we have $\ell\Phi(s)<1$, so the right-hand side of
\eqref{eq:content} tends to zero exponentially.  Moreover
$\delta_n\leq R\lVert A^n\rVert\to0$.  Thus
$\mathcal{H}^s(\Lambda^\phi_x)=0$, and consequently $\dim_H (\Lambda^\phi_x)\leq s$.  We now pass from the vertical sections to the full attractor. Since the graphs are Lipschitz continuous, it follows that, there exists $L>0$ such that, 
\[
    d_H(\Lambda_x^\phi,\Lambda_{x'}^\phi)
    \leq
    Ld_{\Sone}(x,x').
\]
We consider a partition of $\Sone$ into intervals \(I_1,\ldots,I_{m_n}\) of length at
most \(\delta_n\), where $m_n\leq 2\delta_n^{-1},$
and choose \(x_j\in I_j\). For every $x \in I_j$,
\[
\Lambda_x^\phi
\subset
\left\{
y\in\mathbb R^d:
\operatorname{dist}\bigl(y,\Lambda_{x_j}^\phi\bigr)
\leq L\delta_n
\right\}.
\]
The fibrewise construction above provides a cover of
\(\Lambda_{x_j}^\phi\) by at most $N_n
    \leq
    C\ell^n
    \prod_{i=1}^k
    \frac{\alpha_i(n)}{\alpha_{k+1}(n)}
$
sets of diameter at most \(C\delta_n\), with the constant independent
of \(j\). Enlarging these sets by \(L\delta_n\), and taking their product
with \(I_j\), gives a cover of
$\Lambda^\phi\cap
    \bigl(I_j\times\mathbb R^d\bigr)$
by at most \(N_n\) sets of diameter at most \(C_1\delta_n\), with $C_1$ independent from $x$ and $n$. Summing over the intervals of the partition, we obtain
\[
\begin{aligned}
    \mathcal H^{s+1}_{C_1\delta_n}(\Lambda^\phi)
    &\leq
    2C_1^{s+1}\delta_n^{-1}N_n\delta_n^{s+1}
    =
    2C_1^{s+1}N_n\delta_n^s\\
    &\leq
    2CC_1^{s+1}R^s\ell^n
    \alpha_1(n)\cdots\alpha_k(n)
    \alpha_{k+1}(n)^{s-k},
\end{aligned}
\]

The last expression differs from the right hand side of \eqref{eq:content} only by a multiplicative constant independent of $n$. Since
\(C_1\delta_n\to0\), it follows that
$\mathcal H^{s+1}(\Lambda^\phi)=0,$
and consequently
$\dim_H(\Lambda^\phi)\leq s+1.$
Finally, letting \(s\downarrow\dim_L(A,\ell)\), we conclude that $\dim_H(\Lambda_x^\phi)
    \leq
    \dim_L(A,\ell)$ for every \(x\in\Sone\), and
\[
    \dim_H(\Lambda^\phi)
    \leq
    1+\dim_L(A,\ell).
\]
\end{proof}

\subsection{Proof of Theorems \ref{thm:A} and \ref{thm:B}}

Theorems \ref{thm:A} and \ref{thm:B} follows from a combination of Theorems \ref{thm:1} and \ref{thm:3}. For Theorem \ref{thm:A} is necessary to optimize the choice of $\bs=(s_i,\dots,s_i)$. We start by recalling some equivalent definitions of the affinity dimension (also called Lyapunov dimension, when referred to measures), that facilitate this optimization procedure.

\begin{definition}[Fiber Lyapunov dimension]
Fix $A\in GL_d(\R)$ such that $\rho(A)<1$, and
$\ell\geq 2$. Let
\begin{equation*}
1>|\lambda_1|>\cdots>|\lambda_t|>0
\end{equation*}
be as above the distinct moduli of the eigenvalues of $A$, ordered in decreasing
order, and let $d_i$ be the real dimension of the generalized spectral
subspace associated with the eigenvalues of modulus $|\lambda_i|$.
The fiber Lyapunov dimension of $(A,\ell)$ is defined by
\begin{equation}
\dim_{\mathrm L}(A,\ell)
:=
\sup \left\{
\sum_{i=1}^t\beta_i:
0\leq\beta_i\leq d_i,\quad
-\sum_{i=1}^t\log|\lambda_i|\beta_i\leq\log\ell
\right\}.
\end{equation}
\end{definition}

\begin{remark}
 Assume that $\ell|\det A|<1$, and let
$k\in{1,\ldots,t}$ be the unique index such that
\begin{equation*}
-\sum_{i=1}^{k-1}d_i\log|\lambda_i|
\leq\log\ell
<
-\sum_{i=1}^{k}d_i\log|\lambda_i|.
\end{equation*}
Then
\begin{equation*}
\dim_{\mathrm L}(A,\ell)
=
\sum_{i=1}^{k-1}d_i
-
\frac{
\displaystyle
\log\ell+
\sum_{i=1}^{k-1}d_i\log|\lambda_i|
}{
\displaystyle
\log|\lambda_k|
}.
\end{equation*}
\end{remark}

\begin{remark}
For $\mathbf s=(s_1,\ldots,s_t)$, set $\beta_i:=d_i+2s_i$.
Then 
\begin{equation*}
\ell\prod_{i=1}^t|\lambda_i|^{d_i+2s_i}>1
\end{equation*}
if and only if
\begin{equation*}
-\sum_{i=1}^t\log|\lambda_i|\beta_i<\log\ell.
\end{equation*}
Thus the admissible Sobolev exponents correspond precisely to the
strictly admissible vectors in the optimization problem defining
$\dim_{\mathrm L}(A,\ell)$.
\end{remark}

\begin{proof}[Proof of Theorem~\ref{thm:A}]
The upper bound follows from Proposition \ref{upper bound on dim h}. Indeed, we have that $\supp(\mu_\phi)= \Lambda^\phi$ and that $\supp(\mu_{\phi,x})\subset \Lambda^\phi_x$ for a.e. $x$, implying that $\DimH(\mu_\phi)\leq \DimH(\Lambda^\phi)$ and $\DimH(\mu_{\phi,x})\leq \DimH(\Lambda^\phi_x)$ for a.e. $x\in\Sone$.\newline

We can now focus on the the lower bound. 
Let $\boldsymbol\beta=(\beta_1,\ldots,\beta_t)$
satisfy
\[
    0<\beta_i<d_i
    \qquad\text{and}\qquad
    -\sum_{i=1}^t\log(|\lambda_i|)\beta_i<\log\ell.
\]
Set $s_i:=\frac{\beta_i-d_i}{2}$,
then $-d_i/2<s_i<0$. 
The $\bD$-condition gives \eqref{eq:thm1} for some $q\geq 1.$ Consequently, 
by Theorem~\ref{thm:1} point \emph{\ref{it:thm1_item1}.}~we have $\mu_\phi\in L^2\bigl(\Sone,\dot H_A^{\mathbf s}\bigr)$ and by Theorem~\ref{thm:3}, we get that, for almost every
$x\in\mathbb S^1$,
\[
    \dim_H(\mu_{\phi,x})
    \geq
    d+2S
    =
    \sum_{i=1}^t\beta_i.
\]
By \cite[Theorem~7.7]{Mattila1995}, applied to the projection onto
the first coordinate, it follows that
$\DimH(\mu_\phi)
    \geq
    1+\sum_{i=1}^t\beta_i$,
for every admissible $\boldsymbol\beta$. Taking the supremum over all admissible $\boldsymbol\beta$, we obtain
\begin{equation*}
    \dim_H(\mu_\phi)\geq 1+\dim_{\mathrm L}(A,\ell),
\end{equation*}
which proves the lower bound.\newline

\end{proof}

\begin{proof}[Proof of Theorem~\ref{thm:B}]
    By assumptions we know that $|\det(A)|\ell >1$ and the $\bD$-condition applies. By Theorem~\ref{thm:1} we get that $\mu_\phi \in L^2\bigl(\mathbb S^1,\dot H_A^{\mathbf 0}\bigr)$. Point \emph{\ref{it:thm1_item2}.}~of Theorem~\ref{thm:3} implies then that $\mu_\phi$ is absolutely continuous with respect to the Lebesgue measure. 
\end{proof}

\subsection{Proof of Example \ref{example times 3}}

Finally, we prove example \ref{example times 3}. The structure of this map is particularly suited to produce dispersion of the derivatives of the graphs $S(x,\ba)$.

\begin{proof}[Proof of Example \ref{example times 3}]

We identify $\Sone\times\R^2$ with $\Sone\times \C$, hence the map $T$ can be written as $T(x,w)=\big(3x, \lambda w + e^{2\pi i x}\big)$. We define the non-dynamical slab orthogonal to $\be\in \Sone$ of position $r\in\R$ and width $t>0$ as $E(\be,r,t)=\left\{y \in \R^2 : |\be \cdot y-r| < t\right\}$. The assertion in the example will follow from the next proposition.

\begin{proposition}
\label{prop:example}
For every \(0<\lambda<1\) and \(C_1>0\), there is a constant
\(C_2=C_2(\lambda,C_1)\) such that, for every \(x\), every \(q\geq1\), every $\be\in \Sone$ and $r\in \R$,
\[
 \#\left\{\mathbf a\in\A^q:
 \frac{d}{dx}S(x,\mathbf a)\in
 E\left(\be,r,C_1\left(\frac{\lambda}3\right)^q\right)\right\}
 \leq C_2\,2^q .
\]
\end{proposition}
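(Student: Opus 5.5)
The plan is to encode the words of $\A^q$ as a ternary tree and show that, at every level except the last boundedly many, at most two of the three children of a given prefix can have a descendant whose derivative lies in the slab. That gives the count $2^{q-j_0}3^{j_0}$.

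\textbf{Setup.} Write $\tau_{\ba}^{-j}(x)=y_j$ with $y_0=x$ and $y_j=(y_{j-1}+c_j)/3$, where $c_j\in\{0,1,2\}$ is determined by the letter $a_j$ (up to the fixed relabelling coming from $\Pcal$). In the complex notation,
\[
\frac{d}{dx}S(x,\ba)=\frac{2\pi i}{3}\sum_{j\geq1}\Bigl(\frac{\lambda}{3}\Bigr)^{j-1}e^{2\pi i y_j}.
\]
If only the prefix of length $q$ is prescribed, the same formula holds with the sum truncated, or with an arbitrary continuation; the tail estimate below covers both readings. The $j$-th term has modulus $\frac{2\pi}{3}(\lambda/3)^{j-1}$. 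Once $y_{j-1}$ is fixed by the prefix $[\ba]_{j-1}$, the three possible values of $c_j$ give the phases $e^{2\pi i(y_{j-1}+c)/3}$. These are three unit vectors at mutual angles $2\pi/3$. The total contribution of all terms of index $>j$ has modulus at most
\[
\frac{2\pi}{3}\,\frac{(\lambda/3)^{j}}{1-\lambda/3}.
\]

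\textbf{Key geometric fact.} For any $\be\in\Sone$, the three numbers $\be\cdot v_c$, with $v_c$ the three unit vectors at angle $2\pi/3$, are $\cos\theta$, $\cos(\theta+2\pi/3)$, $\cos(\theta+4\pi/3)$. Their maximum minus their minimum is at least $3/2$; the extreme cases are $\theta=0$, giving spread $3/2$, and $\theta=\pi/2$, giving $\sqrt3$.

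Fix a prefix of length $j-1$ and take the two children $c_{\max}$ and $c_{\min}$ realizing the extreme projections. For every descendant of $c_{\max}$ and every descendant of $c_{\min}$ (of length $q$, or infinite), the values of $\be\cdot\frac{d}{dx}S$ differ by at least
\[
\frac{2\pi}{3}\Bigl(\frac{\lambda}{3}\Bigr)^{j-1}\Bigl(\frac32-\frac{2\lambda/3}{1-\lambda/3}\Bigr)=:c_\lambda\Bigl(\frac{\lambda}{3}\Bigr)^{j-1}.
\]
Here $c_\lambda>0$ because $\frac32-\frac{2\lambda/3}{1-\lambda/3}>0$ is equivalent to $\lambda<9/7$, which holds for every $\lambda<1$. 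The slab $E\bigl(\be,r,C_1(\lambda/3)^q\bigr)$ has width $2C_1(\lambda/3)^q$. Choose $j_0=j_0(\lambda,C_1)$ so that $2C_1(\lambda/3)^{j_0}<c_\lambda(\lambda/3)$. Then for every $j\leq q-j_0$, the descendants of $c_{\max}$ and of $c_{\min}$ cannot both meet the slab. Hence at most two children of each prefix of length $j-1$ have any descendant in the slab.

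\textbf{Counting.} Inductively, the number of prefixes of length $q-j_0$ having some descendant in the slab is at most $2^{q-j_0}$. Each of them has $3^{j_0}$ extensions to length $q$, so the set in Proposition~\ref{prop:example} has cardinality at most $2^{q-j_0}3^{j_0}\leq (3/2)^{j_0}2^q$. For $q<j_0$ the trivial bound $3^q\leq(3/2)^{j_0}2^q$ suffices. This gives $C_2=(3/2)^{j_0}$, uniform in $x$, $\be$ and $r$.

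\textbf{Main obstacle.} The delicate point is making the separation estimate honest. One must check that the tail bound is uniform over all continuations, so that whole subtrees, not just single terms, are separated. One must also check that the spread constant $3/2$ beats twice the geometric tail for all $\lambda<1$; the inequality $\lambda<9/7$ shows there is room to spare. Everything else is bookkeeping.
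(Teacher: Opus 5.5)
Your argument is correct and is essentially the paper's proof. It uses the same ingredients: the ternary tree of prefixes, the fact that the three children of a prefix of length $k$ form an equilateral triangle whose projection onto any line has spread at least $\tfrac32$ times its circumradius, the geometric tail bound of order $(\lambda/3)^k$ (uniform over all continuations), and the resulting count of at most two admissible children per prefix.

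The only difference is how the width $C_1(\lambda/3)^q$ is handled. The paper calls a prefix of length $k$ ``surviving'' if it lies in a slab fattened by a multiple of $(\lambda/3)^k$, proves the bound $2^q$ for slabs of half-width $c_\lambda(\lambda/3)^q$, and then covers the given slab by $\lceil C_1/c_\lambda\rceil$ thinner ones. You instead stop the separation at level $q-j_0$ and pay a factor $3^{j_0}$ on the last levels.
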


\begin{proof}[Proof of Proposition~\ref{prop:example}]
Put \(R=\lambda/3\) and write
\(D(\mathbf a)=\frac1{2\pi}\frac{d}{dx}S(x,\mathbf a)\).  If
\(\mathbf a\in\A^k\) and \(\mathbf a j\in \A^{k+1}\) is obtained by appending
\(j\in\A\), differentiation of the last term in \(S(x,\mathbf a j)\)
gives the recurrence
\begin{equation}\label{eq:recurrence}
 D(\mathbf a j)
 =D(\mathbf a)-\frac{i}{3}R^k
   e^{-i\big(\mathbf a j(x)\big)}.
\end{equation}
Consequently the three direct descendants of \(D(\mathbf a)\) are the vertices
of an equilateral triangle centred at \(D(\mathbf a)\), with
circumradius \(R^k/3\). The recurrence also shows that a descendant \(\mathbf b\in\A^m\) of
\(\mathbf a\in\A^k\), \(m>k\), satisfies
\begin{equation}\label{eq:tail-and-constant}
 |D(\mathbf b)-D(\mathbf a)|\leq AR^k,
 \qquad
 A=\frac{1}{3(1-R)}.
\end{equation}
Choose \(c>0\) so that \((A+c)R<1/4\); for instance one may take
\[
 c=c_\lambda
 :=\frac12\left(\frac{1}{4R}-A\right)
 =\frac{9-7\lambda}{8\lambda(3-\lambda)}.
\]

Fix a direction $\be\in \Sone$ and a coordinate $r\in\R$. Call \(\mathbf a\in\A^k\) surviving if
\(D(\ba)\in E\big(\be,r,(A+c)R^k\big)\).  Estimate
\eqref{eq:tail-and-constant} has two immediate consequences.  A word
which is not surviving has no descendant $\bb\in\A^m$ in
\( E(L,cR^m)\); conversely, if
\(D(\mathbf b)\in E(\be,r,cR^q)\), then every prefix of \(\mathbf b\)
is surviving.

For every surviving word we bound from above how many of its direct descendant can be surviving.  The orthogonal
projection of a unit equilateral triple onto any line has range at
least \(3/2\).  Hence , given $\ba\in\A^k$, by \eqref{eq:recurrence}, the three orthogonal projections of $D(\ba j)$, for $j=0,1,2$, have range at least \(R^k/2\).
At least one of them therefore has absolute value at least \(R^k/4\),
which is strictly larger than \((A+c)R^{k+1}\).  Thus every surviving
word has at most two surviving direct descendants.  Starting from the empty
word, this gives
\[
 \#\{\mathbf a\in\A^q:
 D(\mathbf a)\in E(\be,r ,cR^q)\}\leq2^q.
\]

Finally, a slab of width \(C_1R^q\) is covered by
\(\max\{1,\lceil C_1/c\rceil\}\) parallel slabs of width \(cR^q\).  The preceding estimate applied to these
parallel axes proves the claim, with \(C_2=\max\{1,\lceil C_1/c_\lambda\rceil\}\).
\end{proof}

Fix $q>0$. By the uniform continuity of $\frac d{dx}S(\ba,x)$, we can pick $p(q)>0$ so large that for, any $\bc\in\A^{p(q)}$, $\frac d{dx}S_\bc(\ba,x)\in E(\be,r,2\alpha(\lambda/3)^q)$ for some $x\in\mathcal{P}^*(\bc)$ implies that $\frac d{dx}S_\bc(\ba,x_\bc)\in E(\be,r,4\alpha(\lambda/3)^q)$, where $x_\bc$ denotes the left endpoint of $\mathcal{P}^*(\bc)$. Applying the previous Lemma for $q$ and $t=4\alpha(\lambda/3)^q$ we obtain that
\begin{equation*}
    M(q)\leq C_2 2^q.
\end{equation*}
Recalling that $\ell\lambda^2>1$ and provided that $q$ is sufficiently large, the hypothesis of Theorem \ref{thm:1} are satisfied for $\bs=0$. In combination with Theorem \ref{thm:3}, this implies that the SRB measure is absolutely continuous, as wished.

\end{proof}

\section{Energy and Hausdorff Dimension}\label{section: energy and sobolev}
The goal of this section is the proof of Theorem \ref{thm:3}. Since we are going to apply the energy criterion to compute the a lower bound for the Hausdorff dimension, we firstly recall the notion of energy of a measure and we introduce the energy associated to the matrix $A.$
\subsection{The classical definition of the Energy}
Assume that $-d/2<s<0.$ For a finite compactly supported Borel measure $\mu$ on $\mathbb R^d$, recall that its $(d+2s)-$energy is defined by 
\[ I_{d+2s}(\mu) := \int_{\R^d}\int_{\R^d} \frac{d\mu(y)\,d\mu(z)} {|y-z|^{d+2s}}. \] 

Since $0<d+2s<d$, the Fourier representation of the energy
\cite[Lemma~12.12]{Mattila1995} gives \[ I_{d+2s}(\mu) = c_{d,s} \int_{\mathbb R^d} |\widehat\mu(\eta)|^2 |\eta|^{2s} \,d\eta, \] where $c_{d,s}>0$ depends only on $d$, $s$, and on the convention used to compute the Fourier transform. 

\subsection{The Energy associated to the matrix}
The decomposition \(\mathbb R^d=E_1\oplus...\oplus E_t\) describes the \(A^{T}\)-invariant directions in frequency space. We now introduce the corresponding dual directions in the physical space. Set $F_i := \left( \bigoplus_{j\neq i}E_j \right)^\perp$, then $\dim F_i=d_i,$ and $\mathbb R^d=F_1\oplus\cdots\oplus F_t$ is a $A-$invariant decomposition. We denote by $\pi'_i:\R^d \rightarrow F_i$ the projections associated with this decomposition. 
\begin{definition}
    Let $\bs=(s_1,...,s_t) \in \R^t$ be such that $-d_i/2<s_i<0.$ For a finite compactly supported Borel measure $\mu$ on $\mathbb R^d$, we define its $(A,\mathbf s)-$energy by 

\[ I_{A,\mathbf s}(\mu) := \int_{\R^d} \int_{\R^d}\prod_{i=1}^t \frac{1} {|\pi'_i(y-z)|^{d_i+2s_i}} \,d\mu(y)\,d\mu(z). \]
\end{definition} 
Since $0<d_i+2s_i<d_i$ applying the Fourier representation of the Riesz energy \cite[Lemma~12.12]{Mattila1995} on each factor gives \[ I_{A,\mathbf s}(\mu) = c_{A,\mathbf s} \int_{\mathbb R^d} |\widehat\mu(\eta)|^2 \prod_{i=1}^t |\eta_i|^{2s_i} \,d\eta, \] where $c_{A,\mathbf s}>0$ depends on the decomposition, on $\mathbf s$ and on the convention used to compute the Fourier transform. Consequently, if \[ \nu=\int_{\mathbb S^1}\nu_x\,dx, \] then \[ \int_{\mathbb S^1} I_{A,\mathbf s}(\nu_x) \,dx = c_{A,\mathbf s} \int_{\mathbb S^1} \int_{\mathbb R^d} |\widehat{\nu_x}(\eta)|^2 \prod_{i=1}^t |\eta_i|^{2s_i} \,d\eta\,dx = c_{A,\mathbf s}\|\nu\|^2. \]

We now compare the $(A,\mathbf s)$-energy with the classical energy. Given $
    S=\sum_{i=1}^t s_i,
$ since the projections $\pi_i:\mathbb R^d\rightarrow F_i$ are continuous and $s_i<0$, there exists a constant $C_{A,\mathbf s}>0$ such that $ |\eta|^{2S} \leq C_{A,\mathbf s} \prod_{i=1}^t |\eta_i|^{2s_i} $. Therefore, 
\begin{equation*}
    I_{d+2S}(\mu) \leq C_{A,\mathbf s} I_{A,\mathbf s}(\mu).
\end{equation*}
\begin{corollary}
\label{Energy}
    For each $\mu$, finite compactly supported Borel measure on $\Sone \times\R^d$,
    if $\mu \in L^2\bigl(\mathbb S^1,\dot H_A^{\mathbf s}\bigr)$ then $$\int_{\Sone} I_{d+2S}(\mu_x) dx$$ is finite.
\end{corollary}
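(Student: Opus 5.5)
The plan is to chain three ingredients: disintegrate $\mu$ along the base, use the Fourier representation of $I_{A,\bs}$ on each fiber to recognize the anisotropic norm, and then pass to the classical energy through the comparison inequality established just before the statement.

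\textbf{Step 1: disintegration.} Since $\mu$ is a finite compactly supported Borel measure on $\Sone\times\R^d$, disintegrate it along the projection to $\Sone$ and write $\mu=\int_{\Sone}\mu_x\,dx$. Here each $\mu_x$ is a finite compactly supported measure on $\R^d$. Strictly this requires the base marginal of $\mu$ to be absolutely continuous. That holds for $\mu_\phi$, whose marginal is $m$; in general it is implicit once $\mu$ lies in $L^2(\Sone,\dot H^\bs_A)$, a space of functions of $x$ with values in $\dot H^\bs_A$.

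\textbf{Step 2: identify the norm with the fiberwise $(A,\bs)$-energy.} By the definition of the norm, $\mathcal F_y\mu(x,\eta)=\widehat{\mu_x}(\eta)$ for a.e.\ $x$. Hence
\[
\|\mu\|^2=\int_{\Sone}\int_{\R^d}|\widehat{\mu_x}(\eta)|^2\prod_{i=1}^t|\eta_i|^{2s_i}\,d\eta\,dx .
\]
The identity derived above from \cite[Lemma~12.12]{Mattila1995}, applied factor by factor, then gives
\[
\int_{\Sone}I_{A,\bs}(\mu_x)\,dx=c_{A,\bs}\|\mu\|^2<\infty .
\]

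\textbf{Step 3: compare with the classical energy.} Apply the pointwise inequality $I_{d+2S}(\mu_x)\le C_{A,\bs}I_{A,\bs}(\mu_x)$ to each fiber and integrate over $x$. This gives $\int_{\Sone}I_{d+2S}(\mu_x)\,dx\le C_{A,\bs}c_{A,\bs}\|\mu\|^2<\infty$. Since $s_i<0$ for every $i$, we have $0<d+2S<d$, so the classical Riesz energy is the right object here.

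\textbf{Main obstacle.} The delicate point is justifying Step 2 for a measure, since the space is defined as a completion of $C_c^\infty$. I would handle it by mollification. Take $\mu^\varepsilon=\mu*\rho_\varepsilon$ with a smooth approximate identity $\rho_\varepsilon$ in both variables. The Fourier side gives $|\widehat{\mu^\varepsilon}|\le|\widehat\mu|$, so $\|\mu^\varepsilon\|\le\|\mu\|$. On the energy side, the kernels $\prod_i|\pi'_i(y-z)|^{-(d_i+2s_i)}$ and $|y-z|^{-(d+2S)}$ are lower semicontinuous and nonnegative. So Fatou's lemma and weak convergence $\mu^\varepsilon_x\to\mu_x$ for a.e.\ $x$ (after mollifying in $x$ as well and passing to a subsequence) give
\[
\int_{\Sone} I_{d+2S}(\mu_x)\,dx\le\liminf_{\varepsilon\to0}\int_{\Sone} I_{d+2S}(\mu^\varepsilon_x)\,dx\le C\|\mu\|^2 .
\]
I would check the comparison inequality on the Fourier side, $|\eta|^{2S}\le C\prod_i|\eta_i|^{2s_i}$, which holds because $|\eta_i|\le C|\eta|$ and $s_i<0$. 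Integrating it against $|\widehat{\mu_x}|^2$ gives the energy comparison directly, without using physical-space kernels.
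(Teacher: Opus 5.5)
Your proposal is correct and follows essentially the paper's route: disintegrate $\mu$, recognize $\|\mu\|^2$ as $\int_{\Sone}\int_{\R^d}|\widehat{\mu_x}(\eta)|^2\prod_{i=1}^t|\eta_i|^{2s_i}\,d\eta\,dx$, and pass to the Riesz energy $I_{d+2S}$ via the weight comparison $|\eta|^{2S}\le C\prod_{i=1}^t|\eta_i|^{2s_i}$, which holds because every $s_i<0$. The mollification detour is unnecessary, because \cite[Lemma~12.12]{Mattila1995} already applies directly to each compactly supported finite measure $\mu_x$. Your closing Fourier-side comparison therefore proves the statement on its own. It also has the mild advantage of not needing the exact Fourier formula for the product kernel in $I_{A,\bs}$, which in general holds only up to comparable weights, since $E_i$ and $F_i$ need not coincide.
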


\subsection{Proof of Theorem \ref{thm:3}}

\begin{proof}
\emph{Part \textup{(1)}.}
Assume that
$-\frac{d_i}{2}<s_i<0$ and $S:=\sum_{i=1}^t s_i$.
Applying Corollary~\ref{Energy} to $\mu_\phi$, we obtain
\begin{equation*}
\int_{\Sone} I_{d+2S}(\mu_{\phi,x}),dx<\infty.
\end{equation*}
Hence $
I_{d+2S}(\mu_{\phi,x})<\infty
$
for Lebesgue almost every $x\in\Sone$. The energy criterion for
Hausdorff dimension therefore yields
\begin{equation*}
\DimH (\mu_{\phi,x})\geq d+2S
\end{equation*}
for almost every $x\in\Sone$.
\\ \\

\emph{Part \textup{(2)}.} Assume that $\bs=\mathbf 0$. Then
\[
    \int_{\Sone}\int_{\R^d}
    \left|\mathcal{F}_y\mu_\phi(x,\eta)\right|^2
    \,d\eta\,dx
    =
    \|\mu_\phi\|_{L^2(\Sone,\dot H_A^{\mathbf 0})}^2
    <\infty.
\]
Thus, for almost every $x\in\Sone$,
$\mathcal{F}_y{\mu_{\phi,x}}\in L^2(\R^d).$
By Plancherel's theorem, there exists $h_x\in L^2(\R^d)$ such that
\[
    d\mu_{\phi,x}(y)=h_x(y)\,dy.
\]
Moreover, $\int_{\Sone}\|h_x\|_{L^2(\R^d)}^2\,dx<\infty.$
Therefore, the function $h(x,y):=h_x(y)$ belongs to
$L^2(\Sone\times\R^d)$ and
\[
    d\mu_\phi(x,y)=h(x,y)\,dx\,dy.
\]
Hence $\mu_\phi$ is absolutely continuous with respect to the Lebesgue measure.
\end{proof}

\section{Geometric Anisotropic Spaces}\label{section: anisotropic spaces}
In this section we prove some preliminary results that are necessary for the proof of Theorem \ref{thm:1} in Section \ref{section: proof of theorem 1}. More precisely, we study the Perron-Frobenius operator on a suitable \emph{geometric} anisotropic Banach space in the spirit of \cite{liverani-gouezel-06,gouezel-liverani-08} extending the techniques of \cite{avila-gouezel-tsujii-06} to the higher-dimensional setting.

\subsection{The transfer operator}

Let $P$ be the Perron--Frobenius operator associated to $T$, therefore
\begin{align*}
    Pu(\mathbf x)
    &=
    \frac{1}{\ell|\det(A)|}
    \sum_{\mathbf y\in T^{-1}(\mathbf x)}
    u(\mathbf y) \\
    &=
    \frac{1}{\ell|\det(A)|}
    \sum_{a\in\A}
    u\left(
        \tau_a^{-1}(x),
        \frac{y-\phi(\tau_a^{-1}(x))}{|\det(A)|}
    \right),
\end{align*}

More generally, for a word $\ba\in\A^q$, the inverse branch of $T^q$
corresponding to $\ba$ is given by
\[
    (x,y)
    \longmapsto
    \left(
        \tau_{\ba}^{-q}(x),
        \frac{y-S(x,\ba)}{|\det(A^q)|}
    \right).
\]
The Jacobian of $T^q$ is $\ell^q|\det(A^q)|$ and hence the branch operator
associated to $\ba$ is
\[
    P_{\ba}^q u(x,y)
    =
    \frac{1}{|\det(A^q)|\ell^q}
    u\left(
        \tau_{\ba}^{-q}(x),
        \frac{y-S(x,\ba)}{|\det(A^q)|}
    \right).
\]
Thus
\[
    P^q u
    =
    \sum_{\ba\in\A^q}P_{\ba}^q u.
\]

\subsection{The anisotropic geometric norms}
Following \cite{liverani-gouezel-06}, we introduce a family of admissible stable manifolds which generalize to $\Sone\times\R^d$ the admissible stable curves considered in \cite{avila-gouezel-tsujii-06}. Let $W\colon\cD(W)\to \Sone\times\R^d$ be a $C^r$ manifold whose domain of definition $\cD(W)$ is a compact $d$-dimensional ball in $\R^d$. Then, for any $n\in\N$ there exist $\ell^n$ manifolds $W_i\colon\cD(W)\to \Sone\times\R^d,$ $j=1,\dots, l^n$, such that $T^n\circ W_j=W.$ Let $\cW$ be the family of $C^r$ manifolds $W\colon\cD(W)\to\R^d$ such that $W(t)=(\pi\circ W(t), t),$ where $\pi\colon \Sone\times \R^d\to\Sone$ is the projection on the first component. Moreover, there exist constants $c_i,$ $i=0,\dots, r,$ such that $$\norm{d^i(\pi\circ W)(t)}\leq c_i,$$
for all $i=0,\dots, r$ and all $t\in\cD(W)$. If we take any $W_j$ such that $T^n\circ W_j=W,$ since $T$ is uniformly hyperbolic, we can reparameterize the manifold $\tilde{W}_j\circ g=W_j$ so that $\tilde{W}_j\colon\cD(\tilde{W}_j)\to \R^d$ and $$\norm{d^i(g^{-1})(t)}\leq c|\det(A)|^{n},$$
for any $0\leq i\leq r$ and all $t\in\cD(\tilde{W}_j).$
In particular, we assume $c_1=\alpha,$ so that all tangent vectors to the manifolds $W\in\cW$ belong to the $DT^{-N}$-invariant cone $$C=\left\{(u,v)\in\R \times \R^d: |u|\leq \alpha^{-1}\norm{v}\right\}.$$

We can now define the anisotropic norm. Let us fix $\rho\in\N$ such that $\rho\leq r-1.$ For a function $u\in C^r(D),$ we define 
$$\norm{u}_\rho
    :=
    \max_{\alpha+\underline{\beta}\leq \rho}
    \sup_{W\in\cW}
    \,\ \sup_{\psi\in C^{\alpha+\underline{\beta}}(W)}
    \left|
        \int_{\cD(W)}
        \psi(s)\,
        \partial_x^\alpha\partial_{\underline{y}}^{\underline{\beta}} u(W(s))\,ds
    \right|,$$
where the last supremum is taken over all test functions $\psi$ such that
\[
    \supp  \psi\subset \operatorname{Int}(D(\gamma)),
    \qquad
    \|\psi\|_{C^{\alpha+\underline{\beta}}}\leq 1.
\]
Notice that $\underline{\beta}=(\beta_1,\dots,\beta_d)$ is a vector which represents the number of \emph{stable} derivatives along the directions $(y_1,\dots,y_d).$ With a slight abuse of notation we write $a+\underline{\beta}\leq \rho$, meaning that $\alpha+\sum_{i=1}^{d}\beta_i\leq \rho.$ 
The norm $\norm{\cdot}_\rho$ satisfies the following Lasota-Yorke type inequality. 
\begin{proposition}
\label{prop:LY_anisotropic}
    For any integer $0\leq\rho\leq r-1$  there exists a constant $C>0$ for which
    \begin{equation}
        \label{eq:LY1}
        \norm{P^nu}_\rho\leq C\norm{u}_\rho,
    \end{equation}
    for all $u\in C^r(D)$ and all $n\in\N.$
    In addition, if $1\leq\rho\leq r-1,$ 
    \begin{equation}
    \label{eq:LY2}
        \norm{P^nu}_\rho\leq Cl^{-\rho n}\norm{u}_\rho+C_n\norm{u}_{\rho-1},
    \end{equation}
    where $C_n$ is a constant that may depend on $n.$
\end{proposition}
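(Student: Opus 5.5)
We follow the standard Gouëzel--Liverani scheme as adapted in \cite{avila-gouezel-tsujii-06}, working one branch at a time. Fix $W\in\cW$, a multi-index $(\alpha,\underline\beta)$ with $\alpha+|\underline\beta|\leq\rho$, and a test function $\psi$ with $\|\psi\|_{C^{\alpha+|\underline\beta|}}\le1$. We write $P^n u=\sum_{\ba\in\A^n}P^n_{\ba}u$. Since $T$ maps vertical fibers affinely to vertical fibers, each preimage manifold $W_{\ba}$ (with $T^n\circ W_{\ba}=W$) is again a graph over the stable directions, after the reparametrization described above. Its base component is contracted by $\ell^{-n}$, and its fiber component is the affine image $y\mapsto A^{-n}(y-S_n(x,\ba))$. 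The cone $C$ is $DT^{-N}$-invariant and $c_1=\alpha$, so the tangent vectors remain in $C$ and $W_{\ba}\in\cW$, possibly after cutting it into a uniformly bounded number of pieces.

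\emph{Step 1: chain rule.} We compute $\partial_x^\alpha\partial_y^{\underline\beta}(u\circ T_{\ba}^{-n})$. Each $\partial_x$ produces a factor $\ell^{-n}$ times $\partial_x$ applied to the composed function, together with a factor $-(\partial_x S_n)\cdot A^{-n}\nabla_y$. The stable derivatives $\partial_y$ produce $A^{-n}$ factors. We do not pay for these factors: we integrate by parts along $W$ and move every stable derivative $\partial_y$ onto the test function. This is possible because $W$ is parametrized by $t\in\R^d$, so stable derivatives along the image are tangential derivatives of $s\mapsto u(W(s))$.

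\emph{Step 2: change of variables.} We then change variables $s\mapsto g(s)$ on $W_{\ba}$. The Jacobian $|\det A^n|$ cancels against the normalization $|\det A^n|^{-1}$ in $P^n_{\ba}$, leaving only $\ell^{-n}$ per branch. Summing over the $\ell^n$ branches therefore gives total weight $O(1)$. The pulled-back test function $\psi\circ g$ has $C^k$ norm bounded by $C\|\psi\|_{C^k}$, because $g$ contracts. This proves \eqref{eq:LY1}. The cross terms involving $\partial_xS_n$ are uniformly bounded by the derivative bound on $S_{\bc}$, and $\partial_y$ is always absorbed into $\psi$, so no growing factor appears.

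\emph{Step 3: the Lasota--Yorke inequality \eqref{eq:LY2}.} The terms with the full number $\rho$ of derivatives are of two kinds. Terms in which all $\rho$ derivatives are unstable, that is pure $\partial_x^\rho$, carry the factor $\ell^{-\rho n}$ and are bounded by $C\ell^{-\rho n}\|u\|_\rho$. Every other term involves either a stable derivative, which after integration by parts costs one derivative of $\psi$ and leaves at most $\rho-1$ derivatives on $u$, or a derivative falling on the coefficients $\partial_xS_n$ or on $\psi\circ g$. All such terms are bounded by $C_n\|u\|_{\rho-1}$.

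The main obstacle is this integration-by-parts step. A stable derivative moved onto $\psi\circ g$ multiplies it by $A^{n}$-type factors under the reparametrization; these are harmless, since $\|A^n\|\le C$, but the argument must use that the test class is $C^{\alpha+|\underline\beta|}$ so that one derivative is available. A second delicate point is that the mixed term $(\partial_xS_n)\cdot A^{-n}\nabla_y u$ contains the expanding factor $A^{-n}$. We plan to handle it by rewriting $A^{-n}\nabla_y$ as the derivative along the preimage fiber, i.e. undoing the chain rule, before integrating by parts. This is the step to check first.
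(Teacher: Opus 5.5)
You are on the same route as the paper, which simply defers to the Gou\"ezel--Liverani argument of \cite[Lemma 5]{avila-gouezel-tsujii-06}. As written, however, there is a genuine gap exactly where the lemma has content: the treatment of the top-order derivatives. It rests on two identifications, and both are false.

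First, $\partial_{y_j}$ is not tangent to a general $W\in\cW$. Along $W(t)=(\pi W(t),t)$ one has $\partial_{t_j}(u\circ W)=\partial_{y_j}u+\partial_{t_j}(\pi W)\,\partial_xu$, so a vertical derivative is tangential only modulo a $\partial_x$-term of the same order. This matters because $\cW$ must contain tilted manifolds: Lemma~\ref{Fdecay} integrates over pieces orthogonal to $(\xi,\eta)$.

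Second, $\partial_x$ is not the contracted direction. Indeed
\[
\partial_x\bigl(u\circ T_\ba^{-n}\bigr)(x,y)=\ell^{-n}(\partial_xu)\bigl(T_\ba^{-n}(x,y)\bigr)-\bigl\langle(\nabla_yu)\bigl(T_\ba^{-n}(x,y)\bigr),\,A^{-n}S_n'(x,\ba)\bigr\rangle ,
\]
and the second term is of top order with a coefficient that can be of order $\norm{A^{-n}}$. Hence your Step~3 claim that pure $\partial_x^\rho$ terms carry $\ell^{-\rho n}$ is false. Likewise, the Step~2 sentence that the cross terms are controlled by the bound on $S_\bc$ does not prove \eqref{eq:LY1}. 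The point you defer as ``the step to check first'' is the whole lemma.

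The missing idea is a branchwise decomposition, on $W$ itself, of each coordinate direction into the pushed-forward unstable direction plus a vector tangent to $W$.
\begin{enumerate}
\item Undoing the chain rule, as you propose, gives the exact identity $\partial_{v_\ba}(u\circ T_\ba^{-n})=\ell^{-n}(\partial_xu)\circ T_\ba^{-n}$. Here $v_\ba:=(1,S_n'(x,\ba))$ is the image of $\ell^{-n}\partial_x$ under $DT^n$ along the branch $\ba$.
\item Write $\partial_x=a\,v_\ba+w$ and $\partial_{y_j}=b_j\,v_\ba+w_j$ with $w,w_j$ tangent to $W$. Solving gives $a=(1-d(\pi W)S_n')^{-1}$ and $b_j=-a\,d(\pi W)e_j$.
\item These coefficients are bounded uniformly in $n$. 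Tangent vectors of $W$ lie in $C$, so $\norm{d(\pi W)}\le\alpha^{-1}$, while $\norm{S_n'}\le\alpha/\ell$; hence $|d(\pi W)S_n'|\le 1/2$. This transversality of the stable and unstable cones is absent from your sketch, and it is what absorbs the tilt of $W$.
\item Expand $\partial_x^{\alpha}\partial_y^{\underline\beta}$ applied to $u\circ T_\ba^{-n}$. Take any term containing some $w$ or $w_j$, and commute that derivative outward (commutators are of lower order). Integrating it by parts on $W$ leaves at most $\rho-1$ derivatives against a $C^{\rho-1}$ test function. The branchwise bound at order $\rho-1$ then controls it by $C\ell^{-n}\norm{u}_{\rho-1}$, after including the normalization of $P^n_\ba$.
\item The only remaining top-order term has all derivatives along $v_\ba$. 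Up to lower-order terms, it equals a bounded coefficient times $\ell^{-\rho n}(\partial_x^\rho u)\circ T_\ba^{-n}$.
\item Only then change variables to $W_\ba$. Induction on $\rho$ gives both \eqref{eq:LY1} and \eqref{eq:LY2}.
\end{enumerate}

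Two smaller points. If you integrate by parts on $W_\ba$ instead, you need the exact cancellation of $A^{-n}$ against the $A^{n}$ produced by differentiating $\psi\circ g$. The bound $\norm{A^n}\le C$ is not enough there, since $\norm{A^n}\norm{A^{-n}}$ grows when $A$ is non-conformal. Also, your uniformly bounded number of pieces holds only for $W_\ba\cap D$, because $W_\ba$ itself is stretched by $A^{-n}$. You must use that $u$ lives on the bounded set $D$ to get weight $\ell^{-n}$ per branch.
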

The proof of Proposition \ref{prop:LY_anisotropic} follows from that of \cite[Lemma 5]{avila-gouezel-tsujii-06}, with few adjustments due to the increase of dimension.

\begin{remark}
    The norm $\norm{\cdot}_\rho$ is a particular example of anisotropic \emph{geometric} norm, firstly introduced in \cite{blank-keller-liverani-02} and then refined in \cite{liverani-gouezel-06,gouezel-liverani-08}. In fact, if we define $\cB_\rho$ as the completion of $C^r(D)$ with respect to $\norm{\cdot}_\rho,$ then its elements are distributions that behave as smooth functions in the unstable direction, while they are singular in the stable direction. By definition, every smooth function $u\in C^r(D)$ belongs to $\cB_\rho,$ but the anisotropic Banach space is strictly larger than $C^r(D).$ For instance, let $U$ be an horizontal segment in the cylinder $\Sone\times\R^d$ and consider a compactly supported $C^r(D)$ density $\phi$ on $U.$ Then it is not difficult to show that the distribution 
    $$\phi(u):=\int_U u \phi$$ 
    belongs to $\cB_\rho$ (see \cite[Proposition 4.4]{liverani-gouezel-06} for the proof). 

    In addition, we notice that our anisotropic norm depends only on a parameter $\rho,$ while, in general, there are two indexes $p$ and $q$ that represent the regularity in the unstable direction and the negative regularity in the stable direction, respectively. In our setting, $p=\rho$ and $q=0.$ $\cB_\rho$ is not the right space if one wants to prove that $P$ is a quasi-compact operator, because $\cB_\rho$ is not compactly included in $\cB_{\rho-1}$ and it is not possible to apply Hennion's Theorem \cite{hennion-93}. On the other hand, we do not aim to find the SRB measure in any anisotropic Banach space $\cB_\rho,$ but our goal is to prove that it belongs to the anisotropic (homogeneous) Sobolev space $L^2(\Sone,\dot{H}_A^\bs).$
\end{remark}

\subsection{Applications to Fourier Decay}

In this section we provide a connection between geometrical information and Fourier analysis of our system. We start by showing that elements of the geometric Anisotopic Spaces have good Fourier decay in all direction except a small cone.\newline

Let $C^*$ be the cone in $\mathbb R \times \R^d$ defined by
\[
    C^*
    =
    \left\{
        (\xi,\eta)\in\R \times \R^d
        : \norm{\eta}\leq \alpha^{-1}|\xi|
    \right\},
\]
so that
\[
    (DT^N_x)^*(C^*)\subset C^*
    \qquad
    \text{for } x\in \Sone \times\R^d.
\]

\begin{lemma}
\label{Fdecay}
    Let $\rho\leq r-1$, $q\geq N$ and $p\geq0$ be integers, $\ba$ and $\bc$ elements of $\A^q$ and $\A^q$ respectively, and $\chi : \Sone \times \R^d \rightarrow \R^d$ a $C^\infty$ function supported on $P_*(\bc\ba) \times \R^d$. Take $(\xi,\eta) \in \mathbb{Z}\times \R^d\setminus\{(0,0)\}$ such that, for any $x \in P_*(\bc\ba) \times \R^d$, $(DT^q_x)^*(\xi,\eta) \in C^*$. Then, for any $\phi\in C^r(\Sone,\R^d)$,
    
\begin{equation*}
     (\xi^2 + \norm{\eta}^2)^{\rho/2} \mathcal{F} (P^q(\chi\phi))(\xi,\eta) \leq C(q,\chi) \|\phi\|_\rho,
\end{equation*}
where $C(q,\chi)$ may depend on $q$ and $\chi$.
\end{lemma}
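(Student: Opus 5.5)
We prove the bound by integration by parts along a direction adapted to the dual vector $(\xi,\eta)$, following the proof of the analogous statement in \cite{avila-gouezel-tsujii-06}. Since $\chi$ is supported on $\Pcal_*(\bc\ba)\times\R^d$, only the single inverse branch labelled by $\bc\ba$ (and its neighbours) contributes to $P^q(\chi\phi)$. We write $u=\chi\phi$. By duality,
\[
\mathcal F(P^q u)(\xi,\eta)=\int u\cdot e^{-2\pi i(\xi,\eta)\cdot T^q}\,dm
\]
(up to the Jacobian normalisation). This turns the Fourier coefficient into an integral of $u$ against the oscillating test function $e_{q}(\mathbf x):=e^{-2\pi i\langle(\xi,\eta),T^q\mathbf x\rangle}$ on $\Pcal_*(\bc\ba)\times\R^d$.

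We then foliate the support by admissible stable manifolds $W\in\cW$, namely the vertical-like $d$-dimensional leaves $W(t)=(\pi\circ W(t),t)$. By Fubini we write the integral as $\int_{\Sone}\int_{\cD(W_x)}\psi\,u(W_x(t))e_q(W_x(t))\,dt\,dx$ with a smooth partition of the $x$ variable. On each leaf we integrate by parts $\rho$ times. The key observation is that the gradient of the phase $t\mapsto\langle(\xi,\eta),T^qW(t)\rangle$ equals $(DT^q)^*(\xi,\eta)$ paired against the tangent vectors of $W$, which lie in the stable cone $C$. The hypothesis $(DT^q_x)^*(\xi,\eta)\in C^*$ means this covector is dominated by its horizontal component. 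Is its pairing with the cone $C$ then bounded below? It is not automatic, because the vertical components of $C$ and $C^*$ are complementary. So we must be careful: we choose the leaves $W$ inside $\Pcal_*(\bc\ba)\times\R^d$ to be exactly tilted so that the phase restricted to $W$ has gradient of size $\gtrsim |(DT^q)^*(\xi,\eta)|$. We do this by using the freedom $c_1=\alpha$ to pick $\pi\circ W$ linear with slope aligned to the covector. Because $(DT^q)^*$ expands the horizontal part by $\ell^q$, we get $|(DT^q)^*(\xi,\eta)|\ge c_q(\xi^2+\norm{\eta}^2)^{1/2}$, with a constant depending on $q$.

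Each integration by parts moves one derivative onto $u$ (in the direction of the leaf, hence a combination of $\partial_x,\partial_{\underline y}$ with bounded coefficients) or onto $\psi$, the cutoff $\chi$ and the leaf parametrisation. Each such step gains a factor $|\nabla\text{phase}|^{-1}\lesssim C_q(\xi^2+\norm{\eta}^2)^{-1/2}$. After $\rho$ steps the leaf integral is a sum of terms of the form $\int\tilde\psi\,\partial_x^{\alpha}\partial_{\underline y}^{\underline\beta}u(W(t))\,dt$ with $\alpha+|\underline\beta|\le\rho$. Here $\tilde\psi$ is $C^{\rho}$ with norm controlled by $C(q,\chi)$, and $\rho\le r-1$ ensures enough smoothness. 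Each such term is bounded by $C(q,\chi)\|u\|_\rho\le C(q,\chi)\|\phi\|_\rho$, using the definition of $\|\cdot\|_\rho$. Integrating over $x$ in the compact support completes the bound.

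The main obstacle is the lower bound on the phase gradient along the leaves. We must produce admissible leaves $W\in\cW$ (with $C^r$ bounds $c_i$) along which the covector $(DT^q)^*(\xi,\eta)\in C^*$ has a non-degenerate derivative, uniformly on $\Pcal_*(\bc\ba)$. If straight tilted leaves turn out to be insufficient, we would try a second option: push the leaves forward by $T^q$ and use the cone invariance $(DT^N)^*C^*\subset C^*$ together with $q\ge N$. This would let us estimate the phase on $T^q W$, where $T^q W$ is an almost-vertical manifold, instead of on $W$ itself.
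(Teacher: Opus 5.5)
\paragraph{The gap.} The step that fails is your claim that, after integrating by parts along a leaf, the weight $\tilde\psi$ has $C^{\rho}$ norm controlled by $C(q,\chi)$. Write $\Theta(\mathbf x)=\langle(\xi,\eta),T^q\mathbf x\rangle$ for the phase. The factor $e^{-2\pi i\Theta\circ W}$ never leaves the integrand, so it is part of the weight that multiplies $\partial_x^\alpha\partial_{\underline y}^{\underline\beta}u$. You chose $W$ precisely so that $|\nabla_W\Theta|\gtrsim_q|(\xi,\eta)|$, so the $C^{j}$ norm of this factor on $W$ is of order $(\xi^2+\norm{\eta}^2)^{j/2}$. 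The norm $\norm{\cdot}_\rho$ only controls $\int\psi\,\partial_x^\alpha\partial_{\underline y}^{\underline\beta}u$ when $\norm{\psi}_{C^{\alpha+|\underline\beta|}}\leq1$. Hence a term carrying $j$ derivatives of $u$ is bounded only by $C_q(\xi^2+\norm{\eta}^2)^{(j-\rho)/2}\norm{u}_\rho$. The Leibniz term in which all $\rho$ derivatives fall on $u$ always appears, and it gives no decay at all. So making the phase non-stationary along the leaves, which you single out as the main obstacle, is the wrong target.

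A symptom is that your argument never genuinely uses $(DT^q)^*(\xi,\eta)\in C^*$. Invertibility of $DT^q$ already gives $|(DT^q)^*(\xi,\eta)|\geq c_q|(\xi,\eta)|$, and a suitably tilted leaf always sees a nonvanishing restricted gradient. Yet the estimate is false without the hypothesis. A smooth density on a horizontal segment lies in $\cB_\rho$. Its image under $P^q$ is a smooth density on a graph $y=g(x)$. For $\rho\geq1$, its Fourier transform does not decay like $(\xi^2+\norm{\eta}^2)^{-\rho/2}$ along directions with $\xi+\eta\cdot g'(x)=0$ for some $x$ in the support. These are exactly the directions where the horizontal component of $(DT^q)^*(\xi,\eta)$ vanishes.

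\paragraph{The missing idea.} The paper uses leaves on which the phase is \emph{constant}: the level sets $\{\Theta=\mathrm{const}\}$, i.e.\ the $T^q$-preimages in $\Pcal_*(\bc\ba)\times\R^d$ of the pieces of hyperplanes orthogonal to $(\xi,\eta)$. The hypothesis is exactly what makes these leaves admissible. If $(\xi',\eta')\in C^*$ and $\xi'u+\eta'\cdot v=0$, then $|\xi'||u|\leq\norm{\eta'}\norm{v}\leq\alpha^{-1}|\xi'|\norm{v}$. So the kernel of $(DT^q)^*(\xi,\eta)$ lies in $C$. The degeneracy you noticed (``not automatic'') is therefore the key fact, not an obstruction.

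The decay then comes from integrating by parts in the ambient space, transversally to these leaves. This is legitimate because $\norm{\cdot}_\rho$ controls all mixed derivatives along stable leaves, including the unstable one $\partial_x$. Concretely, the paper proceeds in three steps. First, it bounds $(\xi^2+\norm{\eta}^2)^{\rho/2}|\mathcal F(P^q(\chi u))(\xi,\eta)|$ by $|\mathcal F(\partial^\rho P^q(\chi u))(\xi,\eta)|$, where $\partial$ is the coordinate derivative dual to the dominant component of $(\xi,\eta)$. Second, it writes this Fourier coefficient as an integral in $t$ of a unimodular factor times $\int_{B_t}\partial^\rho P^q(\chi u)$ over the hyperplane pieces $B_t$. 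Third, it pulls each $B_t$ back to an admissible leaf. There the integrand is a combination of derivatives of $u$ of order at most $\rho$, with non-oscillating coefficients depending only on $q$ and $\chi$.

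In your dual formulation the same repair reads as follows. Integrate by parts in $\mathbf x$ along $\partial_x$; here $\partial_x\Theta$ is the horizontal component of $(DT^q)^*(\xi,\eta)$, so $|\partial_x\Theta|\gtrsim_q|(\xi,\eta)|$ by the cone condition. Only afterwards disintegrate along $\{\Theta=\mathrm{const}\}$.
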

\begin{proof}
Let $(\xi,\eta)\in\Z\times\R^d\setminus\{(0,0)\}$ be a covector satisfying the assumption of the lemma. By applying the properties of the Fourier transform, the left hand side is bounded, up to a multiplicative constant $C,$ by 
\begin{equation}
\label{eq:absolute_value}
    \left|\cF(\partial^\rho P^q(\chi\phi))(\xi,\eta) \right|,
\end{equation}
where $\partial$ denotes the partial derivative dual to the dominating term of the sum $(\xi^2+\norm{\eta}^2).$ In particular, if $|\xi|>\max_{i=1,\dots,d}\{|\eta_i|\},$ then $\partial=\partial_x;$ otherwise, $\partial=\partial_{y_j},$ where $|\eta_j|>\max\{|\xi|,|\eta_j|,\ j=1,\dots d\}.$ The compact domain $D\cap \Pcal_*(\bc)\times\R^d$ can be covered by a 1-parameter family of balls $\cB=\{B_t\}_{t\in[0,1]}$, orthogonal to $(\xi,\eta)$ and whose radii are uniformly bounded from above.  By computing the Fourier transform with this parametrization, \eqref{eq:absolute_value} is bounded by $$ C\sup_{t\in[0,1]}\int_{\cD(B_t)}\partial^\rho P^q(\chi\phi)(B_t(s))ds.$$
For any $B_t,$ there exists a unique preimage $\bar B_t\in\cW$ contained in $\Pcal_*(\bc\ba)\times\R^d.$ Hence, after a change of coordinates $$\int_{\cD(B_t)}\partial^\rho P^q(\chi\phi)(B_t(s))ds=\int_{\cD(\bar B_t)}\partial^\rho\phi(B_t(s))ds.$$ Since, for any tangent vector $(u,v)$ to $B_t,$ $0=\langle(u,v),(\xi,\eta)\rangle=\langle DT^{-q}(u,v),(DT^q)^*(\xi,\eta)\rangle,$ and $(DT^q)^*(\xi,\eta)\in C^*,$ we conclude that $DT^{-1}(u,v)\in C.$ Therefore, up to a reparameterization, $\bar B_t$ belongs to the set of admissible stable manifolds $\cW.$ Accordingly, the last integral is bounded by a constant multiple of $\norm{\phi}_\rho.$
  
\end{proof}

The next lemma is the link between the derivative dispersion condition (or any notion of transversality) and Fourier decay (and therefore Sobolev regularity) of the density of the invariant measure.

\begin{lemma}
    Consider $(\xi,\eta) \in \mathbb{Z}\times\R^d\setminus\{(0,0)\}$ and $\ba \in G^q_\bc(\xi,\eta)$. Then $(DT^q_x)^*(\xi,\eta) \in C^*$.
\end{lemma}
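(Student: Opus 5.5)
Since $T^q(x,y)=(\tau^q x, A^q y+\dots)$, the plan is to compute the adjoint $(DT^q)^*$ explicitly at points of the cylinder $\Pcal_*(\bc\ba)\times\R^d$, i.e.\ at points $\mathbf x=(z,y)$ with $z=\tau^{-q}_{\ba}(x)$ for $x\in\Pcal_*(\bc)$. I will then read off that the $\xi$-component of the pulled-back covector is large precisely because $\ba$ is a good word.

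\textbf{Step 1: compute the adjoint.} Differentiating $T^q(z,y)=\bigl(\ell^q z,\ A^q y+\sum_{j=0}^{q-1}A^{q-1-j}\phi(\tau^j z)\bigr)$ gives
\[
DT^q_{(z,y)}=\begin{pmatrix}\ell^q & 0\\ v & A^q\end{pmatrix},
\qquad
v=\frac{d}{dz}\sum_{j=0}^{q-1}A^{q-1-j}\phi(\tau^j z).
\]
Writing $x=\tau^q z$, the graph identity $T^q(\tau^{-q}_{\ba}(x),0)=(x,S_q(x,\ba))$ yields $\frac{d}{dx}S_q(x,\ba)=\ell^{-q}v$, so $v=\ell^q S_q'(x,\ba)$. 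The same holds with $S_q$ replaced by the branch $S_\bc(\cdot,\ba)$ on $\Pcal_*(\bc)$, up to the tail correction discussed in Step 3. Hence
\[
(DT^q)^*(\xi,\eta)=\bigl(\ell^q\xi+\eta\cdot v,\ (A^q)^T\eta\bigr)=\bigl(\ell^q(\xi+\eta\cdot S_q'(x,\ba)),\ (A^q)^T\eta\bigr).
\]

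\textbf{Step 2: use goodness.} If $\ba\in G^q_\bc(\xi,\eta)$, then for every $x\in\Pcal_*(\bc)$ we have $|\xi+\eta\cdot S'_\bc(x,\ba)|\ge 2\alpha\ell^{-q}\norm{(A^q)^T\eta}$. Multiplying by $\ell^q$, the first component of the pulled-back covector has modulus at least $2\alpha\norm{(A^q)^T\eta}$. This is stronger than the cone condition $\norm{(A^q)^T\eta}\le \alpha^{-1}\cdot|\text{first component}|$ as long as $2\alpha\ge\alpha^{-1}$. I expect that the normalisation $c_1=\alpha$ and the definition of $C^*$ are meant to make this factor work, so Step 2 should require only bookkeeping of constants.

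\textbf{Step 3: reconcile $S_q$ with $S$ (main obstacle).} The definition of $B^q_\bc$ uses $S'_\bc(x,\ba)$ for the infinite or extended word, whereas $DT^q$ involves only the finite sum $S_q$. The difference is $\sum_{j>q}A^{j-1}\phi(\cdots)$, whose derivative is bounded by roughly $\ell^{-q}\norm{A^q}\alpha$ (by the derivative bound for $S_\bc$ applied after $q$ steps, i.e.\ $A^q$ times a tail with derivative $\le\alpha\ell^{-q}$). Pairing this with $\eta$ gives a contribution of order $\alpha\ell^{-q}\norm{(A^q)^T\eta}$, so it is absorbed by the factor $2$ in the definition of $B^q_\bc$. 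After the correction, the first component still has modulus at least $\alpha\ell^q\cdot\ell^{-q}\norm{(A^q)^T\eta}$ up to the constant, which gives membership in $C^*$. The delicate point is getting these constants to match exactly, and I would handle it by defining the tail bound through the estimate $\ell^k\|S^{(k)}_\bc\|\le\alpha$ stated earlier.

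Finally, $\xi\in\Z$, $\eta\in\R^d$, and $(\xi,\eta)\neq0$ ensure the pulled-back covector is nonzero because $DT^q$ is invertible. Positions $x\in\Pcal_*(\bc\ba)$ correspond exactly to $\tau^q x\in\Pcal_*(\bc)$ (up to the ordering convention for words), so the bound holds at every point required.
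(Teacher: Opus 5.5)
Your Steps 1 and 2 are exactly the paper's proof. You write $(DT^q)^*$ as the upper-triangular matrix with blocks $\ell^q$, $\ell^q S'_\bc(x,\ba)^T$, $(A^q)^T$, so the pulled-back covector is $\bigl(\ell^q(\xi+\eta\cdot S'_\bc(x,\ba)),(A^q)^T\eta\bigr)$, and you read membership in $C^*$ off the goodness inequality.

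There are two slips, neither fatal. In Step 2 no condition like $2\alpha\geq\alpha^{-1}$ is needed: $|\xi'|\geq 2\alpha\norm{\eta'}$ already gives $\norm{\eta'}\leq(2\alpha)^{-1}|\xi'|\leq\alpha^{-1}|\xi'|$ for every $\alpha>0$. Step 3 is superfluous, because the words in $G^q_\bc(\xi,\eta)$ lie in $\A^q$. Hence $S_\bc(\cdot,\ba)$ is already the (extended) finite sum $S_q(\cdot,\ba)$ that appears in $DT^q$ and in $P^q_\ba$, and there is no tail to absorb. Even if there were one, your estimate would only lower $2\alpha$ to $(2-\ell^{-1})\alpha\geq\alpha$, which still suffices.
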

\begin{proof}
Let $\ba\in G_\bc^q(\xi,\eta)$, then, by definition, for all $x\in\Pcal_*(\bc),$  
\begin{equation}
    \label{eq:transversality}
 \left|\xi+\eta\cdot S'_c(x,\ba)\right|
 \geq 2\alpha\ell^{-q} \norm{A^T\eta}.
\end{equation}
 A straightforward computation gives 
 \begin{equation*}
     (DT^q_x)^*=\left(\begin{matrix}
     l^q&l^q(S'_\bc(x,\ba))^T\\\underline{0}&(A^q)^T
 \end{matrix}\right),
 \end{equation*}
Writing $$\left(\begin{array}{c}
      \xi'\\\eta'    
 \end{array}\right)=\left(\begin{matrix}
     l^q&l^q(S'_\bc(x,\ba))^T\\\underline{0}&(A^q)^T
 \end{matrix}\right)\left(\begin{array}{c}
      \xi\\\eta    
 \end{array}\right),$$
we obtain that $\norm{\eta'}\leq\alpha^{-1}|\xi'|,$ i.e., $(\xi',\eta')\in C^*$, if $$\norm{(A^q)^T\eta}\leq\alpha^{-1}l^q|\xi+S'_\bc(x,\ba)\cdot\eta|,$$ which follows from the condition \eqref{eq:transversality}.

\end{proof}

\section{Proof of Theorem \ref{thm:1}}\label{section: proof of theorem 1}

The core of the proof of Theorem \ref{thm:1} is the Lasota-Yorke type inquality proved in the next subsection. The theorem then follows by a simple functional analysis argument. 

\subsection{A Lasota-Yorke Inequality} 
The goal of this section is to prove another Lasota-Yorke inquality that relates the anisotropic Sobolev norm to the geometric anisotropic norm.
We denote by $ s_i^+:=\max\{s_i,0\},$ and $s_i^-:=\max\{-s_i,0\}. $ \begin{lemma} For every $u\in C^r(D)$, \[ \bigl\|\Pq_\ba(\chi_{\bc\ba}u)\bigr\|^2 \leq \frac{1}{\ell^q|\det A|^q} \prod_{i=1}^t \left( \|B_i^{-q}\|^{2s_i^+} \|B_i^q\|^{2s_i^-} \right) \|\chi_{\bc\ba}u\|^2. \] \end{lemma}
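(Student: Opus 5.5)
The plan is a direct computation: take the Fourier transform in the vertical variable, then make two changes of variables, one in the base and one in frequency space. Write $v:=\chi_{\bc\ba}u$. For the inverse branch of $T^q$ labelled by $\ba$ I will use the form dictated by the dynamics, namely $(x,y)\mapsto(\tau_\ba^{-q}(x),A^{-q}(y-S(x,\ba)))$. This gives
\[
P^q_\ba v(x,y)=\frac{1}{\ell^q|\det A|^q}\,v\bigl(\tau_\ba^{-q}(x),A^{-q}(y-S(x,\ba))\bigr).
\]
Since $\chi_{\bc\ba}$ is supported in the cylinder $\Pcal_*(\bc\ba)$, I will use the $C^r$ extension $S_{\bc}(\cdot,\ba)$ so that no discontinuity of the inverse branch enters. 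I expect this support bookkeeping to be the main (if minor) obstacle: one must check that $x\mapsto\tau_\ba^{-q}(x)$ is injective on the relevant set, so that the base change of variables below is legitimate and only one branch contributes.

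\emph{Step 1 (vertical Fourier transform).} Fix $x$ and put $x'=\tau_\ba^{-q}(x)$. The function $y\mapsto v(x',A^{-q}(y-S))$ is a translate of $v(x',\cdot)$ composed with the linear map $A^{-q}$. The substitution $y=A^qz+S$ then gives
\[
\mathcal F_y\bigl(P^q_\ba v\bigr)(x,\eta)=\frac{e^{-i\eta\cdot S(x,\ba)}}{\ell^q}\,\mathcal F_y v\bigl(x',(A^q)^T\eta\bigr).
\]
The determinant $|\det A|^q$ from the substitution exactly cancels the one in the normalisation. Hence $|\mathcal F_y P^q_\ba v(x,\eta)|^2=\ell^{-2q}|\mathcal F_yv(x',(A^T)^q\eta)|^2$.

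\emph{Step 2 (change of variables).} Insert this into $\|P^q_\ba v\|^2=\int\!\!\int|\mathcal F_yP^q_\ba v|^2\prod_i|\eta_i|^{2s_i}\,d\eta\,dx$ and make two substitutions.
\begin{itemize}
\item In the base, set $x'=\tau_\ba^{-q}(x)$. Then $dx=\ell^q\,dx'$, and $x'$ ranges over a subset of the support of $\chi_{\bc\ba}$ (here the injectivity from the opening paragraph is used).
\item In frequency, set $\zeta=(A^q)^T\eta$. Then $d\eta=|\det A|^{-q}\,d\zeta$.
\end{itemize}
Because the splitting $\R^d=\bigoplus E_i$ is $A^T$-invariant, $\pi_i$ commutes with $A^T$. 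So $\zeta_i=B_i^q\eta_i$, that is, $\eta_i=B_i^{-q}\zeta_i$. Collecting the factors $\ell^{-2q}\cdot\ell^q\cdot|\det A|^{-q}$ yields the prefactor $(\ell^q|\det A|^q)^{-1}$.

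\emph{Step 3 (comparing the weights).} I will bound each factor of the weight separately, using only that $\eta_i=B_i^{-q}\zeta_i$:
\begin{itemize}
\item If $s_i\ge0$, then $|\eta_i|\le\|B_i^{-q}\||\zeta_i|$, so $|\eta_i|^{2s_i}\le\|B_i^{-q}\|^{2s_i^+}|\zeta_i|^{2s_i}$.
\item If $s_i<0$, then $|\zeta_i|\le\|B_i^q\||\eta_i|$, so $|\eta_i|^{2s_i}\le\|B_i^q\|^{-2s_i}|\zeta_i|^{2s_i}=\|B_i^q\|^{2s_i^-}|\zeta_i|^{2s_i}$.
\end{itemize}
Taking the product over $i$ and integrating gives the claimed inequality with $\|\chi_{\bc\ba}u\|^2$ on the right. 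The integrand on the right is now over $x'$ in the support of $\chi_{\bc\ba}$, which is at most the full integral.
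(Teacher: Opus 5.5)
Your proposal is correct and follows the paper's proof essentially line by line. Both arguments take the vertical Fourier transform of the single branch, where the $|\det A|^q$ cancels, then make the base substitution $x'=\tau_\ba^{-q}(x)$ and the frequency substitution $\zeta=(A^q)^T\eta$ with $\eta_i=B_i^{-q}\zeta_i$, and finally compare the weights factor by factor according to the sign of $s_i$. Two points where you are slightly more careful than the paper: you use $A^{-q}(y-S)$ rather than the misprinted division by $|\det(A^q)|$ in the paper's transfer-operator formula, and you note that the base substitution gives only an inequality when the support of $\chi_{\bc\ba}u$ is not contained in $\Pcal(\ba)$, where the paper writes equalities.
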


\begin{proof} Set $v:=\chi_{\bc\ba}u$. Since \[ P_{\ba}^qv(x,y) = \frac{1}{|\det A|^q\ell^q} v\left( \tau_{\ba}^{-q}(x), A^{-q}\bigl(y-S_\ba(x)\bigr) \right), \] taking the Fourier transform in $y$ gives \[ \mathcal F_y(\Pq_\ba v)(x,\eta) = \ell^{-q} e^{-i\langle\eta,S_\ba(x)\rangle} \mathcal F_yv \bigl( \tau_{\ba}^{-q}(x), A^{T,q}\eta \bigr). \] 
Therefore, 
\begin{align*} 
\|\Pq_\ba v\|^2 &= \int_{\Sone} \int_{\R^d} \left| \mathcal F_y(\Pq_\ba v)(x,\eta) \right|^2 \prod_{i=1}^t|\eta_i|^{2s_i} \,d\eta\,dx \\ &= \ell^{-2q} \int_{\Sone} \int_{\R^d} \left| \mathcal F_yv \bigl( \tau_{\ba}^{-q}(x), (A^{T,q}\eta \bigr) \right|^2 \prod_{i=1}^t|\eta_i|^{2s_i} \,d\eta\,dx \\ &= \ell^{-q} \int_{\Sone} \int_{\R^d} \left| \mathcal F_yv \bigl( x', (A^{T,q}\eta \bigr) \right|^2 \prod_{i=1}^t|\eta_i|^{2s_i} \,d\eta\,dx' \\ &= \frac{1}{\ell^q|\det A|^q} \int_{\Sone} \int_{\R^d} \left| \mathcal F_yv(x',\zeta) \right|^2 \prod_{i=1}^t |B_i^{-q}\zeta_i|^{2s_i} \,d\zeta\,dx'. \end{align*} 

If $s_i\geq0$, then $ |B_i^{-q}\zeta_i| \leq \|B_i^{-q}\||\zeta_i|, $ and hence $ |B_i^{-q}\zeta_i|^{2s_i} \leq \|B_i^{-q}\|^{2s_i} |\zeta_i|^{2s_i}. $ If $s_i<0$, then $ |B_i^{-q}\zeta_i| \geq \|B_i^q\|^{-1} |\zeta_i|, $ and hence $ |B_i^{-q}\zeta_i|^{2s_i} \leq \|B_i^q\|^{-2s_i} |\zeta_i|^{2s_i}. $ 
Therefore, 
\begin{align*} \|\Pq_\ba v\|^2 &\leq \frac{1}{\ell^q|\det A|^q} \prod_{i=1}^t \left( \|B_i^{-q}\|^{2s_i^+} \|B_i^q\|^{2s_i^-} \right) \int_{\Sone} \int_{\R^d} \left| \mathcal F_yv(x',\zeta) \right|^2 \prod_{i=1}^t |\zeta_i|^{2s_i} \,d\zeta\,dx' \\ &= \frac{1}{\ell^q|\det A|^q} \prod_{i=1}^t \left( \|B_i^{-q}\|^{2s_i^+} \|B_i^q\|^{2s_i^-} \right) \|v\|^2 \\ &= \frac{1}{\ell^q|\det A|^q} \prod_{i=1}^t \left( \|B_i^{-q}\|^{2s_i^+} \|B_i^q\|^{2s_i^-} \right) \|\chi_{\bc\ba}u\|^2. 
\end{align*}  
\end{proof}

\begin{lemma}
\label{lemma: estimate_norm}
For $s_i>-d_i/2$ and $ \rho>S+\frac{d+1}{2},$ we have \[ \sum_{\xi\in\mathbb Z} \int_{\mathbb R^d} (1+\xi^2+|\eta|^2)^{-\rho} \prod_{i=1}^t|\eta_i|^{2s_i}\,d\eta <\infty. \] \end{lemma}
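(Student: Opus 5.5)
We split the sum-integral into a local part near the singular set of the weight and a tail part at infinity, and treat each separately.

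Local integrability near the coordinate subspaces. On the bounded region $|\eta|\leq 1$ we bound $(1+\xi^2+|\eta|^2)^{-\rho}\leq (1+\xi^2)^{-\rho}$, which is summable in $\xi$ as soon as $\rho>1/2$. It then suffices that
\[
\int_{|\eta|\leq1}\prod_{i=1}^t|\eta_i|^{2s_i}\,d\eta<\infty .
\]
Since the projections $\pi_i$ give a linear isomorphism $\R^d\cong E_1\times\cdots\times E_t$, we change variables to $(\eta_1,\dots,\eta_t)$ at the cost of a constant Jacobian. The unit ball is contained in a product of balls $\{|\eta_i|\leq R\}$, so the integral factorises as
\[
\prod_i\int_{|\eta_i|\leq R}|\eta_i|^{2s_i}\,d\eta_i .
\]
Each factor is finite exactly because $2s_i>-d_i$, by polar coordinates in $E_i$. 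This is the content of the earlier remark on local integrability of the weight.

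Tail region $|\eta|\geq1$. First we sum over $\xi$. Comparing with the integral in $\xi$ and substituting $\xi=(1+|\eta|^2)^{1/2}u$ gives
\[
\sum_{\xi\in\Z}(1+\xi^2+|\eta|^2)^{-\rho}\leq C(1+|\eta|^2)^{-\rho+1/2},
\]
valid for $\rho>1/2$; the $\xi=0$ term is of the same order, so it causes no trouble. It remains to show
\[
\int_{|\eta|\geq1}(1+|\eta|^2)^{-\rho+1/2}\prod_i|\eta_i|^{2s_i}\,d\eta<\infty .
\]
We decompose into dyadic shells $2^k\leq|\eta|<2^{k+1}$ and rescale $\eta=2^k\zeta$. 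Homogeneity gives
\[
\prod_i|\eta_i|^{2s_i}=2^{2kS}\prod_i|\zeta_i|^{2s_i},\qquad d\eta=2^{kd}\,d\zeta .
\]
The integral of the weight over the fixed shell $1\leq|\zeta|<2$ is finite by the local step. The shell contributes $\lesssim 2^{k(-2\rho+1+2S+d)}$, and the geometric series in $k$ converges iff $\rho>S+\frac{d+1}{2}$, which is exactly the hypothesis of Lemma \ref{lemma: estimate_norm}.

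The only delicate point is the tail estimate. A naive approach bounds each $|\eta_i|^{2s_i}$ by $|\eta|^{2s_i}$, which is only valid for $s_i\geq 0$; for negative $s_i$ the weight blows up on the subspaces $\{\eta_i=0\}$, far out as well as near the origin. The dyadic rescaling avoids this by combining homogeneity with the local integrability, so the singularities are controlled uniformly on every shell.

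The condition $\rho>1/2$ used in the $\xi$-sum is implied by the hypothesis. Indeed $S>-d/2$, since $S=\sum_i s_i>-\sum_i d_i/2$, and therefore $\rho>S+\frac{d+1}{2}>\frac12$.
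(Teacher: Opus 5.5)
Your proof is correct and follows essentially the same route as the paper: you sum over $\xi$ first to get $(1+|\eta|^2)^{1/2-\rho}$, then use the $2S$-homogeneity of the weight together with its local integrability, with your dyadic shells playing the role of the paper's polar-coordinate integral $\int_0^\infty r^{d-1+2S}(1+r^2)^{1/2-\rho}\,dr$. Your factorisation through $E_1\times\cdots\times E_t$ also justifies the local integrability of the weight, which the paper only asserts in the form ``its restriction to the unit sphere is integrable''.
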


\begin{proof} 
Since $ \rho>S+\frac{d+1}{2}>\frac12, $ we have \[ \sum_{\xi\in\mathbb Z} (1+\xi^2+r^2)^{-\rho} \lesssim (1+r^2)^{\frac12-\rho}. \] Moreover, the function $ \prod_{i=1}^t|\eta_i|^{2s_i}$ is homogeneous of degree $2S$ in $\eta$. Since $ s_i>-\frac{d_i}{2}, $ its restriction to the unit sphere is integrable. Hence, using polar coordinates in $\mathbb R^d$, 
\begin{align*} & \sum_{\xi\in\mathbb Z} \int_{\mathbb R^d} (1+\xi^2+|\eta|^2)^{-\rho}\prod_{i=1}^t|\eta_i|^{2s_i} \,d\eta \\ &\qquad\lesssim \int_0^\infty r^{d-1+2S} (1+r^2)^{\frac12-\rho} \,dr < \infty. 
\end{align*}
\end{proof} 

\begin{proposition}
\label{prop:LY}
For $ -d_i/2<s_i<0,$ and an integer $ \rho>S+\frac{d+1}{2},$ there exists a constant $C(q)$ such that, for all $u\in C^r(D)$ with $r\geq\rho+1$, \[ \|P^qu\|^2 \leq \frac{36M(q)} {\ell^q|\det A|^q} \prod_{i=1}^t \left( \|B_i^{-q}\|^{2s_i^+} \|B_i^q\|^{2s_i^-} \right) \|u\|^2 + C(q)\|u\|_{\rho}^2. \] 
\end{proposition}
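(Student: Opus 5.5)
Following the strategy of \cite{tsujii-01,avila-gouezel-tsujii-06}, the plan is to localize in the base, split frequency space according to the good/bad branch sets, and use Cauchy--Schwarz only over the bad branches. Fix $p$ large enough that $M(p,q)=M(q)$. Take a smooth partition of unity $\{\chi_\bc\}_{\bc\in\A^p}$ subordinate to the intervals $\Pcal_*(\bc)$, so that each point lies in at most $3$ supports. Set $\chi_{\bc\ba}:=\chi_\bc\circ\tau^q$ restricted to the branch $\ba$, so that $P^q u=\sum_{\bc}\sum_{\ba\in\A^q}P^q_\ba(\chi_{\bc\ba}u)$, where $P^q_\ba(\chi_{\bc\ba} u)$ is supported in $\Pcal_*(\bc)\times\R^d$.

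We work on the Fourier side in both variables and write $\|P^qu\|^2=\sum_{\xi}\int|\widehat{P^qu}(\xi,\eta)|^2\prod_i|\eta_i|^{2s_i}d\eta$. Fix $(\xi,\eta)$ and split $\A^q=B^q_\bc(\xi,\eta)\cup G^q_\bc(\xi,\eta)$ for each $\bc$. Using $|x+y|^2\le 2|x|^2+2|y|^2$ together with the bounded overlap of the $\chi_\bc$, we get
\[
|\widehat{P^qu}(\xi,\eta)|^2\le 6\sum_\bc\Bigl|\sum_{\ba\in B^q_\bc}\widehat{P^q_\ba(\chi_{\bc\ba}u)}\Bigr|^2+6\sum_\bc\Bigl|\sum_{\ba\in G^q_\bc}\widehat{P^q_\ba(\chi_{\bc\ba}u)}\Bigr|^2 .
\]
The constant $36$ should come from this factor $6$ combined with a second overlap factor when the pieces are summed back. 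For the bad part, Cauchy--Schwarz gives $|\sum_{B}\cdot|^2\le M(q)\sum_{\ba\in B}|\cdot|^2\le M(q)\sum_{\ba\in\A^q}|\cdot|^2$, since $\#B^q_\bc(\xi,\eta)\le M(p,q)=M(q)$. After integrating against the weight, the preceding Lemma gives $\|P^q_\ba(\chi_{\bc\ba}u)\|^2\le \frac{1}{\ell^q|\det A|^q}\prod_i(\cdots)\|\chi_{\bc\ba}u\|^2$. It then remains to resum: the functions $\chi_{\bc\ba}u$ over $(\bc,\ba)\in\A^{p}\times\A^q$ are the localizations of $u$ to the cylinders $\Pcal_*(\bc\ba)$, which again overlap at most three times, so $\sum_{\bc,\ba}\|\chi_{\bc\ba}u\|^2\le 6\|u\|^2$ up to the standard commutator issue discussed below. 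This yields the main term.

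For the good part, the last Lemma of Section \ref{section: anisotropic spaces} shows that $(DT^q_x)^*(\xi,\eta)\in C^*$ for $\ba\in G^q_\bc(\xi,\eta)$. Lemma \ref{Fdecay} then gives $|\widehat{P^q_\ba(\chi_{\bc\ba}u)}(\xi,\eta)|\le C(q)(1+\xi^2+|\eta|^2)^{-\rho/2}\|u\|_\rho$; the $(\xi,\eta)=(0,0)$ term is harmless. There are finitely many pairs $(\bc,\ba)$, so squaring and summing costs only a $q$-dependent constant. Integrating against $\prod|\eta_i|^{2s_i}$, Lemma \ref{lemma: estimate_norm} (which is where $\rho>S+\frac{d+1}2$ is needed) gives $C(q)\|u\|_\rho^2$.

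I expect the main obstacle to be the resummation $\sum_{\bc,\ba}\|\chi_{\bc\ba}u\|^2\lesssim\|u\|^2$ in the mixed norm. That norm is $L^2$ in $x$ and Fourier-weighted only in $y$, and since the cutoffs depend on $x$ alone, this is in fact straightforward: multiplication by $\chi(x)$ commutes with $\mathcal F_y$, so $\sum\|\chi_{\bc\ba}u\|^2=\int\sum|\chi_{\bc\ba}(x)|^2\int|\mathcal F_yu|^2w\,d\eta\,dx\le 3\|u\|^2$ when $0\le\chi\le1$. The genuinely delicate point is the interplay between the full Fourier transform in $(\xi,\eta)$ used to split into good and bad sets and the Plancherel identity. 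Since $B^q_\bc(\xi,\eta)$ depends on $\xi$, the bad-part bound must be carried out pointwise in $(\xi,\eta)$ before the Preceding Lemma is applied. I would handle this by bounding $\sum_{\ba}|\widehat{P^q_\ba v_{\bc\ba}}(\xi,\eta)|^2$ and using Plancherel in $x$ only after summing over all $\ba$. This is legitimate because the weight does not depend on $\xi$.
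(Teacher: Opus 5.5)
Your proposal is the paper's proof in all essentials. It uses the same cutoffs $\chi_\bc$ and lifted cutoffs $\chi_{\bc\ba}$ and the same pointwise splitting $\A^q=B^q_\bc(\xi,\eta)\cup G^q_\bc(\xi,\eta)$. On the bad set it applies Cauchy--Schwarz with $\#B^q_\bc(\xi,\eta)\le M(p,q)=M(q)$, then the branch-operator lemma and resummation. On the good set it uses the cone lemma, Lemma~\ref{Fdecay}, and the integrability of $(1+\xi^2+|\eta|^2)^{-\rho}\prod_i|\eta_i|^{2s_i}$.

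\textbf{The misstated step.} Your first display is false as a pointwise inequality in $(\xi,\eta)$. Bounded overlap of the supports of the $\chi_\bc$ in $x$ gives no pointwise control on Fourier coefficients.
\begin{itemize}
\item Counterexample: at $(\xi,\eta)=(0,0)$ we have $B^q_\bc(0,0)=\emptyset$ for every $\bc$, so your display reads $\bigl(\int P^qu\bigr)^2\le 6\sum_\bc\bigl(\int\chi_\bc P^qu\bigr)^2$. Take $u\ge 0$ with constant $x$-marginal. Since each $\chi_\bc$ has integral at most $3\ell^{-p}$, the right side is at most $18\ell^{-p}\bigl(\int P^qu\bigr)^2$, so the inequality fails once $\ell^p>18$.
\item Fix: spend the overlap factor in the mixed norm before passing to the full Fourier transform, as in \eqref{P1}: $\|P^qu\|^2\le 3\sum_\bc\|\chi_\bc P^qu\|^2$. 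This holds because $\chi_\bc$ depends only on $x$ and commutes with $\mathcal F_y$, which is exactly the observation in your last paragraph.
\item Only then write $\|\chi_\bc P^qu\|^2=\sum_{\xi}\int\bigl|\sum_{\ba}\widehat{P^q_\ba(\chi_{\bc\ba}u)}(\xi,\eta)\bigr|^2\prod_i|\eta_i|^{2s_i}\,d\eta$ and split pointwise with the $\bc$-dependent sets.
\end{itemize}
Everything after your display is used only in integrated form, so the rest of your argument goes through unchanged.

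\textbf{Constant.} The resummation actually gives $\sum_{\bc,\ba}\|\chi_{\bc\ba}u\|^2\le\|u\|^2$, because $\chi_{\bc\ba}^2\le\chi_{\bc\ba}$ and the $\chi_{\bc\ba}$ sum to $1$. So the argument produces the constant $6$, which a fortiori gives the stated $36$.
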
 

\begin{proof} Fix $q,p\geq N$. Let $\{\chi_\bc:\Sone\to[0,1],\ \bc\in\mathcal A^p\}$ be a $C^\infty$ partition of unity subordinate to the covering $\{\operatorname{Int}\Pcal^*(\bc),\ \bc\in\mathcal A^p\}$. For $\ba\in\mathcal A^q$, define $\chi_{\bc\ba}$ by \[ \chi_{\bc\ba} \bigl( \tau_{\bc\ba}^{-q}(x) \bigr) = \chi_\bc(x), \qquad x\in\Pcal^*(\bc), \] and extend it by zero elsewhere. 

Then $\{\chi_{\bc\ba},\ (\ba,\bc)\in \mathcal A^q\times\mathcal A^p\}$ is again a $C^\infty$ partition of unity. By construction, \[ \chi_\bc P^qu = \sum_{\ba\in\mathcal A^q} P_\ba^q(\chi_{\bc\ba}u). \] 
To simplify the notation we set $ u_{\ba,\bc} := \Pq_\ba(\chi_{\bc\ba}u). $ By the properties of the partition of the unity, (in particular every point $x \in \Sone$ belongs to the support of at most three elements of the partition), and the definition of the norm we get 

\begin{equation}
\label{P1}\|u\|^2 \leq 3 \sum_{\bc\in\mathcal A^p} \|\chi_\bc u\|^2 \end{equation} and \begin{equation}
\label{P2}\sum_{\ba\in\mathcal A^q} \sum_{\bc\in\mathcal A^p} \|\chi_{\bc\ba}u\|^2 \leq \|u\|^2. \end{equation} 
Hence,
\begin{align*} \|P^qu\|^2 &\leq 3 \sum_{\bc\in\mathcal A^p} \left\| \sum_{\ba\in\mathcal A^q} u_{\ba,\bc} \right\|^2 = 3 \sum_{\bc\in\mathcal A^p} \sum_{\xi\in\Z} \int_{\R^d} \left| \sum_{\ba\in\A^q} \widehat u_{\ba,\bc}(\xi,\eta) \right|^2 \prod_{i=1}^t|\eta_i|^{2s_i}\,d\eta. 
\end{align*} 

For every $\bc\in\mathcal A^p$ and $(\xi,\eta)\in\mathbb Z\times\mathbb R^d$, we write \[ \mathcal A^q = B_\bc(\xi,\eta) \cup G_\bc(\xi,\eta), \] and we obtain 

\begin{align*} 
\|P^qu\|^2 &\leq 3 \sum_{\bc\in\mathcal A^p} \sum_{\xi\in\Z} \int_{\R^d} \left| \sum_{\ba\in B_\bc(\xi,\eta)} \widehat u_{\ba,\bc}(\xi,\eta) + \sum_{\ba\in G_\bc(\xi,\eta)} \widehat u_{\ba,\bc}(\xi,\eta) \right|^2 \prod_{i=1}^t|\eta_i|^{2s_i}\,d\eta \\ &\leq 6 \sum_{\bc\in\mathcal A^p} \sum_{\xi\in\mathbb Z} \int_{\mathbb R^d} \#B_\bc(\xi,\eta) \sum_{\ba\in B_\bc(\xi,\eta)} |\widehat u_{\ba,\bc}(\xi,\eta)|^2 \prod_{i=1}^t|\eta_i|^{2s_i}\,d\eta \\ &\quad+ 6 \sum_{\bc\in\mathcal A^p} \sum_{\xi\in\mathbb Z} \int_{\mathbb R^d} \left( \sum_{\ba\in G_\bc(\xi,\eta)} |\widehat u_{\ba,\bc}(\xi,\eta)| \right)^2 \prod_{i=1}^t|\eta_i|^{2s_i} \,d\eta. 
\end{align*} 

We recall that \[ \#B_\bc(\xi,\eta) \leq M(p,q), \] and using Lemma~\ref{Fdecay} we get 
\begin{align*} \|P^qu\|^2 &\leq 6M(p,q) \sum_{\bc\in\mathcal A^p} \sum_{\xi\in\mathbb Z} \int_{\mathbb R^d} \sum_{\ba\in\mathcal A^q} |\widehat u_{\ba,\bc}(\xi,\eta)|^2 \prod_{i=1}^t|\eta_i|^{2s_i}\,d\eta \\ &\quad+ C(q,p)\|u\|_{\rho}^{2} \sum_{\xi\in\mathbb Z} \int_{\mathbb R^d} (1+\xi^2+|\eta|^2)^{-\rho}\prod_{i=1}^t|\eta_i|^{2s_i} \,d\eta \\ &= 6M(p,q) \sum_{\ba\in\mathcal A^q} \sum_{\bc\in\mathcal A^p} \|u_{\ba,\bc}\|^2 \\ &\quad+ C(q,p)\|u\|_{\rho}^{2} \sum_{\xi\in\mathbb Z} \int_{\mathbb R^d} (1+\xi^2+|\eta|^2)^{-\rho}\prod_{i=1}^t|\eta_i|^{2s_i} \,d\eta. 
\end{align*} 
By Lemma \ref{lemma: estimate_norm}, \eqref{P1} and \eqref{P2}
\begin{align*} \|P^qu\|^2 &\leq 6M(p,q) \sum_{\ba\in\mathcal A^q} \sum_{\bc\in\mathcal A^p} \|u_{\ba,\bc}\|^2 + C(q,p)\|u\|_{\rho}^{2} \\ &\leq \frac{6M(p,q)} {\ell^q|\det A|^q} \prod_{i=1}^t \left( \|B_i^{-q}\|^{2s_i^+} \|B_i^q\|^{2s_i^-} \right) \sum_{\ba\in\mathcal A^q} \sum_{\bc\in\mathcal A^p} \|\chi_{\bc\ba}u\|^2 + C(q,p)\|u\|{\rho}^{\,2} \\ &\leq \frac{36M(p,q)} {\ell^q|\det A|^q} \prod_{i=1}^t \left( \|B_i^{-q}\|^{2s_i^+} \|B_i^q\|^{2s_i^-} \right) \|u\|^2 + C(q,p)\|u\|{\rho}^{\,2}. 
\end{align*} 
Finally, for a fixed value of $q$, choose $p=p(q)$ sufficiently large so that \[ M(p,q)=M(q). \]  We obtain \[ \|P^qu\|^2 \leq \frac{36M(q)} {\ell^q|\det A|^q} \prod_{i=1}^t \left( \|B_i^{-q}\|^{2s_i^+} \|B_i^q\|^{2s_i^-} \right) \|u\|^2 + C(q)\|u\|_{\rho}^{\,2}, \] that is the wanted inequality. 

\end{proof}

Let $m_i$ be the maximal size of the Jordan block associated with the eigenvalue $\lambda_i$. If $\lambda_i$ is non real, the Jordan blocks are understood in the complexification of $B_i=A^T_{\mid E_i}$. 

\begin{lemma}
\label{lemma:upper_bound}
There exists a constant $K_{A,\bs}>0$ such that, for every $q\geq1$, \[ \prod_{i=1}^t |\lambda_i|^{-2s_iq} \leq \prod_{i=1}^t\left( \|B_i^{-q}\|^{2s_i^+} \|B_i^q\|^{2s_i^-} \right) \leq K_{A,\bs}q^k \prod_{i=1}^t |\lambda_i|^{-2s_iq}, \] where $ k = 2(d-1)\sum_{i=1}^t |s_i|. $
\end{lemma}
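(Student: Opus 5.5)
The plan is to reduce everything to two-sided estimates for the norms of powers of a single block $B_i=A^T|_{E_i}$, and then multiply over $i$. All eigenvalues of $B_i$ (over $\C$) have modulus $|\lambda_i|$, and all eigenvalues of $B_i^{-1}$ have modulus $|\lambda_i|^{-1}$.

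\emph{Step 1 (block estimate).} I will show that for every $q\geq1$
\[
|\lambda_i|^{q}\leq \|B_i^q\|\leq K_i\, q^{m_i-1}|\lambda_i|^{q},
\qquad
|\lambda_i|^{-q}\leq \|B_i^{-q}\|\leq K_i\, q^{m_i-1}|\lambda_i|^{-q}.
\]
The lower bounds are immediate from $\rho(M)\leq\|M\|$, applied to $M=B_i^{\pm q}$ (whose spectral radius is $|\lambda_i|^{\pm q}$). For the upper bounds, I complexify and conjugate $B_i$ to Jordan form $J=\Lambda+N$ with $\Lambda$ diagonal, $|\Lambda_{jj}|=|\lambda_i|$, and $N$ nilpotent of order $m_i$ commuting with $\Lambda$. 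Expanding binomially gives
\[
J^q=\sum_{j<m_i}\binom qj\Lambda^{q-j}N^j.
\]
Each term is bounded by $C q^{j}|\lambda_i|^{q-j}$, so $\|J^q\|\leq C q^{m_i-1}|\lambda_i|^q$. The same argument applies to $J^{-1}=\Lambda^{-1}(I+\Lambda^{-1}N)^{-1}$, which is again of the form diagonal plus commuting nilpotent of order $m_i$, with diagonal entries of modulus $|\lambda_i|^{-1}$. The conjugation constant and the real/complex identification only change the constants.

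\emph{Step 2 (raising to the exponents).} If $s_i\geq0$, the factor is $\|B_i^{-q}\|^{2s_i}$, which lies between $|\lambda_i|^{-2s_iq}$ and $K_i^{2s_i}q^{2s_i(m_i-1)}|\lambda_i|^{-2s_iq}$. If $s_i<0$, the factor is $\|B_i^q\|^{-2s_i}=\|B_i^q\|^{2|s_i|}$, which lies between $|\lambda_i|^{2|s_i|q}=|\lambda_i|^{-2s_iq}$ and $K_i^{2|s_i|}q^{2|s_i|(m_i-1)}|\lambda_i|^{-2s_iq}$. In both cases the power of $q$ is $q^{2|s_i|(m_i-1)}$.

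\emph{Step 3 (product).} Multiplying over $i$ gives the lower bound directly. For the upper bound I get
\[
K_{A,\bs}=\prod_i K_i^{2|s_i|},
\qquad
\text{polynomial factor } q^{\sum_i 2|s_i|(m_i-1)}.
\]
Since $m_i\leq d_i\leq d$, we have $m_i-1\leq d-1$, and as $q\geq1$ this yields $q^{\sum_i 2|s_i|(m_i-1)}\leq q^{2(d-1)\sum_i|s_i|}=q^k$.

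The only mildly delicate point is Step 1 for complex-conjugate pairs, where $E_i$ is a real invariant subspace. I will handle it by working on $E_i\otimes\C$, where the operator norm is comparable to the real one up to a constant. The Jordan-form bound there is standard, so I do not expect a genuine obstacle.
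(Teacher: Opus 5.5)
Your proposal is correct and follows essentially the same route as the paper: a Jordan-form binomial expansion giving $\|B_i^{\pm q}\|\le C_i q^{m_i-1}|\lambda_i|^{\pm q}$, the spectral-radius lower bound, then raising to the exponents $2s_i^{\pm}$, multiplying over $i$, and using $m_i\le d$ to pass to $q^k$. The only cosmetic difference is in the inverse powers: you write $J^{-1}$ as a diagonal part plus a commuting nilpotent part and reuse the forward estimate, whereas the paper expands $J^{-q}$ directly with the coefficients $\binom{q+r-1}{r}$.
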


\begin{proof}
We consider the complexification of the matrix $A$.
Each Jordan block $J_i$ has the
form
$J_i = \lambda_i I + N,$
and
$N^{m_i}=0.
$
Hence
\begin{align*}
J_i^q
&=
\sum_{r=0}^{m_i-1}
\binom{q}{r}\lambda_i^{q-r}N^r, \quad \quad \text{and} \quad \quad
J_i^{-q}=
\lambda_i^{-q}
\sum_{r=0}^{m_i-1}
(-1)^r
\binom{q+r-1}{r}
\lambda_i^{-r}N^r .
\end{align*}
For every fixed $0\leq r\leq m_i-1$, we have
$
\binom{q}{r}\leq q^r$
and
$\binom{q+r-1}{r}\leq q^r$.
Since $r\leq m_i-1$, it follows that
$
\|J_i^q\|
\leq
C_iq^{m_i-1}|\lambda_i|^q$ and $
\|J_i^{-q}\|
\leq
C_iq^{m_i-1}|\lambda_i|^{-q}
$,  for some $C_i\geq 1.$
Conjugating by $P_i$ and enlarging $C_i$, we obtain
\begin{equation*}
\|B_i^q\|
\leq
C_iq^{m_i-1}|\lambda_i|^q,
\qquad
\|B_i^{-q}\|
\leq
C_iq^{m_i-1}|\lambda_i|^{-q}.
\end{equation*}
Moreover, since the operator norm dominates the spectral radius,
$
\|B_i^q\|\geq |\lambda_i|^q$ and
$\|B_i^{-q}\|\geq |\lambda_i|^{-q}.$
Using
$s_i=s_i^+-s_i^-$ and $s_i^++s_i^-=|s_i|$, we obtain
\begin{equation*}
|\lambda_i|^{-2s_iq}
\leq
\|B_i^{-q}\|^{2s_i^+}\|B_i^q\|^{2s_i^-}
\leq
C_i^{2|s_i|}
q^{2|s_i|(m_i-1)}
|\lambda_i|^{-2s_iq}.
\end{equation*}
We multiply over $i$ and we get
\begin{equation*}
\prod_{i=1}^t |\lambda_i|^{-2s_iq}
\leq
\prod_{i=1}^t
\left(
\|B_i^{-q}\|^{2s_i^+}
\|B_i^q\|^{2s_i^-}
\right)
\leq
K_{A,\bs} q^k
\prod_{i=1}^t |\lambda_i|^{-2s_iq},
\end{equation*}
where
$
K_{A,\bs}
:=
\prod_{i=1}^t C_i^{2|s_i|}
$ and $
k
=
2\sum_{i=1}^t |s_i|(m_i-1).
$
Finally, since $m_i\leq d_i\leq d$, we replace $k$ by the larger exponent
$
k:=2(d-1)\sum_{i=1}^t |s_i|.
$
\end{proof}

As a consequence of the previous proposition and lemma, we obtain the following Lasota-Yorke inequality. 
\begin{corollary}
\label{corollary:LY_q}
There exists a constant $B_\bs>0$, depending only on $A$ and $\bs$, such that 
\[ \|P^qu\|^2 \leq \frac{ B_\bs q^k M(q) } { \left( \ell \prod_{i=1}^t |\lambda_i|^{d_i+2s_i} \right)^q } \|u\|^2 + C(q)\|u\|_{\rho}^{\,2}. \] 
\end{corollary}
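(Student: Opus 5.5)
This corollary follows by putting the upper bound of Lemma~\ref{lemma:upper_bound} into the Lasota--Yorke inequality of Proposition~\ref{prop:LY}, and then rewriting the constant in terms of the eigenvalue moduli.

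\emph{Step 1: bound the matrix factor.} Proposition~\ref{prop:LY} gives
\[
\|P^qu\|^2 \leq \frac{36M(q)}{\ell^q|\det A|^q}\prod_{i=1}^t\left(\|B_i^{-q}\|^{2s_i^+}\|B_i^q\|^{2s_i^-}\right)\|u\|^2 + C(q)\|u\|_\rho^2 .
\]
Lemma~\ref{lemma:upper_bound} bounds the product by $K_{A,\bs}q^k\prod_i|\lambda_i|^{-2s_iq}$, with $k=2(d-1)\sum_i|s_i|$. This yields the factor $36K_{A,\bs}\,q^kM(q)$ in front of $\|u\|^2$.

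\emph{Step 2: rewrite the determinant.} The determinant of $A$ equals that of $A^T$. Its modulus is the product of the moduli of all eigenvalues counted with algebraic multiplicity. Each $E_i$ is the real generalized eigenspace of $A^T$ for the eigenvalues of modulus $|\lambda_i|$ (a complex-conjugate pair counts twice in the real dimension $d_i$). Hence
\[
|\det A|=\prod_{i=1}^t|\lambda_i|^{d_i}.
\]
It follows that
\[
\ell^q|\det A|^q\prod_{i=1}^t|\lambda_i|^{2s_iq}=\left(\ell\prod_{i=1}^t|\lambda_i|^{d_i+2s_i}\right)^q .
\]

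\emph{Step 3: fix the constant.} Set $B_\bs:=36K_{A,\bs}$; this is the constant the paper refers to as \eqref{eq:constant}. It depends only on $A$ and $\bs$ through the $C_i$ and $s_i$, and not on $q$. The remainder term $C(q)\|u\|_\rho^2$ is carried over unchanged.

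\emph{Main obstacle: the range of $\bs$.} Proposition~\ref{prop:LY} is stated for $-d_i/2<s_i<0$. The corollary, however, is later used with $\bs=0$, and Remark~\ref{rem:monotonicity} even refers to $s_i\geq0$.

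The proof of Proposition~\ref{prop:LY} uses only three ingredients:
\begin{itemize}
\item the branch estimate for $\|P^q_\ba(\chi_{\bc\ba}u)\|$, which is already written for arbitrary signs via $s_i^\pm$;
\item the splitting into good and bad words;
\item Lemma~\ref{lemma: estimate_norm}, which holds for all $s_i>-d_i/2$.
\end{itemize}
So I would check that these go through verbatim for every $s_i>-d_i/2$, provided $\rho>S+\frac{d+1}{2}$. Under that check, the corollary holds in the same generality with the same constant $B_\bs$.
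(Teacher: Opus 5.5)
Your proposal is correct and follows the paper's own proof: the paper also just inserts the upper bound of Lemma~\ref{lemma:upper_bound} into Proposition~\ref{prop:LY} and sets $B_\bs=36K_{A,\bs}$, leaving implicit the identity $|\det A|=\prod_{i=1}^t|\lambda_i|^{d_i}$ that you spell out. Your remark on the range of $\bs$ is well taken. The proof of Proposition~\ref{prop:LY} uses only the sign-independent branch estimate, the good/bad splitting via Lemma~\ref{Fdecay}, and Lemma~\ref{lemma: estimate_norm}, so it extends verbatim to all $s_i>-d_i/2$ with $\rho>S+\frac{d+1}{2}$, which is exactly what the paper tacitly needs when it applies the result with $\bs=\mathbf 0$.
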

\begin{proof}
    The statement is a consequence of Proposition \ref{prop:LY} and Lemma \ref{lemma:upper_bound} once we set
    \begin{equation}
    \label{eq:constant}
            B_\bs=36 K_{A,\bs}=36\prod_{i=1}^tC_i^{|s_i|}.
    \end{equation}
\end{proof}

\subsection{Conclusion of the proof of Theorem \ref{thm:1}}
\begin{corollary}
\label{corollary:bounded_norm}
   If $B_\bs q^k M(q) < \left( \ell \prod_{i=1}^t |\lambda_i|^{d_i+2s_i} \right)^q $ for some $q\geq N$  we have that
    \begin{equation*}
        \sup_{n\geq0} \|P^n u\| < \infty,
    \end{equation*}
    for any $u\in C^r(D)$ and any $n\in\N$.
\end{corollary}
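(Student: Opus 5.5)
We combine the Lasota--Yorke inequality of Corollary~\ref{corollary:LY_q} with the uniform bound \eqref{eq:LY1} of Proposition~\ref{prop:LY_anisotropic}. Fix $q\geq N$ satisfying the hypothesis and set
\[
\theta:=\frac{B_\bs q^kM(q)}{\left(\ell\prod_{i=1}^t|\lambda_i|^{d_i+2s_i}\right)^q}<1 .
\]
Let $\rho$ be an integer with $\rho>S+\frac{d+1}{2}$ and $\rho\leq r-1$, as required by Proposition~\ref{prop:LY}. Corollary~\ref{corollary:LY_q} applied to $P^{jq}u$ (which lies in $C^r(D)$ whenever $u$ does, since $T^N(D)\subseteq D$ and $P$ preserves smoothness) gives
\[
\|P^{(j+1)q}u\|^2\leq \theta\|P^{jq}u\|^2+C(q)\|P^{jq}u\|_\rho^2 .
\]

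By \eqref{eq:LY1}, $\|P^{jq}u\|_\rho\leq C\|u\|_\rho$ uniformly in $j$. Setting $a_j:=\|P^{jq}u\|^2$, we obtain the recursion $a_{j+1}\leq\theta a_j+C(q)C^2\|u\|_\rho^2$. Iterating it yields
\[
a_j\leq \theta^j a_0+\frac{C(q)C^2}{1-\theta}\|u\|_\rho^2 ,
\]
so $\sup_j\|P^{jq}u\|<\infty$. Note that $a_0=\|u\|^2$ is finite for $u\in C^r(D)$ because the Sobolev weight is locally integrable and $\mathcal F_yu$ decays rapidly.

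To pass from multiples of $q$ to all $n$, write $n=jq+m$ with $0\leq m<q$ and apply the preceding argument to $v=P^mu$. This requires $\|P^mu\|$ and $\|P^mu\|_\rho$ to be finite for the finitely many $m<q$. The second bound follows from \eqref{eq:LY1}. For the first we have two options: the single-branch estimate (the lemma preceding Lemma~\ref{lemma: estimate_norm}) combined with the triangle inequality over branches, which gives $\|P^mu\|\leq C_m\|u\|$, or simply the fact that $P^mu\in C^r(D)$ is smooth and compactly supported. Taking the maximum over $m<q$ of the resulting bounds gives $\sup_{n\geq0}\|P^nu\|<\infty$.

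The main obstacle is keeping the orbit inside the class where Corollary~\ref{corollary:LY_q} applies. That corollary is stated for $u\in C^r(D)$, so we must check that $P^nu$ remains compactly supported in $D$ and $C^r$. Compact support follows from the forward invariance $T^N(D)\subseteq D$, possibly after shrinking supports. Smoothness needs care: a partition-of-unity cutoff across the discontinuity at $0$ of the inverse branches could in principle break the $C^r$ property. If it does, we will instead run the argument on the completion with respect to $\|\cdot\|+\|\cdot\|_\rho$, extending the inequality by density.
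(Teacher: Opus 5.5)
Your proposal is correct and follows essentially the same route as the paper. You iterate the Lasota--Yorke inequality of Corollary~\ref{corollary:LY_q} for $P^q$, bound the weak term uniformly through \eqref{eq:LY1}, and handle $n=aq+b$ by maximizing over $b<q$. Your squared-norm recursion also keeps the geometric-series constant $C(q)C^2/(1-\theta)$, which the paper's one-line chain drops.

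Your worry about smoothness along the orbit is unnecessary. $T$ is a $C^r$ $\ell$-to-one covering, so $P$ maps compactly supported $C^r$ functions to compactly supported $C^r$ functions, and the fallback to a completion is not needed.
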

\begin{proof}
As a consequence of Corollary \ref{corollary:LY_q}, there exists $\gamma\in(0,1)$ such that $$\norm{P^qu}\leq\gamma\norm{u}+C(q)\norm{u}_{\rho}$$
Let $n\in\N$ be a positive integer, we can write $n=aq+b,$ with $0\leq b\leq q-1.$ Then \begin{equation*}
    \begin{split}
        \norm{P^nu}=\norm{P^{aq}(P^bu)}\leq\gamma^a\norm{P^bu}+C(q)\norm{P^bu}_{\rho}\leq\max_{i=0,\dots,q-1}\norm{P^iu}+C(q)C\norm{u}_{\rho}<\infty,
    \end{split}
\end{equation*}
where we applied \eqref{eq:LY1} to bound the anisotropic norm. 
\end{proof}

\begin{lemma}
    If $B_\bs q^k M(q) < \left( \ell \prod_{i=1}^t |\lambda_i|^{d_i+2s_i} \right)^q $ for some $q\geq N$,
    the SRB measure belongs to the space $L^2(\Sone,\dot H^s_A)$
\end{lemma}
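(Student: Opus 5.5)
Under the hypothesis, Corollary~\ref{corollary:bounded_norm} gives, for every $u\in C^r(D)$, the bound $\sup_{n\ge0}\|P^n u\|<\infty$. We want to pass this bound to the SRB measure. The SRB measure is the weak limit of $P^n u_0$ (or of its Cesàro averages) for a suitable smooth initial density $u_0$ supported in $D$. A natural choice is $u_0 = \chi(x)\,\psi(y)$, with $\psi$ a smooth bump on $\{\|y\|\le\alpha\}$ of integral one and $\chi\equiv 1$, normalized to be a probability density. Indeed, $P^n u_0\,dm$ is $T^n_*(u_0\,dm)$. Since $T$ is a skew product over $\tau$ that contracts the fibres, $T^n_*(u_0\,dm)$ converges weakly to $\mu_\phi$. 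This follows from the explicit description $\mu_\phi=\Psi_*(m\times\nu)$: $T^n_*(u_0 m)$ is an average over words $\ba\in\A^n$ of measures concentrated within distance $O(\|A^n\|)$ of the graphs of $S_n(\cdot,\ba)$, and these graphs converge uniformly to those of $S(\cdot,\ba)$.

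We set $u_n := P^n u_0$. The space $L^2(\Sone,\dot H^\bs_A)$ is a weighted $L^2$ space: via $u\mapsto \widehat u(\xi,\eta)\prod_i|\eta_i|^{s_i}$ it is isometric to a closed subspace of $L^2(\Z\times\R^d)$, so it is a Hilbert space. By Corollary~\ref{corollary:bounded_norm} the sequence $u_n$ is bounded in this Hilbert space. Hence a subsequence $u_{n_j}$ converges weakly to some $w\in L^2(\Sone,\dot H^\bs_A)$, and by weak lower semicontinuity $\|w\|\le\sup_n\|u_n\|$.

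It remains to identify $w$ with $\mu_\phi$ as distributions. I will test against $g\in C_c^\infty(\Sone\times\R^d)$ whose partial Fourier transform vanishes near $\eta=0$ in the appropriate sense; more simply, against $g$ such that $\widehat g(\xi,\eta)\prod_i|\eta_i|^{-s_i}$ lies in $L^2$. When all $s_i<0$, the weight $\prod_i|\eta_i|^{-2s_i}$ is locally bounded and polynomially growing, so every Schwartz $g$ qualifies. In that case the pairing $u\mapsto\int u\,\bar g=\sum_\xi\int\widehat u\,\overline{\widehat g}$ is a continuous functional on $L^2(\Sone,\dot H^\bs_A)$, by Cauchy--Schwarz with the weights split as $\prod|\eta_i|^{s_i}\cdot\prod|\eta_i|^{-s_i}$. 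Therefore $\int u_{n_j}\bar g\to\langle w,g\rangle$. On the other hand, $\int u_{n_j}\bar g\to\int \bar g\,d\mu_\phi$ by weak convergence of measures, since $g$ is continuous and all measures are supported in the compact set $D$. Thus $w=\mu_\phi$ as distributions, and $\mu_\phi\in L^2(\Sone,\dot H^\bs_A)$.

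For $\bs=\mathbf 0$ the same argument works, with the plain $L^2$ pairing. For $s_i>0$ the weight $\prod_i|\eta_i|^{-2s_i}$ is singular, so one should test against $g$ with $\widehat g$ vanishing near the coordinate subspaces $\{\eta_i=0\}$. Such $g$ still determine a compactly supported distribution modulo the ambiguity that is natural in the homogeneous space, which is fine since $\mu_\phi$ is a finite measure whose transform is continuous. In the applications (Remark~\ref{rem:monotonicity}) one reduces to $s_i\le 0$ anyway.

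\textbf{Main obstacle.} I expect the delicate point to be the identification step, namely justifying that the homogeneous norm defines a Hilbert space in which the distributional limit agrees with the measure. This requires that the measure $\mu_\phi$ itself be an element of the completion, rather than merely having finite weighted Fourier integral. I would handle this by noting that $\widehat{\mu_\phi}$ is a bounded continuous function and that $\widehat w=\widehat{\mu_\phi}$ almost everywhere. The latter follows from the testing argument above, using the density of the admissible test functions in the weighted $L^2$ space. Together these show that $\sum_\xi\int|\widehat{\mu_{\phi}}|^2\prod|\eta_i|^{2s_i}<\infty$, which is exactly the meaning of $\mu_\phi\in L^2(\Sone,\dot H^\bs_A)$ used in Theorem~\ref{thm:3}.
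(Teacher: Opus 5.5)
Your argument is correct, but you pass the uniform bound of Corollary~\ref{corollary:bounded_norm} to the limit differently from the paper, which uses no Hilbert-space compactness. The characters $(x,y)\mapsto e^{-2\pi i(\xi x+\eta\cdot y)}$ are bounded and continuous, and all the measures $P^nh_0\,d\Leb$ live in a fixed compact set. So weak convergence to $\mu_\phi$ gives pointwise convergence $\widehat{P^nh_0}(\xi,\eta)\to\widehat{\mu_\phi}(\xi,\eta)$ for every $(\xi,\eta)$. Fatou's lemma with the nonnegative weight $\prod_i|\eta_i|^{2s_i}$ then yields directly
\[
\sum_{\xi\in\Z}\int_{\R^d}|\widehat{\mu_\phi}(\xi,\eta)|^2\prod_{i=1}^t|\eta_i|^{2s_i}\,d\eta\leq\liminf_{n\to\infty}\|P^nh_0\|^2<\infty .
\]
This is shorter and does not depend on the signs of the $s_i$. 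Your case distinction for $s_i>0$ and the identification step you flag as the main obstacle both disappear.

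In exchange, your route (weak compactness, then identifying the weak limit $w$ with $\mu_\phi$) exhibits $\mu_\phi$ as a genuine element of the completion, with $\|\mu_\phi\|\le\sup_n\|P^nu_0\|$. However, Theorem~\ref{thm:3} only uses finiteness of the weighted Fourier integral, which Fatou gives at once.

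One small correction to your identification step. You cannot appeal to density of test functions in the weighted (dual) space to conclude $\widehat w=\widehat{\mu_\phi}$ a.e. That would require $\widehat w-\widehat{\mu_\phi}$ to lie in the weighted $L^2$ space, which is exactly what you are trying to prove. What closes the argument is the following: $\widehat w-\widehat{\mu_\phi}$ is locally square integrable off the null set $\bigcup_i\{\eta_i=0\}$. It also annihilates every $\widehat g$ in your test class, which contains all smooth functions compactly supported off that set. Hence it vanishes a.e.

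Your self-contained proof of $T^n_*(u_0\,dm)\to\mu_\phi$ from $\mu_\phi=\Psi_*(m\times\nu)$ is a nice substitute for the paper's appeal to the classical theory.
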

\begin{proof} 
Consider $h_0\in C^r(D)$ with compact support such that $h_0\geq 0$ and $\int h_0 d\Leb=1.$ Then, by the classical theory of uniformly hyperbolic dynamical systems, the measure $T_*^n(h_0d\Leb)$ converges, in the weak sense, to the unique SRB measure $\mu_\phi,$
\begin{equation}
\label{eq:weak_convergence}
    \lim_{n\to\infty}\int_{\Sone\times\R^d} P^n h_0(x,y) \psi(x,y)dxdy=\mu_\phi(\psi)=\int_{\Sone\times\R^d}\psi(x,y)d\mu_\phi(x,y).
\end{equation}
Since the weak convergence of measures implies pointwise convergence of Fourier transforms, we have $$\lim_{n\to\infty}\cF(P^n h_0)(\xi,\eta)=\cF(\mu_\phi)(\xi,\eta),$$
for any $(\xi,\eta)\in\Z\times\R^d.$ In view of Corollary \ref{corollary:bounded_norm} and Fatou's Lemma, we obtain
\begin{equation*}
    \begin{split}
        &\sum_{\xi\in\Z}\int_{\R^d}|\cF(\mu_\phi)(\xi,\eta)|^2\prod_{i=1}^t|\eta_i|^{2s_i}d\eta=\sum_{\xi\in\Z}\int_{\R^d}\left|\liminf_{n\to\infty}\cF(P^nh_0)(\xi,\eta)\right|^2\prod_{i=1}^t|\eta_i|^{2s_i}d\eta\leq\\\leq& \liminf_{n\to\infty}\sum_{\xi\in\Z}\int_{\R^d}\left|\cF(P^n h_0)(\xi,\eta)\right|^2\prod_{i=1}^t|\eta_i|^{2s_i}d\eta\leq C
    \end{split}
\end{equation*}
which implies that $\mu_\phi\in L^2(\Sone,\dot H_A^\bs).$
\end{proof}

\section{Genericity of the $\pmb{D}$ Condition}\label{section: proof of genericity}
This section is devoted to the proof of Theorem \ref{genericity}: the prevalence of the derivative dispersion condition, in a measure theoretic sense. We follow the ideas of the proof of genericity of the transversality condition in \cite{tsujii-01}. We first prove that the graphs $S(x,\ba)$ are very sensitive to suitable perturbations of $\phi$, expressing this concept through a notion of Jacobian. Then we conclude with a probabilistic argument, following the exposition of \cite{avila-gouezel-tsujii-06}.

\subsection{Generic Families}
For $\phi\in C^2(\Sone,\R^d)$ and a finite collection of smooth maps
$\left\{\psi_i\right\}_{i\in\mathcal{I}} \subset C^\infty(\Sone,\R^d)$, we define the family of functions
\begin{equation}
\phi_\bt(x)=\phi(x)+\sum_{i\in\mathcal{I}}t_i\psi_i(x),
\end{equation}
where the parameter $\bt$ ranges in $\R^{\mathcal{I}}$.\newline
Given $A\in GL(d,\R)$, with $\rho(A)<1$, we consider the corresponding family of maps, from $\Sone\times\mathbb R^d$ to itself,
\begin{equation}
  T_\bt(x,y)=(\ell x, A y+\phi_\bt(x)), 
\end{equation}
together with the family of graphs
\begin{equation}
  S(x,\ba;\bt)=\sum_{i=1}^{n}A^{i-1}\phi_\bt([\ba]_i(x)),
\end{equation}
for any word $\ba\in\A^n$ of length $1\le n\le\infty$.\newline
For a point $x\in \Sone$ and a sequence $\sigma=(\ba_1,\ldots,\ba_k)$ of elements in $\A^\infty$, we
consider an affine map $G_{x,\sigma}:\mathbb R^{\mathcal{I}}\to\mathbb R^{dk}$ defined by
\begin{equation}
  G_{x,\sigma}(\bt)=
  \left(
    \frac{d}{dx}S(x,\ba_i;\bt))
  \right)_{i=1,2,\ldots,k}.
\end{equation}
For a linear map $L$ between vector spaces $V$ and $W$, of possibly different dimensions, we define the Jacobian of $L$, if it is surjective, as
\begin{equation}
    \Jac(L) = \sup_{\dim(E)=\dim(W)} \Jac(L|_E),
\end{equation}
where $E$ ranges among all subspaces of $V$ with the same dimension as $W$. We set $\Jac(L)=0$ if $L$ is not surjective. We have that, for any subset $V'\subset V$, $\Jac(L|_{V'})\leq \Jac(L)$.\newline
For an affine map $A:V\rightarrow W$, we set its Jacobian $\Jac(A)$ equal to the one of its linear part.

For any bounded set $K\subset V$, it holds that
\begin{equation}\label{Jacobian Area Formula}
  \Leb\bigl(A^{-1}(Y)\cap K\bigr)
  \le C_0 \frac{\Leb(Y)}{\Jac(A)}
  \qquad\text{for any Borel subset }Y\subset  W
\end{equation}
where $C_0$ is a constant that depends only the set $K$ and the dimensions of $V$ and $W$.

\begin{definition}
    For $0<\gamma\le1$, $\delta>0$ and $n\ge1$, we say that the family $T_\bt^n$ is
    $(\gamma,\delta)$-generic on $U\subset \Sone$ if the following holds. For any finite sequence
    $\{\ba_i\}_{i=1}^h$ in $\A^\infty$ such that $[\ba_i]_n$ are mutually distinct, for any
    $x\in U$ and for any integer $0<k<\gamma h$, we can choose a subsequence
    $\sigma=(\ba_{\xi(1)},\ldots,\ba_{\xi(k)})$ of length $k$, so that
    $G_{x,\sigma}$ is surjective and satisfies $\Jac(G_{x,\sigma})>\delta$.
\end{definition}

\begin{proposition}\label{generic families}
For any matrix $A\in GL(d,\R)$ with $\rho(A)<1$ and integers $\ell \geq 2$ and $n\geq 1$, there exist a finite family of smooth maps
$\left\{\psi_i\right\}_{i\in\mathcal{I}} \subset C^\infty(\Sone,\R^d)$,
such that for every
\(\phi\in C^2(\Sone,\R^d)\), the family $T_{\bt}^n$ 
is $\left(\frac1{n+1},\frac{1}{2}\right)-$generic on $\Sone$.
\end{proposition}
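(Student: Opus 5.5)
We follow Tsujii's construction in \cite{tsujii-01}, adapted to vector-valued perturbations. First we compute the derivative of $G_{x,\sigma}$ in the parameters. Since $\phi_\bt$ is affine in $\bt$, we have
\[
\frac{\partial}{\partial t_i}\frac{d}{dx}S(x,\ba;\bt)=\sum_{j\ge1}A^{j-1}\ell^{-j}\psi_i'\bigl(\tau^{-j}_{\ba}(x)\bigr).
\]
This expression does not depend on $\phi$ or $\bt$, so the genericity property will be uniform in $\phi\in C^2$. The dominant contribution comes from the first $n$ terms. The tail $j>n$ is bounded by $\sum_{j>n}\|A^{j-1}\|\ell^{-j}\|\psi_i'\|_\infty$, and we will make it small relative to the head by a suitable normalisation of the family.

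\textbf{Construction of the family.} Choose a fine partition of $\Sone$ into intervals $J_1,\dots,J_K$, with $K$ large compared with $\ell^n$, so that each $\ell^{-n}$-cylinder $\Pcal(\bb)$, $\bb\in\A^n$, contains several of the $J_m$ in its interior. For each $m$ and each basis vector $e_r\in\R^d$, take $\psi_{m,r}=\theta_m e_r$, where $\theta_m$ is a bump function whose derivative $\theta_m'$ equals $1$ on a middle subinterval of $J_m$. We take enough shifted copies, say two overlapping partitions, so that every point of $\Sone$ lies in the "derivative-one region'' of some bump. We also require that the bumps be supported in such small intervals that the preimages $\tau^{-j}_{\ba}(x)$ for different $j\le n$ land in disjoint bumps.

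\textbf{Selecting the subsequence.} Fix $x$, a sequence $\ba_1,\dots,\ba_h$ with distinct prefixes $[\ba_i]_n$, and an integer $k<h/(n+1)$. The points $\tau^{-n}_{\ba_i}(x)$ are then distinct and lie in distinct $n$-cylinders. We greedily select indices $\xi(1),\dots,\xi(k)$ such that the level-$n$ preimage of each selected word is not equal to any level-$j$ preimage, $j\le n$, of any other selected word. Each chosen word forbids at most $n+1$ others: its own $n$ points $\tau^{-j}(x)$, $j\le n$, can coincide with the level-$n$ point of at most $n$ other words, and this accounts for the factor $1/(n+1)$. For each selected $\ba_{\xi(p)}$, we then pick the bump $\psi_{m_p,\cdot}$ whose derivative-one region contains $\tau^{-n}_{\ba_{\xi(p)}}(x)$. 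Restricting $DG_{x,\sigma}$ to the coordinates $\{t_{m_p,r}\}_{p\le k,\,r\le d}$ gives a $dk\times dk$ matrix. After choosing the bumps narrow enough that no selected word has a lower-level preimage inside a selected bump, we order the words according to the structure above. In this ordering the matrix is block triangular, with diagonal blocks $A^{n-1}\ell^{-n}$ plus tail errors. Its determinant is therefore about $(|\det A|^{n-1}\ell^{-n})^{k}$, which is exponentially small in $k$. This is not yet enough, because $\delta=1/2$ is fixed independently of $k$.

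\textbf{Normalisation and the main obstacle.} To reach $\Jac>1/2$ we rescale the perturbations. Replacing $\psi_{m,r}$ by $c\,\psi_{m,r}$ with $c$ a large constant depending on $n$ and $A$ multiplies every diagonal block by $c$. This is permitted, since Theorem \ref{genericity} only requires a finite smooth family; the larger norms are absorbed into $D_0$. Choosing $c$ so that $c\,\ell^{-n}\sigma_{\min}(A^{n-1})\ge 2$ makes each diagonal block expand volume by at least $2^d$. We must also keep the off-diagonal and tail contributions at most a small fraction of the diagonal blocks. Granting that, the determinant of the restricted square matrix is at least $1$, and hence $\Jac(G_{x,\sigma})>1/2$.

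The main obstacle is precisely this control of the off-diagonal and tail terms, uniformly in $k$. The tail terms with $j>n$ are multiplied by the same factor $c$, so they do not become small relative to the diagonal. We would handle them by choosing the bumps with pairwise disjoint supports localized at scale much finer than $\ell^{-n}$. Then a given coordinate $t_{m,r}$ receives contributions only from words having some preimage in $J_m$. After the greedy selection, each selected word has exactly one designated point in the derivative-one region, and the remaining hits are arranged to lie strictly above or below the diagonal. If the strict triangular structure cannot be achieved, we would instead fall back on a diagonal-dominance estimate (Gershgorin-type).
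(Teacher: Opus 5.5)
\textbf{There is a genuine gap.} The step you defer (``Granting that\dots'') is the whole content of the proposition. With a single global family of bumps, a selection rule based on exact equality of points, and a scalar rescaling, it cannot be granted.

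\textbf{Where it fails.} The obstruction sits near periodic points of $\tau$. Let $x_0$ be periodic with period dividing $m$, where $1\le m<n$. Take words $\ba_r,\ba_s$ with distinct $n$-prefixes such that $\tau^m(\ba_s(x_0))=\ba_r(x_0)$, and continue $\ba_r$ so that $\tau^{-(n+m)}_{\ba_r}(x_0)=\ba_s(x_0)$. For instance, take $x_0=0$, $m=1$, $\ba_r(0)=\ell^{1-n}$, $\ba_s(0)=\ell^{-n}$.
\begin{enumerate}
\item For $x\neq x_0$ close to $x_0$, the points $\tau^{-(n-m)}_{\ba_s}(x)$ and $\ba_r(x)$ are distinct but close, and so are $\tau^{-(n+m)}_{\ba_r}(x)$ and $\ba_s(x)$. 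Your rule therefore admits both words, and it does not look at level $n+m$ at all.
\item Yet each word hits the other's designated bump. Choosing the remaining letters so that these are the only extra hits, the corresponding $2d\times 2d$ block of your restricted matrix tends, as $x\to x_0$, to
\[
c\,\ell^{-n}\begin{pmatrix}A^{n-1}&0\\0&A^{n-1}\end{pmatrix}\begin{pmatrix}I&\ell^{-m}A^{m}\\ \ell^{m}A^{-m}&I\end{pmatrix}.
\]
The Schur complement $I-\ell^{m}A^{-m}\,\ell^{-m}A^{m}$ of this limit is zero, so in the limit the map is not even surjective on these parameters.
\item No choice of $c$ helps, because the degeneracy is relative to the diagonal. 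Gershgorin is unavailable because the off-diagonal blocks are not small: the level-$(n-m)$ hit is even larger than the diagonal block.
\end{enumerate}

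\textbf{Further problems in the same vein.}
\begin{itemize}
\item Requiring preimages of different levels to fall in disjoint bumps is impossible uniformly in $x$. At $x_0=0$ with $\ba=(1,1,\dots)$, all preimages coincide.
\item With a fixed bump scale, the selection must test membership in the same support, not equality of points, and it must also see levels above $n$.
\item The count ``each chosen word forbids at most $n+1$ others'' fails in one direction. A single point $\tau^{-j}_{\bb}(x)$, $j<n$, is shared by all words with the same $j$-prefix.
\item Since $h\le\ell^n$, one has $k<\ell^n/(n+1)$, so the matrix size is bounded and smallness in $k$ is not the real issue. What is needed is a lower bound uniform in $x$ and in the infinite words.
\end{itemize}

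\textbf{The missing ingredients} are those of the paper's Tsujii-style proof.
\begin{enumerate}
\item Work near a fixed $x_0$. Use bumps $\psi_{(\ba,j)}$, $\ba\in\A^n$, supported in the $\ell^{-n}\varepsilon$-neighbourhood of $\ba(x_0)$ and with derivative $\ell^nA^{-(n-1)}\be_j$ near $\ba(x_0)$. Then the designated level-$n$ term is exactly the identity.
\item By Tsujii's combinatorial fact, select a subsequence of size at least $h/(n+1)$ that is an antichain for the order $\ba\prec\bb\iff\tau^q(\bb(x_0))=\ba(x_0)$ for some $q\ge0$. This relation records exactly the coincidences, below and above level $n$, that produce the singular configuration above.
\item Choose $\varepsilon=\varepsilon(x_0,\nu)$ so that, for incomparable words and all $y\in U(\varepsilon/3)$, no point of levels $1,\dots,n+\nu$ other than the designated one enters a selected support.
\item This gives $G_{y,\sigma}|_E=\mathrm{Id}_E$ plus a tail with entries $O(\ell^{-\nu})$, with constants depending only on $n$ and $A$. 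Since $dk$ is bounded in terms of $n$, $\Jac>1/2$ once $\nu$ is large.
\item Finally, cover $\Sone$ by finitely many such neighbourhoods and take the union of the families. This is legitimate because $\Jac(L)=\sup_E\Jac(L|_E)$ can only increase when parameters are added.
\end{enumerate}
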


\begin{proof}
The proof uses the same methods of \cite{tsujii-01}, adapting them to the higher dimensional and non-conformal setting.\newline
We first fix $x\in \Sone$ and prove the existence of a finite family $\left\{\psi_i\right\}_{i\in\mathcal{I}}$ for which $T_{\bt}^n$ 
is $\left(1/({n+1}),1/2\right)-$generic on suitable neighborhood $U_x$ of $x$, rather than on the whole $\Sone$. The proposition then follows by the compactness of $\Sone$ (see how \cite[Proposition 16]{tsujii-01} implies \cite[Proposition 15]{tsujii-01}). Moreover, we notice that the linear part of $G_{x,\sigma}$ is not affected by $\phi$, hence in the following we can assume, without loss of generality, that $\phi=0$.\newline 
We define the partial order $\prec$ on $\A^n$, setting $\ba\prec\bb$ if there exists $q\geq 0$ such that $\tau^q(\bb(x))=\ba(x)$. In \cite[proof of Lemma 16]{tsujii-01}, the following fact is proved.
\paragraph{Fact:} Given any $Q\subset \A^n$, one can extract an independent set $Q'
\subset Q, $ with respect to $\prec$, with
\begin{equation}
    |Q'| \geq \frac1{n+1} |Q|.
\end{equation}

For $\varepsilon>0$, let $U(\varepsilon)$ be the $\varepsilon$-neighborhood of
$x$ and, for $\ba\in \A^n$, let $U_\ba(\varepsilon)$ be the $\ell^{-n}\varepsilon$-neighborhood of $\ba(x)$. Let
$\nu>0$ be an integer constant that will be specified later. For this $\nu$, we fix
$\varepsilon>0$ so small that
\begin{equation}
    \ba\nprec \bb \quad \implies \quad
\tau^i(U_\bb(\varepsilon))\cap U_\ba(\varepsilon)= \emptyset,
\quad\text{ for }1\leq i\leq n+\nu.
\end{equation}
Now, for $\ba\in\A^n$ and $1\leq j \leq d$, we consider functions $\psi_{(\ba,j)}\in C^{\infty}(\Sone, \R^d)$, that satisfy:
\begin{itemize}
    \item $\supp(\psi_{(\ba,j)})\subset U_\ba(\varepsilon)$;
    \item $\frac{d}{dx}\psi_{(\ba,j)}(x) = \ell^n A^{-(n-1)} \be_j$, for $x\in U_\ba(\varepsilon/3)$;
    \item $\left|\frac{d}{dx}\psi_{(\ba,j)}(x) \right| \leq 2\ell^n \|A^{-(n-1)}\|$, for $x\in U_\ba(\varepsilon)$.
\end{itemize}

\paragraph{Claim:} For $\bt\in\R^{\A^n\times\{1,\ldots,d\}}$, the family $T_\bt^n$, corresponding to the family of functions
\begin{equation*}
    \phi_\bt = \sum_{\ba\in\A^n}\sum_{j=1}^d t_{(\ba,j)} \psi_{(\ba,j)}, \quad \text{ is $ \big(1/(n+1), 1/2\big)$-generic on }U(\varepsilon/3).
\end{equation*}

\begin{proof}[Proof of the Claim]
Consider a finite sequence $\{\ba_i\}_{i=0}^h$ in $\A^\infty$ with $[\ba_i]_n$ that are mutually distinct.\newline  
By the previous discussion we can extract a subsequence $\sigma=(\ba_{\xi(1)},\ldots,\ba_{\xi(k)})$, of length $k \geq h/(n+1)$, with $[\ba_{\xi(i)}]_n\nprec [\ba_{\xi(j)}]_n$ for $1\leq i\neq j\leq k$.\newline
We define
\begin{equation}
    E = \text{span}\left\{\be_{(\ba_{\xi{(r)}},u)} \in \R^{\A^n\times\{1,\ldots,d\}} : 1\leq r\leq k, 1\leq u \leq d\right \}.
\end{equation}
We can identify $\R^{dk}$ with $E$ by the correspondance $\be_{rd+u}\mapsto \be_{(\ba_{\xi{(r)}},u)}$, and therefore see $G_{y,\sigma}|_E$ as a map from $E$ to itself. Finally, we denote by $(G_{y,\sigma}|_E)_{v,s}^{u,r}$ its matrix elements with respect to this basis.\newline
To recover such coefficients, recalling that $\frac d{dy}S(y,\ba_{\xi(r)};\bt) = \sum_{i=1}^\infty \ell^{i-1} A^{i-1} \frac d{dx}\phi_\bt([\ba_{\xi(r)}]_i(y))$, we compute the $u$-th coordinate of the general term of this series
\begin{equation*}
    \Big(\ell^{i-1}A^{i-1}\phi_\bt([\ba_{\xi(r)}]_i(y)) \Big)_u = \sum_{s=1}^k\sum_{v=1}^{d} t_{\ba_{\xi(s)},v} \left(\ell^{i-1} A^{i-1} \frac d{dy}\psi_{\ba_{\xi(s)},v}\big([\ba_{\xi(r)}]_i(y)\big) \right)_u,
\end{equation*}
obtaining that
\begin{equation*}
    (G_{y,\sigma}|_E)_{v,s}^{u,r} = \sum_{i=1}^{\infty}\left( \ell^{i-1}A^{i-1} \frac d{dx}\psi_{\ba_{\xi(s)},v}\big([\ba_{\xi(r)}]_i(x)\big) \right)_u.
\end{equation*}
For later convenience, we set
\begin{equation*}
    (G_{y,\sigma}^{(\nu)}|_E)_{v,s}^{u,r} = \sum_{i=1}^{n+ \nu}\left( \ell^{i-1}A^{i-1} \frac d{dx}\psi_{\ba_{\xi(s)},v}\big([\ba_{\xi(r)}]_i(x)\big) \right)_u 
    \qquad \text{and} \qquad
    G_{y,\sigma}^{(\infty)}|_E = G_{y,\sigma}|_E - G_{y,\sigma}^{(\nu)}|_E.
\end{equation*}

By the choice of the $\psi_{(\ba,j)}$, $[\ba_{\xi(r)}]_i(x)$ belongs to $\supp(\psi_{(\ba_{\xi(r)},u)})$ for $i=n$, and by the choice of $\sigma$ it cannot belong to $\supp(\psi_{(\ba_{\xi(s)},v)})$ for any $(s,v)$ for any $i\leq n+\nu$. This implies that
\begin{equation*}
    G_{y,\sigma}^{(\nu)}|_E = \text{Id}_E,
\end{equation*}
For every $y\in U(\varepsilon/3)$. At the same time, we have the estimate
\begin{align*}
    (G_{y,\sigma}^{(\infty)}|_E)_{v,s}^{u,r} &\leq \sum_{i=n+ \nu+1}^{\infty}\|A^{i-1}\|\cdot 2 \ell^{n-(n+\nu)} \|A^{-n}\|\\
    &\leq \left(C \|A^{-n}\|\|A^n\|\right) \ell^{-\nu}\sum_{i=n+ \nu+1}^{\infty}\|A^{i-1}\|, 
\end{align*}
which implies that $G_{y,\sigma}^{(\infty)}|_E$ converges to zero when $\nu\rightarrow\infty$. Therefore, in the same limit, $G_{y,\sigma}|_E$ converges to $\text{Id}_E$, implying that for $\nu$ large enough, $\Jac(G_{y,\sigma}|_E)>1/2$.
\end{proof}
This concludes the proof of the proposition.
\end{proof}

\subsection{Proof of Theorem \ref{genericity}}

We follow closely the approach of \cite{avila-gouezel-tsujii-06}.\newline
Consider a fixed $A\in GL_d(\R)$, $\ell\geq2$ and $\beta>0$, as in the statement of Theorem \ref{genericity}.\newline
Fix an integer $N_0$, whose size will be specified at the end of the proof and integers $d_0$ and $n_0$, satisfying
\begin{equation*}
    d_0/(n_0+1)>N_0+1,
  \qquad (d_0+1)\exp(-\beta n_0/2)<1/2.
\end{equation*}
We apply Proposition \ref{generic families} to $A$, $\ell$ and $n=n_0$, obtaining the functions $\psi_1,\dots,\psi_m\in C^\infty(\Sone,\R^d)$. For $\phi\in C^k(\Sone,\R^d)$, we consider the corresponding family of functions $\phi_\bt$ and maps $T_\bt^{n_0}$, for $\bt$ ranging in $[-1,1]^m$. We have that $T_\bt^{n_0}$ is $(1/(n_0+1),1/2)$-generic.\newline
We set $D_0=\sum_{i=1}^m\|\psi_i\|_{C^k}$, so that $\phi\in \mathcal{U}_{\kappa-D_0}$ implies that $\phi_\bt\in \mathcal{U}_\kappa$ for any $\bt\in[-1,1]^m$ and $\kappa> D_0$. We also pick
$p(q)$ such that $\ell^{-p(q)}\le\ell^{-q}\left\|A^{T,-q}\right\|^{-1}$. For a finite word $\bc$, let $x_\bc$ be the left endpoint of $\mathcal{P}^*(\bc)$. Finally, we define the thicker slab $\widetilde{E}_q(\be,r)=\left\{y \in \R^d : |\be \cdot y-r| < 4 \alpha \ell^{-q}\left|A^{T,q}\be\right|\right\}$.

\begin{lemma}
\label{lemma:genericity1}
    If $\phi_\bt\in\mathcal{E}(\beta,\kappa, A)$, we can take an arbitrarily large integer $q$, for which there
    exist $d_0$ words $\ba_i$, $1\le i\le d_0$, in $\A^q$, a word
    $\bc\in\A^{p(q)}$ and a slab $\widetilde{E}_q(\be,r)$ satisfying
    \begin{enumerate}
        \item\label{item:E1} $\frac{d}{dx}S_\bc(x_\bc,\ba_i;\bt)\in \widetilde{E}_q(\be,r)$, for any $1\leq i\leq d_0$;
\item\label{item:E2}$[\ba_i]_{n_0}\ne[\ba_j]_{n_0}$, if $i\neq j$.
    \end{enumerate}
\end{lemma}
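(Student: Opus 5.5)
Since $\phi_\bt\in\mathcal{E}(\beta,\kappa,A)$, we have $\limsup_q \frac1q\log M(q)\ge\beta$, so for arbitrarily large $q$ we get $M(q)\ge e^{\beta q/2}$ (any fixed fraction of $\beta$ suffices). Since $M(q)=\inf_p M(p,q)$, this gives in particular $M(p(q),q)\ge e^{\beta q/2}$. By the definition of $M(p,q)$ (in the slab formulation of the Remark), we can then choose $\bc\in\A^{p(q)}$, a unit vector $\be$ and $r\in\R$ such that at least $e^{\beta q/2}$ words $\ba\in\A^q$ satisfy $S'_\bc(x_\ba,\ba;\bt)\in E_q(\be,r)$ for some point $x_\ba\in\Pcal_*(\bc)$. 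The sup over $(\xi,\eta)$ may not be attained, so we take a configuration achieving at least, say, half the value; this only changes constants.

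The first step is to move the points $x_\ba$ to the common point $x_\bc$. The bound on second derivatives recorded in the section on bounds on derivatives gives $\|S''_\bc\|\le\alpha\ell^{-2}$; in fact the useful Lipschitz constant is $\alpha$ up to harmless factors. Since $|\Pcal_*(\bc)|\le 3\ell^{-p(q)}$, we get
\[
|\be\cdot S'_\bc(x_\ba,\ba)-\be\cdot S'_\bc(x_\bc,\ba)|\le C\alpha\ell^{-p(q)}\le C\alpha\ell^{-q}\|(A^{T})^{-q}\|^{-1}.
\]
We then want this to be at most $2\alpha\ell^{-q}|A^{T,q}\be|$. This holds because $|A^{T,q}\be|\ge\|(A^T)^{-q}\|^{-1}$, which is exactly why $p(q)$ was chosen that way. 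The constant $C$ is absorbed by enlarging $p(q)$ by a fixed amount, or by the precise derivative bound. So every such $\ba$ satisfies $S'_\bc(x_\bc,\ba;\bt)\in\widetilde E_q(\be,r)$, which is item \ref{item:E1}.

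Next, we obtain item \ref{item:E2} by pigeonholing on $n_0$-prefixes. There are only $\ell^{n_0}$ possible prefixes $[\ba]_{n_0}$. One prefix class would not suffice, so we argue as follows. The bad words number at least $e^{\beta q/2}$, and each prefix class contributes some number of them. If every class contains at most $d_0$ of them… this gives the wrong direction. Instead we need at least $d_0$ \emph{distinct} prefixes among the bad words. Bounding the number of distinct prefixes from below requires a multiscale argument. Following \cite{avila-gouezel-tsujii-06}, we use submultiplicativity. If only fewer than $d_0$ distinct $n_0$-prefixes appear, then the $q$-words decompose as prefix plus tail. The tail words, viewed from the point $\ba(x)$-level, form a bad set for the shifted problem with word length $q-n_0$ and a slab rescaled by $A^{n_0}$ and $\ell^{n_0}$. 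Iterating along blocks of length $n_0$ would give $M(q)\le C d_0^{q/n_0}$. But $(d_0+1)e^{-\beta n_0/2}<1/2$ forces $d_0^{q/n_0}\ll e^{\beta q/2}$, which is a contradiction for large $q$.

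This iteration is the main obstacle. One must check that stripping an $n_0$-prefix maps the slab condition for $\ba$ at scale $q$ into a slab condition for the suffix at scale $q-n_0$ with comparable constants. That follows from the identity $S(x,\ba)=S_{n_0}(x,[\ba]_{n_0})+A^{n_0}S(\tau^{-n_0}_\ba x,\sigma^{n_0}\ba)$ and the chain rule factor $\ell^{-n_0}$. Comparable constants are needed so that the widths $2\alpha$ and $4\alpha$ do not blow up across iterations. I would handle this by choosing the prefix with the most bad continuations at each stage and tracking the width through $A^{T,n_0}$. If this becomes too lossy, I would replace the iteration by choosing a possibly larger $q$ along the $\limsup$ sequence, so that a single level of the pigeonhole suffices.
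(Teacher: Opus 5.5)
\paragraph{Gap in the third step.} Your first two steps are fine. $M(p(q),q)\ge M(q)$ does give a large configuration at scale $q$. Moving the points to $x_\bc$ costs at most $\tfrac34\alpha\ell^{-q}\|A^{T,q}\be\|$, by the second-derivative bound and the choice of $p(q)$, so item 1 holds for that configuration.

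The gap is in the third step, where you try to obtain item 2 for the \emph{same} $q$ and the \emph{same} set of bad words. This cannot be done in general. Nothing prevents all bad words of the chosen configuration from sharing their first $n_0$ letters, with all the branching occurring deeper in the tree. That is fully compatible with $M(q)\ge e^{\beta q/2}$: the tails simply form a large bad set at scale $q-n_0$.

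For the same reason your contradiction argument does not close. The hypothesis \emph{fewer than $d_0$ distinct $n_0$-prefixes} concerns only the top level of one configuration. After stripping the prefix you know nothing about how the tails branch, so there is nothing to iterate to reach $M(q)\le Cd_0^{q/n_0}$. Passing to a larger $q$ along the $\limsup$ sequence does not help either.

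\paragraph{The missing idea.} The conclusion has to be produced at a \emph{different} scale, found by descending the prefix tree of one bad set. Your instinct of following the prefix with the most bad continuations is right, but it must be used to \emph{locate} that scale, with a quantitative stopping rule.

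In the paper one starts from $E\subset\A^{\tilde q}$ with $\#E\ge e^{\beta\tilde q}$. Let $\nu(j)$ be the largest number of words of $E$ sharing a prefix of length $jn_0$. Since $\nu(0)=\#E\ge e^{\beta\tilde q}$ while $\nu(j)\le\ell^{\tilde q-jn_0}$, there is a first $j_*$ with $\nu(j_*+1)<e^{-\beta n_0/2}\nu(j_*)$. Set $q:=\tilde q-n_0j_*$. Minimality gives $\nu(j_*)\ge e^{\beta\tilde q-\beta n_0j_*/2}\ge e^{\beta q}$. Compared with $\nu(j_*)\le\ell^q$, this yields $q\ge\beta\tilde q/(2\log\ell)$, so $q$ is still large. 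The condition $(d_0+1)e^{-\beta n_0/2}<1/2$ then forces the maximal class, with common prefix $\bb$ of length $m=\tilde q-q$, to split into at least $d_0+1$ classes according to the next $n_0$ letters. The suffixes $\ba_i\in\A^q$ of representatives of these classes satisfy item 2.

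A single rescaling finishes the job, via $\frac{d}{dx}S(x,\bb\ba)=\frac{d}{dx}S_m(x,\bb)+\ell^{-m}A^m\frac{d}{dx}S(\tau_\bb^{-m}x,\ba)$ and $\be=A^{T,m}\be_1/\|A^{T,m}\be_1\|$. It turns the width $2\alpha\ell^{-\tilde q}\|A^{T,\tilde q}\be_1\|$ \emph{exactly} into $2\alpha\ell^{-q}\|A^{T,q}\be\|$. So the accumulation of widths you worry about never arises. Your step 2 is then performed once, at the end, at the new scale with $\bc\in\A^{p(q)}$.

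This descent needs $\#E\ge e^{\beta'\tilde q}$ with $\beta'>\beta/2$. The exponent $\beta/2$ in your first step is therefore slightly too weak.

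\paragraph{Rescuing your contradiction route.} Alternatively, negate the \emph{whole} lemma: assume fewer than $d_0$ distinct $n_0$-prefixes for every configuration at every scale $q\ge Q_0$. Combine the exact rescaling with covering the slab widened by the base-point shift by two slabs of type $\widetilde E_{q-n_0}$. This gives a recursion with factor $2(d_0-1)$ per block of length $n_0$, hence $M(q)\le C(2d_0)^{q/n_0}$ with $(2d_0)^{1/n_0}<e^{\beta/2}$. That contradicts $\phi_\bt\in\mathcal E(\beta,\kappa,A)$.
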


\begin{proof}
By assumption, we can take an arbitrarily large $\tilde q$ such that there exist $\bc_1\in\A^{p(\tilde q)}$, a slab $E_{\tilde q}(\be_1,r_1)$ and a set of words $E\subset\A^{\tilde q}$ such that $\#E\ge\exp(\beta\tilde q)$ and
\begin{equation*}
  \exists x_\ba\in P^*(\bc)\ \text{s.t.}\
 \frac{d}{dx}S_{\bc_1}(x_\ba,\ba,\bt) \in E_{\tilde{q}}(\be_1,r_1)
  \qquad\text{for any }\ba\text{ in }E.
\end{equation*}
For each $0\le j\le[\tilde q/n_0]$, we introduce an equivalence relation $\sim_j$ on $E$
such that $\ba\sim_j \bb$ if and only if $[\ba]_{jn_0}=[\bb]_{jn_0}$, and let
\[
  \nu(j)=\max_{\ba\in E}\#\{\bb\in E\mid \bb\sim_j \ba\}.
\]
Since $\nu(0)=\#E\ge\exp(\beta\tilde q)$ while $\nu(j)\le\ell^{\tilde q-jn_0}$ obviously,
there exists $0\le j\le[\tilde q/n_0]$ such that
$\nu(j+1)<\exp(-\beta n_0/2)\nu(j)$. Let $j_*$ be the minimum of such integers $j$ and
put $q=\tilde q-n_0j_*$. Then we have $\nu(j_*)\ge\exp(\beta q)$ and
$q\ge\beta\tilde q/(2\log\ell)$. The equivalence class $H$ w.r.t. $\sim_{j_*}$ of maximum
cardinality contains at least $(d_0+1)$ non-empty equivalence classes w.r.t.
$\sim_{j_*+1}$, because
\[
  \nu(j_*)-(d_0+1)\nu(j_*+1)
  >\nu(j_*)-(d_0+1)\exp(-\beta n_0/2)\nu(j_*)>0.
\]
Thus, we can take $\bb\in\A^{\tilde q-q}$ and $\ba_i\in\A^q$, $1\le i\le d_0$,
such that $\bb\ba_i\in H$ for $1\le i\le d_0$ and that \emph{\ref{item:E2}.} holds. Put $x_{\bb\ba_i}=\bb(x_{\ba_i})$, and notice that $x_{\bb\ba_i}\in \mathcal{P}^*(\bb\bc_1)$, for $1
\leq i \leq d_0$. It follows that
\begin{equation}
  \frac{d}{dx}S_{\bb\bc_1}(x_{\bb\ba_i},\ba_i,\bt) \in E_q(\be,r)
  \qquad\text{for }1\le i\le d_0,
\end{equation}
where $\be= A^{T,\tilde q - q}\be_1/\|A^{T,\tilde q - q}\be_1\|$ and $r= \ell^{\tilde q - q} r_1/\|A^{T,\tilde q - q}\be_1\|$. Take $\bc\in\A^{p(q)}$ such that $\Pcal^*(\bb\bc_1)\subset\Pcal^*(\bc)$. Since the distance between $x_\bc$ and
$x_{\bb\ba_i}$ is bounded by 
$\ell^{-p(q)}\le\ell^{-q}\left\|A^{T,-q}\right\|^{-1}$,
the condition \emph{\ref{item:E1}.} follows from the bounds on the second derivatives of $S_\bc(x,\ba,\bt)$ valid when $\phi_\bt\in\mathcal{U}_\kappa$.    
\end{proof}

We define the further thickened slabs $\bar E_q(\be, r) = \left\{y \in \R^d : |\be \cdot y-r| < 8 \alpha \ell^{-q}\left\|A^{T,q}\right\|\right\}$.

\begin{lemma}
\label{lemma:genericity2}
    There exists a constant $C_{\kappa,d}>0$ such that, for every $q>0$, there exists a finite collection $\mathcal{F}_q$ of slabs of the form $\bar E_q(\be, r)$, satisfying the following property. One has
    \begin{equation*}
        \#\mathcal{F}_q\leq C_{\kappa,d} \ell^{dq} \|A^{T,q}\|^{-d}
    \end{equation*}
    and every slab $\widetilde{E}_q(\be,r)$, that intersects $B(0,\alpha)$, is contained in one of the elements of $\mathcal{F}_q$.
\end{lemma}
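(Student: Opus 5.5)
The plan is a direct net (covering) argument in the parameter space $(\be,r)\in\mathbb S^{d-1}\times\R$ of the slabs. Set
\[
\delta_q:=c\,\ell^{-q}\norm{A^{T,q}},
\]
with $c>0$ a small constant depending only on $\alpha$ and $d$, to be fixed below.

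Two observations come first. The width of $\widetilde E_q(\be,r)$ is $4\alpha\ell^{-q}|A^{T,q}\be|\le 4\alpha\ell^{-q}\norm{A^{T,q}}$. This is half the width of any $\bar E_q(\be',r')$, so there is a margin of $4\alpha\ell^{-q}\norm{A^{T,q}}$ with which to absorb a small change of direction and position. Second, two infinite slabs with different normals are never nested. The containment must therefore be understood inside the bounded region where it is used, namely $B(0,\alpha)$. This suffices for the later application, because all the derivatives $S'_\bc(x,\ba)$ lie in $B(0,\alpha)$ by the a priori bounds on $S_\bc$. I will state and prove it in the form $\widetilde E_q(\be,r)\cap B(0,\alpha)\subset \bar E_q(\be',r')$.

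For the construction, fix a $\delta_q$-net $\mathcal N_q$ of $\mathbb S^{d-1}$ with $\#\mathcal N_q\le C_d\delta_q^{-(d-1)}$. If $\widetilde E_q(\be,r)$ meets $B(0,\alpha)$, then $|r|\le \alpha+4\alpha\ell^{-q}\norm{A^{T,q}}\le C'\alpha$. Here I use that $\rho(A)<1$ gives $\sup_q\norm{A^{T,q}}<\infty$. Take a $\delta_q$-net $\mathcal R_q$ of $[-C'\alpha,C'\alpha]$, so that $\#\mathcal R_q\le C\alpha\delta_q^{-1}+1$. Define $\mathcal F_q:=\{\bar E_q(\be',r'):\be'\in\mathcal N_q,\ r'\in\mathcal R_q\}$. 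Then
\[
\#\mathcal F_q\le C_{\kappa,d}\,\delta_q^{-d}= C_{\kappa,d}\,\ell^{dq}\norm{A^{T,q}}^{-d}.
\]
The additive $+1$ terms are harmless. Indeed, $\ell^{-q}\norm{A^{T,q}}$ is bounded above uniformly in $q$, so $\delta_q^{-1}$ is bounded below and the $+1$ can be absorbed into the constant. The constant depends on $\kappa$ only through $\alpha$.

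For the containment, let $y\in \widetilde E_q(\be,r)\cap B(0,\alpha)$ and choose $\be'\in\mathcal N_q$, $r'\in\mathcal R_q$ with $|\be-\be'|\le\delta_q$ and $|r-r'|\le\delta_q$. Then
\[
|\be'\cdot y-r'|\le|\be\cdot y-r|+|\be-\be'|\,|y|+|r-r'|<4\alpha\ell^{-q}\norm{A^{T,q}}+(\alpha+1)\delta_q .
\]
Choosing $c\le 4\alpha/(\alpha+1)$ makes the right-hand side at most $8\alpha\ell^{-q}\norm{A^{T,q}}$, so $y\in\bar E_q(\be',r')$.

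The only delicate point is the one flagged above: literal containment of infinite slabs fails for non-parallel normals, so the statement has to be read relative to $B(0,\alpha)$. I would check that Lemma~\ref{lemma:genericity1} only uses membership of vectors of norm at most $\alpha$, so that this reading is all that is needed downstream. The remaining steps are routine counting.
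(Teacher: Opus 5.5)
Your proposal is correct and follows the same route as the paper. Both arguments take a net in the parameter space $(\be,r)\in\mathbb S^{d-1}\times\R$ at scale comparable to $\ell^{-q}\norm{A^{T,q}}$ (the paper uses spacing $\min\{1/\sqrt d,2\alpha\}\,\ell^{-q}\norm{A^{T,q}}$ over the bounded set of admissible parameters), and use the gap between the half-widths $4\alpha\ell^{-q}\norm{A^{T,q}}$ and $8\alpha\ell^{-q}\norm{A^{T,q}}$ to absorb the perturbation of direction and position.

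Your caveat is also right. The paper asserts the literal containment $\widetilde E_q(\be_1,r_1)\subset\bar E_q(\be_2,r_2)$, which fails for non-parallel normals. Only containment within $B(0,\alpha)$ is ever used, since the subsequent union bound is taken over $G_{x_\bc,\sigma}^{-1}\bigl((E\cap B(0,\alpha))^{N_0}\bigr)$.
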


\begin{proof}
    Set $d\big((\be_1,r_1), (\be_2,r_2)\big)= |\be_1-\be_2| + |r_1-r_2|$. Setting $B_{\kappa,d}=\min\{1/\sqrt d, 2\alpha\}$ it is easy to verify that if $d\big((\be_1,r_1), (\be_2,r_2)\big)\leq B_{\kappa,d} \ell^{-q}\|A^{T,q}\|$ then $\widetilde E_q(\be_1, r_1)\subset \bar E_q(\be_2, r_2)$. Furthermore, the set $K\subset S^{d-1}\times\R$ of $(\be,r)$ for which $\widetilde E_q(\be, r)\cap B(0,\alpha)\neq\emptyset$ for any $q>0$ is compact. We now pick a minimal $(B_{\kappa,d} \ell^{-q}\|A^{T,q}\|)$-net in $K$ as the coordinates of the slabs of $\mathcal{F}_q$.
\end{proof}

Combining Lemma \ref{lemma:genericity1}, \ref{lemma:genericity2} and Proposition \ref{generic families}, we obtain that, for any $\bt\in[-1,1]^m$ for which $\phi_\bt\in \mathcal{E}(\beta,\kappa, A)$, there exist an arbitarly large $q>0$, a sequence of words $\sigma=(\ba_1,\ldots,\ba_{N_0})$ in $\A^q$, a word $\bc\in\A^{p(q)}$ and a slab $E \in \mathcal{F}$, such that 
\begin{equation*}
    G_{x_\bc,\sigma}(\bt)\in E^{N_0} \qquad \text{ and } \qquad \Jac(G_{x_\bc,\sigma})>\frac12.
\end{equation*}

Note that by the derivative bounds for $\phi\in\mathcal{U}_{\kappa}$ we have that $G_{x_\bc,\sigma}(\bt)\in B(0,\alpha)^N$. We set $Y(\sigma,\bc, E)= G_{x_\bc,\sigma}^{-1}\Big(\big(E\cap B(0,\alpha)\big)^{N_0}\Big)$ and
\begin{equation*}
    Y(q) = \bigcup_{\sigma\in \A^{qN_0}} \bigcup_{\bc\in\A^{p(q)}}\bigcup_{E\in\mathcal{F}} Y(\sigma,\bc, E).
\end{equation*}
Performing a union bound, and recalling \ref{Jacobian Area Formula}, we obtain that
\begin{equation*}
    \Leb(Y(q)) \leq C \cdot \ell^{qN_0}\cdot\ell^{p(q)}\cdot \ell^{dq}\|A^{T,q}\|^{-d} \cdot \ell^{-qN_0}\left\|A^{T,q}\right\|^{N_0}.
\end{equation*}
which, for $N_0$ sufficiently large (indipendently from $q$), converges exponentially fast to $0$ in $q$, providing the desired conclusion by Borel Cantelli lemma.

\printbibliography
\addcontentsline{toc}{section}{Bibliography}

\end{document}